\subjclass[2000]{57R30, 22E25, 53C30}    
\keywords{Lie algebra, nilpotent, filiform, totally geodesic foliation}
\def\im{\operatorname{im}}
\def\ad{\operatorname{ad}}
\def\pip{\operatorname{\pi_\perp}}
\def\tr{\operatorname{tr}}
\def\Span{\operatorname{Span}}      
\def\End{\operatorname{End}}        
\def\sq{\subseteq}
\def\a{\mathfrak a}
\def\b{\mathfrak b}
\def\g{\mathfrak g}
\def\h{\mathfrak h}
\def\hp{{\mathfrak h}^{\perp}}
\def\m{\mathfrak m}
\def\p{\mathfrak p}
\def\r{\mathfrak r}
\def\s{\mathfrak s}
\def\t{\mathfrak t}
\def\z{\mathfrak z}
\def\N{\mathbb N}
\def\R{\mathbb R}
\def\0{\mathbb 0}
\def\<{\langle}                     
\def\>{\rangle}
\theoremstyle{plain}
\newtheorem{theorem}{Theorem}[section]
\newtheorem{proposition}[theorem]{Proposition}
\newtheorem{lemma}[theorem]{Lemma}
\newtheorem*{theorem*}{Theorem}
\theoremstyle{definition}
\newtheorem{Definition}[theorem]{Definition}
\newtheorem{remark}[theorem]{Remark}
\title{Totally Geodesic Subalgebras of Nilpotent Lie algebras}
\author[Cairns, Hini\'c Gali\'c and  Nikolayevsky]{Grant Cairns $^1$, Ana Hini\'c Gali\'c $^2$
 and Yuri Nikolayevsky $^3$}
\address{$^1$
Department of Mathematics and Statistics\\
 La Trobe University\\
 Melbourne,  3086\\
 Australia}
\email{G.Cairns@latrobe.edu.au}
\address{$^2$
Department of Mathematics and Statistics\\
 La Trobe University\\
 Melbourne,  3086\\
 Australia}
\email{A.HinicGalic@latrobe.edu.au}
\address{$^3$
Department of Mathematics and Statistics\\
 La Trobe University\\
Melbourne,  3086\\
Australia}
\email{Y.Nikolayevsky@latrobe.edu.au}
\begin{document}

\maketitle

\begin{abstract}
A metric Lie algebra $\g$ is a Lie algebra equipped with an inner product. A subalgebra $\h$ of a metric Lie algebra $\g$   is said to be totally geodesic if the Lie subgroup corresponding to $\h$ is a totally geodesic submanifold relative to the left-invariant Riemannian metric defined by the  inner product, on the simply connected Lie group associated to $\g$. A nonzero element of $\g$ is called a geodesic if it spans a one-dimensional totally geodesic subalgebra. We give a new proof of Ka{\u\i}zer's theorem  that every metric Lie algebra possesses a geodesic. For nilpotent Lie algebras, we give several results on the possible dimensions of totally geodesic subalgebras.  We give an example of a codimension two totally geodesic subalgebra of the standard filiform nilpotent Lie algebra, equipped with a certain inner product. We prove that no other filiform Lie algebra possesses such a subalgebra. We show that in filiform nilpotent Lie algebras, totally geodesic subalgebras that leave invariant their orthogonal complements have dimension  at most half the dimension of the algebra. We give an example of a 6-dimensional filiform nilpotent Lie algebra that has no totally geodesic subalgebra of dimension $>2$, for any choice of inner product.
\end{abstract}

\section{Introduction}
Milnor's classic paper \cite{Mi} showed that the Riemannian geometry of Lie groups can be fruitfully investigated by working in the level of their Lie algebras. A number of recent papers have pursued this theme; see \cite{HL,Ch,KN,K}.  Our paper is  motivated by the work of Kerr and Payne \cite{KP}.

Consider a Lie group $G$ with finite dimensional, real, Lie algebra $\g$. Choose an inner product  $\langle \cdot,\cdot\rangle$ on $\g$ and consider the resulting  left-invariant Riemannian metric on $G$. Now let $\h$ be a subalgebra of $\g$ and consider the corresponding connected subgroup $H$ of $G$. One says that $\h$ is a {\em totally geodesic subalgebra} of $\g$ if $H$ is a totally geodesic submanifold of $G$. The interest of this notion is that the left cosets of $H$ define a totally geodesic foliation on $G$; see \cite{Mc,Gh,C1,C2,CG}.
There is a purely algebraic formulation of totally geodesic subalgebra condition, as we will now recall. A Lie algebra equipped with an inner product is called a {\em metric} Lie algebra. If $\g$ is a metric Lie algebra, with  inner product $\langle \cdot,\cdot\rangle$, the Levi-Civita connection $\nabla$ on $\g$  is given by the formula for left invariant vector fields (see \cite{KoNo}): for all $ X,Y,Z\in\g$,
\begin{equation} \label{e:Levi-Civita}
2\langle\nabla_XY,Z\rangle= \langle [X,Y]
,Z\rangle+\langle[Z,X] ,Y\rangle+\langle [Z,Y]
,X\rangle.
\end{equation}

\begin{Definition}
 A subalgebra $\h$ of  a metric Lie algebra $\g$ is said to be {\em totally geodesic}  if  $\nabla_Y Z\in\h$ for all $Y,Z\in \h$.
\end{Definition}

We are primarily interested in the nilpotent case, but let us first give some general results from which the nilpotent case can be better appreciated.
Firstly, recall the following well known equivalence.

\begin{lemma}\label{L:basic}
Let $\h$ be a subalgebra $\h$ of a metric Lie algebra  $\g$ and let $\hp$ denote the orthogonal complement of $\h$ in $\g$. Consider the linear map $\phi: \h^{\perp}\to \End(\h)$ defined by
$ \phi(X)(Y)=\pi_\h [X,Y]$ for all $X\in\hp, Y\in \h$, where $\pi_\h :\g\to\h$ denotes the orthogonal projection. Then $\h$ is a totally geodesic subalgebra of $\g$ if and only if the image of $\phi$ lies in the orthogonal Lie algebra $\mathfrak{so}(\h)$; that is, explicitly, if
\begin{equation}\label{L:TotGeod-cond}
\langle [X,Y],Z\rangle +\langle [X,Z],Y\rangle =0, \ \text{for all}\ X\in\hp, Y,Z\in \h.
\end{equation}
\end{lemma}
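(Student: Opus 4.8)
The plan is to unwind the definition of totally geodesic directly from the Levi-Civita formula \eqref{e:Levi-Civita}. By definition $\h$ is totally geodesic if and only if $\nabla_YZ\in\h$ for all $Y,Z\in\h$, and since $\g=\h\oplus\hp$ orthogonally, this happens exactly when $\langle\nabla_YZ,X\rangle=0$ for every $X\in\hp$ and all $Y,Z\in\h$. So everything reduces to computing $\langle\nabla_YZ,X\rangle$ for $X\in\hp$ and $Y,Z\in\h$.

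Applying \eqref{e:Levi-Civita} with the roles $X\mapsto Y$, $Y\mapsto Z$, $Z\mapsto X$ gives
\[
2\langle\nabla_YZ,X\rangle=\langle[Y,Z],X\rangle+\langle[X,Y],Z\rangle+\langle[X,Z],Y\rangle .
\]
Here the key point --- and the only place the hypothesis that $\h$ is a subalgebra enters --- is that $[Y,Z]\in\h$, so $\langle[Y,Z],X\rangle=0$ because $X\in\hp$. Hence $\langle\nabla_YZ,X\rangle=0$ for all such $X,Y,Z$ if and only if $\langle[X,Y],Z\rangle+\langle[X,Z],Y\rangle=0$ for all $X\in\hp$, $Y,Z\in\h$, which is precisely \eqref{L:TotGeod-cond}.

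It remains to identify \eqref{L:TotGeod-cond} with the condition $\im\phi\sq\mathfrak{so}(\h)$. For $X\in\hp$ and $Y,Z\in\h$, orthogonality of the projection $\pi_\h$ gives $\langle[X,Y],Z\rangle=\langle\pi_\h[X,Y],Z\rangle=\langle\phi(X)Y,Z\rangle$ and, symmetrically, $\langle[X,Z],Y\rangle=\langle\phi(X)Z,Y\rangle$. Thus \eqref{L:TotGeod-cond} says exactly that $\langle\phi(X)Y,Z\rangle+\langle\phi(X)Z,Y\rangle=0$ for all $Y,Z\in\h$, i.e.\ that the endomorphism $\phi(X)$ of $\h$ is skew-symmetric for the restricted inner product, which is the statement $\phi(X)\in\mathfrak{so}(\h)$; as $X\in\hp$ is arbitrary, this is $\im\phi\sq\mathfrak{so}(\h)$. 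I do not expect any genuine obstacle here: the whole argument is a one-line application of the Levi-Civita formula together with the orthogonal splitting $\g=\h\oplus\hp$, the only thing to watch being that the subalgebra hypothesis is exactly what is needed to discard the term $\langle[Y,Z],X\rangle$.
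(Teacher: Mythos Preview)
Your proof is correct and follows exactly the paper's approach: apply the Levi--Civita formula \eqref{e:Levi-Civita} to $2\langle\nabla_YZ,X\rangle$ and use the subalgebra hypothesis to kill the $\langle[Y,Z],X\rangle$ term. The only difference is that you also spell out the (trivial) identification of \eqref{L:TotGeod-cond} with $\im\phi\subseteq\mathfrak{so}(\h)$, which the paper leaves implicit.
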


Notice that the condition for $\h$ to be totally geodesic depends only on the choice of orthogonal complement $\hp$, and on the inner product on $\h$; it is not affected by the choice of inner product on $\hp$.
Notice also that by the above lemma, $\h$ is  totally geodesic if $\phi\equiv 0$, that is, if the following condition is satisfied:

\begin{Definition} \label{D:inv}
If $\h$ is a subalgebra of a metric Lie algebra $\g$, we say that $\hp$ is {\em $\h$-invariant} if $[X,Y]\in \hp$, for all $X\in\hp,Y\in \h$.
\end{Definition}

Notice that given $\h$, the  $\h$-invariance of the orthogonal complement $\hp$ does not depend on the choice of inner product on $\h$ or on $\hp$.
Of course, $\hp$ is not $\h$-invariant for all totally geodesic subalgebras. For example, consider the solvable Lie algebra with orthonormal basis $\{X,Y,Z\}$ and relations $[X,Y]=Z, [X,Z]=-Y, [Y,Z]=0$. The subalgebra generated by $Y,Z$ is  totally geodesic, by the above lemma, but $\hp$ is not $\h$-invariant.
Nevertheless, $\h$-invariance is often useful and gives natural examples of totally geodesic subalgebras. For example, let $\g$ be a  Lie algebra, let $\r$ denote its radical and let $\s$ be a Levi subalgebra. So $\r$ is a solvable ideal, $\s$ is semisimple and $\g$ is a semidirect product: $\g=\s\ltimes \r$. Choose an inner product on $\g$ for which $\r$ is orthogonal to $\s$. Then as $\r$ is an ideal, $\r$ is $\s$-invariant, and so $\s$ is totally geodesic. Similarly, if $\g$ is a semisimple Lie algebra, consider a Cartan decomposition $\g=\t +\p$, where $[\t,\t]\subseteq \t, [\t,\p]\subseteq \p$ and $[\p,\p]\subseteq \t$; for example, see \cite{He} or \cite{Vi}. Choose an inner product on $\g$ for which $\t$ is orthogonal to $\p$.  Since $[\t,\p]\subseteq \p$, we have that $\p$ is $\t$-invariant,  and so $\t$ is totally geodesic.

If $Y$ is a nonzero element of a metric Lie algebra $\g$ and $\Span( Y) $ is totally geodesic, then by abuse of language, we say that $Y$ is a {\em geodesic}. We remark that some authors insist further that $Y$ have unit length; we do not impose this restriction.
Geodesics are also known as  {\em homogeneous geodesics}, to distinguish them from general geodesics on the underlying Lie group.
Extensions of this concept to homogeneous spaces have been examined in several papers; see
\cite{KS,KSE,AA}.

\begin{remark}\label{R:geo}
From Lemma \ref{L:basic}, a nonzero element $Y$ is a geodesic if and only if\break $\langle\nabla_Y Y,X\rangle=0$ for every $X\in\Span(Y)^\perp$.  But from (\ref{e:Levi-Civita}), $\langle\nabla_Y Y,Y\rangle=0$. Thus  $Y$ is a geodesic if and only if $\nabla_Y Y=0$. Moreover, from (\ref{e:Levi-Civita}), $\nabla_Y Y=0$ if and only if $\langle [X,Y],Y\rangle=0$ for all $X\in\g$. In other words, $Y$ is a geodesic if and only if $\hp$ is $\h$-invariant, where $\h=\Span( Y) $. \end{remark}

The following lemma is well known;  in each case, the orthogonal complement of the subalgebra is  obviously invariant  in the sense of Definition \ref{D:inv}.

\begin{lemma}\label{L:center}
Let $\g$ be a metric Lie algebra. Then
\begin{enumerate}[\rm (a)]
\item Every vector subspace of the centre $\z(\g)$ of $\g$ is a totally geodesic subalgebra of $\g$.
\item Every subalgebra that is orthogonal to the derived algebra $[\g,\g]$ of $\g$ is a totally geodesic subalgebra of $\g$.
\end{enumerate}
\end{lemma}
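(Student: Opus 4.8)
The plan is to reduce both parts to the $\h$-invariance criterion of Definition \ref{D:inv}, which by Lemma \ref{L:basic} (taking $\phi\equiv 0$) immediately implies that the subalgebra in question is totally geodesic. So in each case the work is just to check, first, that the given subspace is actually a subalgebra, and second, that its orthogonal complement $\hp$ is stable under bracketing with $\h$.

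For part (a), let $\h$ be any vector subspace of the centre $\z(\g)$. Since $[\h,\h]\sq[\z(\g),\z(\g)]=0\sq\h$, the subspace $\h$ is a subalgebra. Now take $X\in\hp$ and $Y\in\h$; because $Y$ is central, $[X,Y]=0\in\hp$, so $\hp$ is $\h$-invariant, and Lemma \ref{L:basic} applies. (Equivalently, one can invoke Remark \ref{R:geo} element-wise: every $Y\in\z(\g)$ satisfies $\langle[X,Y],Y\rangle=0$ for all $X\in\g$, hence is a geodesic, and a subspace all of whose vectors are geodesics is totally geodesic — but the $\h$-invariance argument is cleaner.)

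For part (b), let $\h$ be a subalgebra with $\h\perp[\g,\g]$; being a subalgebra is assumed, so nothing is needed there. The hypothesis $\h\perp[\g,\g]$ means exactly that $[\g,\g]\sq\hp$. Hence for any $X\in\hp$ and $Y\in\h$ we have $[X,Y]\in[\g,\g]\sq\hp$, so again $\hp$ is $\h$-invariant and Lemma \ref{L:basic} gives the conclusion.

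There is no real obstacle here: both statements are one-line verifications of Definition \ref{D:inv}, and the only thing to be careful about is the direction of the orthogonality hypothesis in (b) (that ``$\h$ orthogonal to $[\g,\g]$'' forces $[\g,\g]$, and in particular every bracket $[X,Y]$ with $X\in\hp$, to lie inside $\hp$). I would present the argument in exactly this order — subalgebra check, then invariance of $\hp$, then cite Lemma \ref{L:basic} — for each part.
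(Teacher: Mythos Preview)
Your proof is correct and matches the paper's approach exactly: the paper does not write out a formal proof but simply remarks that ``in each case, the orthogonal complement of the subalgebra is obviously invariant in the sense of Definition~\ref{D:inv},'' which is precisely the verification you carry out. Your presentation is more explicit than the paper's one-line justification, but the underlying argument is identical.
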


 Not surprisingly, the totally geodesic condition is very sensitive to the choice of inner product. For completeness, we prove the following result, which is due to Gotoh.

\begin{proposition}\label{P:any}\cite{Gotoh}
Suppose that a Lie algebra $\g$ has a subalgebra $\h$ that is totally geodesic for every inner product on $\g$. Then $\h$ is contained in the centre of $\g$.
\end{proposition}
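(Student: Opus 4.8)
The plan is to argue by contraposition, via a sharpening of the criterion in Lemma~\ref{L:basic}. Assume $\h$ is totally geodesic for every inner product on $\g$; I want to deduce $\h\sq\z(\g)$, and I may assume $\h\ne\g$. The first move is to decouple the condition in Lemma~\ref{L:basic}: given \emph{any} linear complement $\mathfrak c$ of $\h$ in $\g$, there is an inner product on $\g$ with $\hp=\mathfrak c$ (pick any inner products on $\h$ and on $\mathfrak c$ and declare them orthogonal), and for all such inner products the projection $\pi_\h$, hence the map $\phi\colon X\mapsto(Y\mapsto\pi_\h[X,Y])$ of Lemma~\ref{L:basic}, is the same — independent of the inner product chosen on $\h$. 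So (\ref{L:TotGeod-cond}) is saying that each $\phi(X)$, $X\in\mathfrak c$, is skew-symmetric for \emph{every} inner product on $\h$; and an endomorphism of a finite-dimensional space that is skew for every inner product must be $0$ (if $Bv\ne0$, one can choose an inner product with $\<Bv,v\>\ne0$, whether or not $Bv$ is proportional to $v$). Hence $\phi\equiv0$, i.e.\ $[\mathfrak c,\h]\sq\mathfrak c$. Conclusion of this step: \emph{every linear complement of $\h$ in $\g$ is $\h$-invariant} in the sense of Definition~\ref{D:inv}.

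Next I would fix $Y\in\h$, set $A=\ad_Y$, and note that the previous step gives $A\mathfrak c\sq\mathfrak c$ for every complement $\mathfrak c$ of $\h$, while also $AY=0$. The claim is that this forces $A=0$. The crux — and the step I expect to be the main obstacle — is to show that $Av\in\h\oplus\Span(v)$ for every $v\in\g\setminus\h$; here one must exploit the abundance of complements, not just one. Indeed, if $Av\notin\h\oplus\Span(v)$, then $v$ and $Av$ project to linearly independent vectors in $\g/\h$, and (taking $\h\ne0$, for otherwise $\h\sq\z(\g)$ already) one can build a complement $\mathfrak c$ containing $v$ but not $Av$: lift a basis of $\g/\h$ extending $\{v+\h,\,Av+\h\}$ to $\g$, sending $v+\h$ to $v$ and $Av+\h$ to $Av+h_0$ for a nonzero $h_0\in\h$; the span of the lifts is such a complement, contradicting $A$-invariance of $\mathfrak c$. (For $\codim\h=1$ this is automatic, since then $\h\oplus\Span(v)=\g$.) Granting this, write $Av=\mu(v)v+\eta(v)$ with $\eta(v)\in\h$; choosing a complement $\mathfrak c$ through $v$, $A$-invariance forces $\eta(v)=Av-\mu(v)v\in\mathfrak c\cap\h=0$, so every $v\notin\h$ is an eigenvector of $A$.

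It then remains to observe that a linear map all of whose vectors outside a proper nonzero subspace $\h$ are eigenvectors must be a scalar multiple of the identity — a short computation comparing the eigen-equations for $v_0$, $v_0+h$, $v_0+2h$ (with $v_0\notin\h$ and $0\ne h\in\h$, using that $v_0,h$ are linearly independent) gives $Ah=\lambda h$, and then comparing the equations for $v+h$ and $v$ gives $Av=\lambda v$ for all $v$, so $A=\lambda\,\id$. Since $A=\ad_Y$ kills $Y$, and $Y\ne0$ unless $A=0$ already, we get $\lambda=0$, hence $A=\ad_Y=0$. As $Y\in\h$ was arbitrary, $\ad_Y=0$ for all $Y\in\h$, i.e.\ $\h\sq\z(\g)$, which is what we wanted. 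Everything outside the "abundance of complements" step is routine linear algebra and an unravelling of Lemma~\ref{L:basic}.
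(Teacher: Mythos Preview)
Your argument is correct. Both your proof and the paper's begin identically: vary the inner product on $\h$ while holding the complement fixed, and use Lemma~\ref{L:basic} to conclude that \emph{every} linear complement of $\h$ is $\h$-invariant. After that the two diverge. You pass to an abstract linear-algebra statement: for fixed $Y\in\h$, the map $A=\ad(Y)$ preserves every complement of $\h$, which forces every vector outside $\h$ to be an eigenvector, hence $A$ is a scalar, hence zero since $AY=0$. The paper instead works with a handful of explicit complements: starting from a fixed basis $\{X_1,\dots,X_k\}$ of $\hp$, it writes $[X_i,Y]=\sum_j a_{ij}X_j$, then considers the shifted complements spanned by $\{c_iY+X_i\}$ for arbitrary constants $c_i$; invariance of each forces $\sum_j a_{ij}c_j=0$ for all choices of $c_j$, so $a_{ij}=0$ and $[X_i,Y]=0$. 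One more shift, replacing $X_1$ by $Z+X_1$ for $Z\in\h$, gives $[Z,Y]=0$. The paper's route is shorter and avoids the eigenvector detour; yours isolates a reusable linear-algebra fact (an operator preserving all complements of a proper nonzero subspace and annihilating a nonzero vector of that subspace must vanish). One small remark: your reduction ``I may assume $\h\ne\g$'' is not actually available from the hypotheses as stated---$\g$ is totally geodesic in itself for every inner product---but the paper's proof tacitly assumes $\codim\h\ge 1$ as well, so this is a shared omission in the statement rather than a flaw in your argument.
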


Notice that Lemma \ref{L:center} shows that if $\g$ is a solvable metric Lie algebra, then for every inner product on $\g$, the nonzero vectors orthogonal to the derived algebra are geodesics. More generally, one has:

\begin{lemma}\label{L:geo}
If $\g$ is a  Lie algebra and $Y\in \g$ is nonzero, then there is an inner product on $\g$ for which $Y$ is a geodesic if and only if there does not exist $X\in\g$ with $[X,Y]=Y$.
\end{lemma}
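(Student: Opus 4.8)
The plan is to convert the existence question about inner products into an elementary statement about separating the vector $Y$ from a fixed subspace of $\g$. The starting point is Remark~\ref{R:geo}: with respect to any inner product $\ip$ on $\g$, the vector $Y$ is a geodesic if and only if $\langle [X,Y],Y\rangle=0$ for all $X\in\g$. Writing $[X,Y]=-\ad_Y X$, the set $\{[X,Y]:X\in\g\}$ equals the subspace $W:=\im(\ad_Y)\sq\g$ (it is a linear subspace, so the sign is harmless), and hence $Y$ is a geodesic for $\ip$ precisely when $Y$ is orthogonal to $W$. So the lemma is equivalent to the assertion that an inner product with $Y\perp W$ exists if and only if no $X\in\g$ satisfies $[X,Y]=Y$.

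Next I would prove the purely linear-algebraic dichotomy: there is an inner product on $\g$ with $Y\perp W$ if and only if $Y\notin W$. One direction is immediate: if $Y\in W$, then for every inner product $\langle Y,Y\rangle>0$, so $Y$ can never be orthogonal to $W$. For the converse, if $Y\notin W$, pick a basis of $W$; since $Y\notin W$, this basis together with $Y$ is linearly independent, hence extends to a basis of $\g$, and declaring that basis orthonormal yields an inner product for which $Y\perp W$.

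Finally I would identify the condition $Y\notin W$ with the condition appearing in the statement. We have $Y\in W=\im(\ad_Y)$ if and only if there is $X\in\g$ with $\ad_Y X=[Y,X]=Y$, which, after replacing $X$ by $-X$, is the same as the existence of $X$ with $[X,Y]=Y$. Combining the three steps gives the lemma.

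I do not expect a substantive obstacle here: the argument is short, and the only point requiring care is the elementary bookkeeping with signs together with the observation that $\im(\ad_Y)$ is a linear subspace — which is exactly what lets one pass freely between the equations $[X,Y]=Y$ and $[X,Y]=-Y$, and between $\{[X,Y]:X\in\g\}$ and $\im(\ad_Y)$.
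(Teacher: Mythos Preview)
Your proposal is correct and follows essentially the same approach as the paper's own proof: both reduce the question, via Remark~\ref{R:geo}, to whether $Y$ can be made orthogonal to the subspace $\im(\ad(Y))$, and then observe that this is possible if and only if $Y\notin\im(\ad(Y))$, which is exactly the condition that no $X$ satisfies $[X,Y]=Y$. You spell out the basis-extension step and the sign bookkeeping a bit more explicitly than the paper does, but there is no substantive difference.
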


For example, consider the solvable, unimodular Lie algebra $\g$ with basis $\{X,Y,Z\}$ and relations
\[
[X,Y]=Y, [X,Z]=-Z, [Y,Z]=0.\]
The above lemma shows that the element $Y$ is  not a  geodesic for any inner product on $\g$. In fact, it is uncommon, for a given inner product, that all nonzero vectors are geodesics.

\begin{proposition}\label{P:comp}
Suppose that $\g$ is a Lie algebra. The following conditions are equivalent:
\begin{enumerate}[\rm (a)]
\item There is an inner product on $\g$  for which every nonzero element is a geodesic.
\item $\g$ is isomorphic to the direct sum of an abelian Lie algebra and a semisimple Lie algebra of compact type.
\end{enumerate}
\end{proposition}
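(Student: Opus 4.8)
The plan is to observe that condition~(a) is equivalent to the inner product being bi-invariant, and then to invoke the classical description of Lie algebras that admit a positive-definite bi-invariant inner product.

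To see (b)~$\Rightarrow$~(a), write $\g=\a\oplus\s$ with $\a$ abelian and $\s$ semisimple of compact type, so that the Killing form $B$ of $\s$ is negative definite and $-B$ is a positive-definite bi-invariant inner product on $\s$. Equip $\g$ with the inner product that restricts to $-B$ on $\s$, restricts to an arbitrary inner product on $\a$, and makes $\a\perp\s$. Since $[\g,\g]\sq\s$ and, for $X\in\g$, the operator $\ad_X$ preserves $\s$ and annihilates $\a$, the $\ad$-invariance of $B$ shows at once that $\ad_X$ is skew-symmetric for every $X\in\g$. In particular $\<[X,Y],Y\>=0$ for all $X,Y\in\g$, so every nonzero $Y$ is a geodesic by Remark~\ref{R:geo}.

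For (a)~$\Rightarrow$~(b), Remark~\ref{R:geo} shows that (a) is equivalent to the identity $\<[X,Y],Y\>=0$ for all $X,Y\in\g$; polarizing in $Y$ gives $\<[X,Y],Z\>+\<[X,Z],Y\>=0$ for all $X,Y,Z\in\g$, i.e.\ $\ad_X\in\mathfrak{so}(\g)$ for every $X$, so the inner product is bi-invariant. Two consequences follow: (i) the orthogonal complement $\mathfrak i^\perp$ of any ideal $\mathfrak i$ of $\g$ is again an ideal, since $\<[\g,\mathfrak i^\perp],\mathfrak i\>=-\<\mathfrak i^\perp,[\g,\mathfrak i]\>\sq\<\mathfrak i^\perp,\mathfrak i\>=0$; and (ii) orthogonal ideals commute, since $[\mathfrak i,\mathfrak i^\perp]\sq\mathfrak i\cap\mathfrak i^\perp=0$. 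Iterating (i), and using (ii) to see that a minimal ideal of an orthogonal summand is still an ideal of $\g$, we may write $\g=\m_1\oplus\dots\oplus\m_k$ as an orthogonal direct sum of minimal nonzero ideals of $\g$. For each $i$, the subspace $[\m_i,\m_i]$ is an ideal of $\g$ contained in $\m_i$, hence equals $0$ or $\m_i$: if it is $0$, then $\m_i$ is central by (ii) and so one-dimensional by minimality; if it is $\m_i$, then by (ii) every ideal of $\m_i$ is an ideal of $\g$, so $\m_i$ is simple. Therefore $\g=\a\oplus\s$, where $\a$ is the sum of the one-dimensional factors (abelian) and $\s$ the sum of the simple ones (semisimple, with trivial centre). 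Finally, $\s$ is of compact type: for $0\ne X\in\s$ the operator $\ad_X|_\s$ is skew-symmetric and nonzero, so $\tr((\ad_X|_\s)^2)<0$ because the inner product is positive definite; that is, the Killing form of $\s$ is negative definite.

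The genuinely routine parts are the polarization step and the verification in (b)~$\Rightarrow$~(a). The point that needs care is the ideal decomposition in (a)~$\Rightarrow$~(b): bi-invariance must be used, through observation~(ii), to guarantee that minimality of $\m_i$ as an ideal of $\g$ already forces $\m_i$ to be simple or one-dimensional. Without bi-invariance this can fail, since an abelian minimal ideal may have dimension greater than one, as for the Lie algebra of the Euclidean group of the plane; that algebra, consistently, carries no bi-invariant inner product.
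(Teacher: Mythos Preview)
Your proof is correct and follows the same overall line as the paper: both directions rest on the observation that condition~(a) is equivalent, via Remark~\ref{R:geo} and polarization, to every $\ad_X$ being skew-adjoint, i.e.\ to the inner product being bi-invariant. The difference is only in how the structure of such algebras is established in the direction (a)~$\Rightarrow$~(b). The paper simply invokes Milnor's results (\cite[Lemmas~7.2 and~7.5]{Mi}) to conclude that $\g$ is a direct sum of an abelian algebra and a compact semisimple one. You instead give a self-contained argument: from bi-invariance you derive that orthogonal complements of ideals are ideals and that orthogonal ideals commute, decompose $\g$ orthogonally into minimal ideals, show each is one-dimensional central or simple, and check directly that the Killing form on the semisimple summand is negative definite. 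This buys independence from the external reference and makes the mechanism transparent; the paper's version is shorter but relies on Milnor for the substantive step. Your closing remark about the Euclidean Lie algebra nicely highlights why bi-invariance, through observation~(ii), is essential for forcing abelian minimal ideals to be one-dimensional.
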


On the other hand, it is not difficult to see that every Lie algebra has an inner product for which there is at least one geodesic. For a given, fixed inner product,  Lemma \ref{L:center} shows that there is at least one geodesic when $\g$ is solvable.
The general case is less trivial. The following result is due to Ka{\u\i}zer  \cite{Ka}. We provide a different proof that uses classic topological results concerning maps and vector fields on spheres. Our proof is very similar to Du{\v{s}}ek's proof  of \cite[Theorem 7]{Du}, which is a more general result.

\begin{theorem}\label{T:somegeo}\cite{Ka}
Every metric Lie algebra possesses a geodesic.
\end{theorem}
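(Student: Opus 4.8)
The plan is to find a geodesic by a topological fixed-point argument on the unit sphere of $\g$. By Remark \ref{R:geo}, a unit vector $Y$ is a geodesic precisely when $\langle[X,Y],Y\rangle=0$ for all $X\in\g$, equivalently when $\langle\nabla_Y Y,X\rangle=0$ for all $X$, i.e. $\nabla_Y Y=0$. So I would introduce the map $F\colon S\to S$, where $S$ is the unit sphere of $\g$ with respect to the inner product, which is defined using the self-adjoint operator $\operatorname{ad}_Y^{*}$ (the metric adjoint of $\operatorname{ad}_Y$): concretely, the vector $\nabla_Y Y$ equals $-\tfrac12\operatorname{ad}_Y^{*}Y$ up to a harmless constant, by \eqref{e:Levi-Civita}, since $2\langle\nabla_Y Y,Z\rangle=2\langle[Z,Y],Y\rangle=-2\langle \operatorname{ad}_Y^* Y,Z\rangle$... let me instead phrase the map directly. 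Define the continuous vector field $V(Y)=\nabla_Y Y$ on $S$; by the polarization computation above it is a quadratic (hence smooth) field on $\g$, and one checks $\langle V(Y),Y\rangle=0$, so $V$ is tangent to $S$ at each point. A geodesic direction is exactly a zero of $V$ on $S$.

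The key step is then: a continuous tangent vector field on an even-dimensional sphere must vanish somewhere (hairy ball theorem / Euler characteristic $\chi(S^{2k})=2\ne0$). Thus if $\dim\g$ is odd, the sphere $S$ has even dimension and $V$ has a zero, giving a geodesic immediately. For the remaining case $\dim\g$ even, the sphere $S=S^{2k-1}$ is odd-dimensional and carries nonvanishing fields, so I would pass to the projective space $\mathbb{RP}^{n-1}$, $n=\dim\g$, or argue as follows: the field $V$ satisfies $V(-Y)=V(Y)$ (it is quadratic in $Y$), so it descends to a section of an appropriate bundle over $\mathbb{RP}^{n-1}$; alternatively, normalize to get a self-map and invoke a Lefschetz/Poincaré–Hopf count. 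The cleanest route, following Du\v{s}ek's argument for \cite[Theorem 7]{Du}, is: consider $G\colon S\to S$ sending a unit $Y$ that is \emph{not} a geodesic to $V(Y)/|V(Y)|$; if $V$ never vanishes this is defined on all of $S$, it is continuous, and because $V(-Y)=V(Y)$ one gets $G(-Y)=G(Y)$, i.e. $G$ factors through $\mathbb{RP}^{n-1}$. A map $S^{n-1}\to S^{n-1}$ that factors through $\mathbb{RP}^{n-1}$ has even degree; but one also shows directly (e.g. by evaluating the degree via a generic fiber, or noting $\langle G(Y),Y\rangle=0$ forces $G$ to be homotopic to a fixed-point-free map, hence of degree $(-1)^n=1$ since $n$ is even) that $\deg G$ is odd --- a contradiction. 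Hence $V$ must vanish, producing a geodesic.

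I expect the main obstacle to be the even-dimensional case and pinning down the degree parity argument cleanly: one must make precise why $G$ factoring through $\mathbb{RP}^{n-1}$ forces even degree (this uses that $H_{n-1}(\mathbb{RP}^{n-1};\mathbb Z)\to H_{n-1}(S^{n-1};\mathbb Z)$ induced by the covering is multiplication by $2$ on the relevant orientations, valid for $n-1$ odd, i.e. $n$ even), and simultaneously why $\langle G(Y),Y\rangle=0$ (equivalently, $G$ has no fixed points and no antipodal points) forces the degree to be $(-1)^n=+1$, which is odd. The two conclusions are incompatible, so a zero of $V$ exists. The remaining pieces --- checking $V$ is continuous (indeed polynomial) on $S$, tangent to $S$, and even under $Y\mapsto -Y$, and that its zeros are exactly geodesics --- are routine given Remark \ref{R:geo} and formula \eqref{e:Levi-Civita}, so I would state them briefly and spend the bulk of the write-up on the degree computation.
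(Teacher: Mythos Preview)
Your proposal is correct and follows essentially the same argument as the paper: the paper's vector $f(Y)$ (the dual of $\alpha_Y(X)=\langle[X,Y],Y\rangle$) is precisely your $V(Y)=\nabla_Y Y$, and the remainder---hairy ball for $n$ odd, and for $n$ even the contradiction between even degree (from $V(-Y)=V(Y)$ forcing the normalized map to factor through $\mathbb{RP}^{n-1}$) and odd degree (from $\langle V(Y),Y\rangle=0$)---is identical. The only cosmetic difference is that the paper reaches degree $1$ via ``no point sent to its antipode $\Rightarrow$ homotopic to the identity'', whereas you reach it via ``fixed-point-free $\Rightarrow$ homotopic to the antipodal map $\Rightarrow$ degree $(-1)^n=1$''.
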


For some results on the existence of bases whose elements are all geodesics, see \cite{CLNN}.

We now turn to the nilpotent case.  Recall that Mal\`cev's theorem \cite{Ma} says that when $G$ is nilpotent, $G$ possesses a uniform lattice if and only if there is a basis for $\g$ relative to which the commutator coefficients are integers. So this is a condition that is often evident in practice. When $G$ possesses a uniform lattice $\Gamma$, the set of right cosets of $\Gamma$ has a natural structure of a compact Riemannian manifold  $M$ for which the quotient map $G\to M$ is a Riemannian covering and the totally geodesic foliation defined by $\h$ on $G$ descends to give a  totally geodesic foliation on $M$. This construction provides important standard  examples of totally geodesic foliations on compact manifolds.

The following fact is an immediate consequence of Lemma \ref{L:geo}.

\begin{lemma}\label{L:geodesic}
If $\g$ is a nilpotent Lie algebra and $Y\in \g$ is nonzero, then there is an inner product on $\g$ for which $Y$ is a geodesic.
\end{lemma}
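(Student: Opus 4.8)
The plan is to derive the statement directly from Lemma~\ref{L:geo}. According to that lemma, an inner product on $\g$ making $Y$ a geodesic exists precisely when there is no $X\in\g$ with $[X,Y]=Y$, so it suffices to rule out the existence of such an $X$ under the hypothesis that $\g$ is nilpotent.

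Suppose, for contradiction, that $[X,Y]=Y$ for some $X\in\g$. Rewriting this as $\ad_X(Y)=Y$, we see that $Y$ is an eigenvector of the endomorphism $\ad_X\colon\g\to\g$ with eigenvalue $1$. But in a nilpotent Lie algebra every operator $\ad_X$ is nilpotent: iterating the lower central series $\g\supseteq[\g,\g]\supseteq[\g,[\g,\g]]\supseteq\cdots$, which reaches $0$ after finitely many steps, one has that $\ad_X^{\,k}(Z)$ lies in the $(k+1)$-st term of this series for every $Z\in\g$, so $\ad_X^{\,k}=0$ for $k$ large enough (alternatively, invoke Engel's theorem). A nilpotent endomorphism has $0$ as its only eigenvalue, so $\ad_X(Y)=Y$ forces $Y=0$, contradicting $Y\neq 0$.

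Hence no $X\in\g$ satisfies $[X,Y]=Y$, and Lemma~\ref{L:geo} then provides an inner product on $\g$ for which $Y$ is a geodesic. The argument is essentially immediate; the only point worth recording is the nilpotency of each $\ad_X$ in a nilpotent Lie algebra, which is standard, so there is no genuine obstacle here.
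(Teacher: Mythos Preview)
Your proof is correct and follows exactly the approach the paper takes: the paper simply states that the lemma is an immediate consequence of Lemma~\ref{L:geo}, and you have spelled out the one-line reason (nilpotency of $\ad_X$ rules out $[X,Y]=Y$) that the paper leaves implicit.
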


For a given inner product, most nilpotent Lie algebras have more geodesics than those provided by Lemma \ref{L:center}.

\begin{proposition}\label{P:heis}
Suppose that $\g$ is a nonabelian nilpotent Lie algebra of dimension $n$. The following conditions are equivalent:
\begin{enumerate}[\rm (a)]
\item For each inner product on $\g$, the only geodesics are the vectors orthogonal to the derived algebra $[\g,\g]$ and those contained in the centre $\z(\g)$ of $\g$.
\item $\g$ is 2-step nilpotent and for each $X\in\g$ with $X\not\in\z(\g)$, the adjoint map $\ad(X)$ is surjective onto $[\g,\g]$.
\end{enumerate}
\end{proposition}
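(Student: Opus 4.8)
The plan is to prove the two implications separately, using Remark~\ref{R:geo} throughout: a nonzero $Y$ is a geodesic if and only if $\langle[X,Y],Y\rangle=0$ for all $X\in\g$, equivalently $\ad(X)$ is skew with respect to the inner product when restricted to the line $\Span(Y)$, i.e. $\langle\ad(X)Y,Y\rangle=0$ for all $X$.

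For the implication (b)$\Rightarrow$(a), assume $\g$ is $2$-step nilpotent and that $\ad(X)\colon\g\to[\g,\g]$ is surjective for every $X\notin\z(\g)$. Fix an inner product and let $Y$ be a geodesic that is neither in $\z(\g)$ nor orthogonal to $[\g,\g]$; I will derive a contradiction. Write $Y=Y_1+Y_2$ with $Y_1\in[\g,\g]$, $Y_2\perp[\g,\g]$, where $Y_1\neq 0$ by assumption. Since $\g$ is $2$-step, $[\g,\g]\sq\z(\g)$, so $Y\notin\z(\g)$ forces $Y_2\neq 0$ and in fact $\ad(Y)=\ad(Y_2)$ is surjective onto $[\g,\g]$ (here one checks $Y_2\notin\z(\g)$: if $Y_2\in\z(\g)$ then $Y\in\z(\g)$, contradiction). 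In particular there is $X\in\g$ with $[X,Y]=Y_1$ (using surjectivity of $\ad(Y)$ and skew-symmetry of the structure, or directly surjectivity of $\ad$ on a suitable element). Then $\langle[X,Y],Y\rangle=\langle Y_1,Y_1+Y_2\rangle=\langle Y_1,Y_1\rangle=|Y_1|^2\neq 0$, contradicting that $Y$ is a geodesic. (The one point requiring care is producing an $X$ with $[X,Y]=Y_1$ rather than merely hitting an arbitrary element of $[\g,\g]$; since $Y_1\in[\g,\g]$ this is exactly surjectivity of $\ad$, applied at a non-central element, and one must confirm the relevant element is non-central — this is the technical heart of this direction.)

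For the implication (a)$\Rightarrow$(b), I argue the contrapositive: if (b) fails, I construct an inner product and a geodesic violating (a). There are two ways (b) can fail. First, suppose $\g$ is not $2$-step, so the lower central series has length $\geq 3$; pick $Y\in\g^{(k)}$, the last nonzero term of the lower central series, with $Y\notin\z(\g)$ would be impossible since $\g^{(k)}\sq\z(\g)$ — so instead one uses that when $\g$ has step $\geq 3$ there exists a nonzero $Y\in[\g,\g]\cap\z(\g)^{\perp}$ for a cleverly chosen inner product; more robustly, choose $Y\in[\g,\g]\setminus\z(\g)$ (nonempty precisely when the step is $\geq 3$), and by Lemma~\ref{L:geodesic} choose an inner product making $Y$ a geodesic; this $Y$ is in $[\g,\g]$ so not orthogonal to $[\g,\g]$, and not in $\z(\g)$, contradicting (a). (One must verify Lemma~\ref{L:geodesic}'s hypothesis: since $\g$ is nilpotent, for \emph{any} nonzero $Y$ there is no $X$ with $[X,Y]=Y$, so such an inner product exists.) Second, suppose $\g$ is $2$-step but there is $X_0\notin\z(\g)$ with $\ad(X_0)$ not surjective onto $[\g,\g]$; pick $W\in[\g,\g]\setminus\im\ad(X_0)$ and set $Y=X_0$. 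Now build an inner product for which $Y$ is a geodesic and $Y$ is neither central (it isn't) nor orthogonal to $[\g,\g]$. The geodesic condition $\langle[X,X_0],X_0\rangle=0$ for all $X$ says $X_0\perp\im\ad(X_0)$; since $W\notin\im\ad(X_0)$ one has room to choose the inner product so that additionally $X_0$ is \emph{not} orthogonal to all of $[\g,\g]$ (e.g. arrange $\langle X_0,W\rangle\neq 0$, which is consistent with $X_0\perp\im\ad(X_0)$ because $W\notin\im\ad(X_0)$). This yields the required contradiction to (a).

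The main obstacle is the second half of (a)$\Rightarrow$(b): one must simultaneously satisfy the geodesic constraint $\langle\ad(X)Y,Y\rangle=0$ for \emph{all} $X$ (a family of linear conditions on the inner product) and the non-orthogonality $\langle Y,[\g,\g]\rangle\neq 0$, and show these are compatible exactly when $\ad(Y)$ misses some vector of $[\g,\g]$. The clean way to handle this is to work with $\im\ad(Y)$: the geodesic condition is $Y\perp\im\ad(Y)$, so one needs $\im\ad(Y)\subsetneq[\g,\g]$ in order to place $Y$ non-orthogonally to $[\g,\g]$ while keeping $Y\perp\im\ad(Y)$; picking a complement and defining the inner product block-wise makes this explicit. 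I also expect a small amount of care is needed to show the two failure modes of (b) are exhaustive and that in the first mode one indeed lands outside both $\z(\g)$ and $[\g,\g]^{\perp}$ — a decomposition argument $\g=[\g,\g]\oplus([\g,\g])^{\perp}$ handles it.
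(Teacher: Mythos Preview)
Your proposal is correct and tracks the paper's proof closely. The direction (b)$\Rightarrow$(a) is identical to the paper's argument: write the geodesic as its $[\g,\g]$-component plus its orthogonal complement, use surjectivity of $\ad$ at a non-central element to hit that component, and compute the inner product. For (a)$\Rightarrow$(b), your first failure mode (step $\geq 3$) is exactly the paper's use of Lemma~\ref{L:geodesic}: a nonzero $Y\in[\g,\g]\setminus\z(\g)$ can be made a geodesic, and then $\langle Y,Y\rangle\neq 0$ shows it is not orthogonal to $[\g,\g]$.

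The one place you diverge is the second failure mode. The paper takes $X\notin\z(\g)$ with $\ad(X)$ not surjective and $Y\in[\g,\g]\setminus\im\ad(X)$, builds an orthonormal basis with $X\perp\z(\g)$ and $Y\perp\im\ad(X)$, and then checks directly that $X+Y$ is a geodesic violating (a). You instead keep $X_0$ itself as the geodesic and tilt the inner product so that $X_0\perp\im\ad(X_0)$ while $\langle X_0,W\rangle\neq 0$ for some $W\in[\g,\g]\setminus\im\ad(X_0)$. Both constructions exploit the same gap $\im\ad(X_0)\subsetneq[\g,\g]$; yours is a touch more abstract (fewer basis computations) but needs the extra observation that $X_0\notin\im\ad(X_0)$ (from nilpotency, via Lemma~\ref{L:geo}) to guarantee such an inner product exists. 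You allude to this via Lemma~\ref{L:geodesic}, so the argument goes through, but you should make that step explicit in a final write-up.
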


Further results for totally geodesic subalgebras 2-step nilpotent Lie algebras are given by Eberlein \cite{Eb}. Condition (b) of the above proposition is a slightly weaker version of Eberlein's  {\em nonsingularity}  condition, which requires that for all $X\not\in\z(\g)$, the map $\ad(X)$ is surjective onto $\z(\g)$. In particular, for nonsingular 2-step nilpotent Lie algebras with one dimensional centre,   \cite[Corollary 5.7]{Eb} gives a classification of totally geodesic subalgebras and extends the above proposition for these algebras. An example of an algebra with\break $2$-dimensional centre satisfying condition (b) of the above proposition is the 6-dimensional algebra with basis $\{X_1,X_2,X_3,X_4,Y_1,Y_2\}$ and nontrivial relations
\[
[X_1,X_2]=-[X_3,X_4]=Y_1,\quad
[X_1,X_3]=[X_2,X_4]=Y_2.
\]

In general, if a metric Lie algebra $\g$ has an orthonormal basis $\{X_1,\dots,X_n\}$, then the criterion $\nabla_Y Y=0$ for a vector $Y=\sum_{i=1}^n a_iX_i$ to be a geodesic is quadratic in the coefficients $a_i$. So the set of geodesics is a real semi-algebraic variety, which can be quite large as the following proposition shows.

\begin{proposition}\label{P:4D1Dtga}
Let $\g$ be $4$-dimensional metric nilpotent Lie algebra with a two-dimensional derived algebra. Then
    there is an orthonormal basis  $\{X_1,X_2,X_3,X_4\}$ such that the defining relations for $\g$ are
        \[
            [X_1,X_2] =\alpha X_3+\beta X_4,\quad [X_1,X_3] = \gamma X_4,
        \]
    where $\alpha,\beta,\gamma\in \R$ and $\alpha,\gamma>0$. Furthermore,
\begin{enumerate}[\rm (a)]
    \item
    the set of geodesic vectors is the set of nonzero vectors in the union of the subspace $\Span( X_1,X_2)$ orthogonal to the derived algebra and
    the cone $\{X=xX_2+yX_3+zX_4 : \alpha xy + \beta xz + \gamma yz=0\} \subseteq \Span(X_2,X_3,X_4)$.
\item if $\g$ has a $2$-dimensional totally geodesic subalgebra $\h$, then $\beta =0$ and in this case, either
$\h=\Span(X_2,X_4)$ or $\h=\Span(\gamma X_2-\alpha X_4,X_3)$.\end{enumerate}
\end{proposition}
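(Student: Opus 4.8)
The plan is to first put $\g$ into the stated normal form, then handle (a) by a direct computation and (b) by classifying the $2$-dimensional subalgebras.

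\emph{Normal form.} Since $\g$ is nilpotent with $\dim[\g,\g]=2$, I would first rule out that $\g$ is $2$-step: in that case $[\g,\g]\sq\z(\g)$ and the bracket induces a surjection $\Lambda^2(\g/\z(\g))\to[\g,\g]$, which is impossible because $\dim\z(\g)\ge 2$ forces $\dim\g/\z(\g)\le 2$, hence $\dim\Lambda^2(\g/\z(\g))\le 1$. The same computation gives $\dim\z(\g)=1$, so $\g^3:=[\g,[\g,\g]]$ is $1$-dimensional, $\g^4=0$, and $\z(\g)=\g^3$. Now build an orthonormal basis from the top down: let $X_4$ span $\g^3$ (so $X_4$ is central), let $X_3$ be a unit vector in $[\g,\g]\cap X_4^\perp$, note that $[\,\cdot\,,X_3]$ maps $\g$ into $\g^3=\Span(X_4)$ and does not vanish identically on $[\g,\g]^\perp$ (otherwise $X_3$ would be a nonzero element of $[\g,\g]$ lying in $\z(\g)=\Span(X_4)$), let $X_2$ span its kernel inside $[\g,\g]^\perp$, and let $X_1\in[\g,\g]^\perp\cap X_2^\perp$ be a unit vector. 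Then automatically $[X_2,X_3]=0$, $[X_1,X_3]=\gamma X_4$ with $\gamma\ne 0$, $[X_i,X_4]=0$, and $[X_1,X_2]=\alpha X_3+\beta X_4$; moreover $\alpha\ne 0$, since otherwise $[\g,\g]=\Span\{[X_1,X_2],[X_1,X_3],[X_2,X_3]\}\sq\Span(X_4)$. Replacing $X_1$ and $X_2$ by their negatives if needed gives $\alpha,\gamma>0$, and the Jacobi identity holds automatically since $X_4$ is central.

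\emph{Part (a).} By Remark \ref{R:geo}, $Y=\sum x_iX_i$ is a geodesic iff $\langle[X,Y],Y\rangle=0$ for all $X\in\g$, and it suffices to take $X\in\{X_1,X_2,X_3,X_4\}$. Using the bracket relations this reduces to the system $\alpha x_2x_3+\beta x_2x_4+\gamma x_3x_4=0$, $x_1(\alpha x_3+\beta x_4)=0$, $x_1x_4=0$ (the case $X=X_4$ being vacuous). If $x_1=0$, only the first equation remains and cuts out the stated cone inside $\Span(X_2,X_3,X_4)$; if $x_1\ne 0$, the third equation forces $x_4=0$ and then the second forces $x_3=0$ (using $\alpha,\gamma\ne0$), so $Y\in\Span(X_1,X_2)$. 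The geodesic set is the union of these two sets with $0$ removed.

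\emph{Part (b).} A $2$-dimensional subalgebra of a nilpotent Lie algebra is abelian, so $\h=\Span(U,V)$ with $[U,V]=0$; computing $[U,V]$ and using $\alpha,\gamma\ne0$, the condition $[U,V]=0$ reads $u_1v_2=u_2v_1$ and $u_1v_3=u_3v_1$. This splits the $2$-dimensional subalgebras into two families: (I) $\h\sq\m:=\Span(X_2,X_3,X_4)$ (an abelian ideal), and (II) $\h=\Span(U,X_4)$ with $U=X_1+pX_2+qX_3$ for some $p,q\in\R$. For family (II) one has $\hp\sq\Span(X_1,X_2,X_3)$, and imposing the condition of Lemma \ref{L:basic} already on the pair $(U,X_4)$, i.e.\ $\langle[X,U],X_4\rangle=0$ for all $X\in\hp$, yields two linear equations in $\beta,\gamma$ that simplify to $p^2+q^2+1=0$; hence no member of family (II) is totally geodesic. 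For family (I), $\hp=\Span(X_1,W)$ with $W\in\m$; the condition of Lemma \ref{L:basic} holds automatically for $X=W$ (as $\m$ is abelian), and for $X=X_1$ it says precisely that the symmetric form $S:=A+A^{T}$, $A:=\ad(X_1)|_\m$, vanishes on $\h$. In the orthonormal basis $(X_2,X_3,X_4)$ of $\m$,
\[
S=\begin{pmatrix}0&\alpha&\beta\\ \alpha&0&\gamma\\ \beta&\gamma&0\end{pmatrix},\qquad \det S=2\alpha\beta\gamma .
\]
Thus a totally geodesic $\h$ in family (I) is a $2$-dimensional totally isotropic subspace of $(\m,S)$. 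If $\beta\ne 0$ then $S$ is nondegenerate, so its signature $(p,q)$ on $\R^3$ satisfies $\min(p,q)\le 1$ and no such subspace exists; hence $\beta=0$. When $\beta=0$, $S$ has rank $2$ with radical $\Span(\gamma X_2-\alpha X_4)$ and quadratic form $q(sX_2+tX_3+uX_4)=2t(\alpha s+\gamma u)$; a $2$-dimensional subspace is totally isotropic iff it contains the radical and projects to a null line for the induced $(1,1)$-form on $\m/\mathrm{rad}$, which gives exactly the two planes $\Span(X_2,X_4)$ and $\Span(\gamma X_2-\alpha X_4,X_3)$, both of which lie in $\m$ and are therefore abelian subalgebras.

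I expect the main obstacle to be part (b): enumerating the $2$-dimensional abelian subalgebras, disposing of family (II), and, above all, recognizing that for family (I) the totally geodesic condition is equivalent to total isotropy of $S=\ad(X_1)|_\m+\ad(X_1)^{T}|_\m$, so that $\det S=2\alpha\beta\gamma$ forces $\beta=0$ and simultaneously pins down the two planes. The normal form requires some bookkeeping with the lower central series and with signs but is otherwise routine, and part (a) is a short computation once the normal form is available.
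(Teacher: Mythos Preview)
Your argument is correct, but for part (b) you take a genuinely different route from the paper. The paper's key observation is that in an abelian totally geodesic subalgebra every nonzero vector is a geodesic of $\g$ (combine abelianness with condition~\eqref{L:TotGeod-cond}), so $\h$ must be a two-dimensional linear subspace contained in the geodesic variety already computed in part (a); one then simply lists the two-dimensional subspaces of that variety and checks which are abelian. In other words, the paper leverages (a) to do (b). You instead bypass (a) entirely: you classify the two-dimensional abelian subalgebras into families (I) and (II), rule out (II) by a short calculation (your ``$p^2+q^2+1=0$'' is really the nonvanishing determinant of the $2\times 2$ system forcing $\beta=\gamma=0$), and for family (I) you rephrase the totally geodesic condition as total $S$-isotropy for $S=\ad(X_1)|_\m+\ad(X_1)|_\m^{T}$, read off $\det S=2\alpha\beta\gamma$, and thereby both force $\beta=0$ and identify the two planes. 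Your approach is more linear-algebraic and self-contained; the paper's is shorter because the geodesic variety from (a) already encodes the relevant quadratic constraint. The normal-form construction and part (a) are essentially the same in both treatments, yours being slightly more explicit about the lower central series.
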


For nilpotent Lie algebras, the existence of a totally geodesic subalgebra of low codimension is a strong restriction. We prove:

\begin{proposition}\label{P:cod1}
Let $\g$ be a nilpotent metric Lie algebra and suppose that $\h$ is a totally geodesic subalgebra of $\g$ of codimension 1. Then $\g$ is a direct sum of Lie ideals, $\g\cong \h\oplus \R$.
\end{proposition}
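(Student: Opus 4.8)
The plan is to use the fact that the orthogonal complement of a codimension-one subalgebra is a line, $\hp=\R X$ with $\|X\|=1$, and to play off the algebraic totally geodesic condition against the nilpotency of the adjoint operators.

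First, by Lemma~\ref{L:basic} the operator $\phi(X)=\pi_\h\circ\ad(X)|_{\h}\in\End(\h)$ is skew-symmetric with respect to the inner product restricted to $\h$. For $Y\in\h$ write $[X,Y]=\phi(X)(Y)+\beta(Y)X$, where $\beta\in\h^{*}$ is the functional $\beta(Y)=\langle[X,Y],X\rangle$. In a basis of $\g$ adapted to the decomposition $\g=\h\oplus\R X$, the matrix of $\ad(X)$ is block lower triangular with diagonal blocks $\phi(X)$ and $0$ (since $\ad(X)X=0$); hence $\ad(X)^{k}$ has $\phi(X)^{k}$ as its upper-left block. As $\g$ is nilpotent, $\ad(X)$ is nilpotent, so $\phi(X)$ is a nilpotent skew-symmetric endomorphism of the Euclidean space $\h$, and therefore $\phi(X)=0$. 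Consequently $[X,Y]=\beta(Y)X$ for all $Y\in\h$.

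Next I would show $\beta\equiv0$. If $\beta(Y_{0})\neq0$ for some $Y_{0}\in\h$, then $[Y_{0},X]=-\beta(Y_{0})X$, so $X$ is an eigenvector of $\ad(Y_{0})$ with nonzero eigenvalue $-\beta(Y_{0})$; this contradicts the nilpotency of $\ad(Y_{0})$. Hence $\beta=0$, and so $\ad(X)=0$, i.e.\ $X\in\z(\g)$. Assembling the conclusion is then immediate: since $X$ is central, $[\g,\g]=[\h,\h]\sq\h$, so $\h$ is an ideal, $\R X$ is an ideal, $\g=\h\oplus\R X$ as vector spaces, and $[\h,\R X]=0$; thus $\g\cong\h\oplus\R$ as a direct sum of Lie ideals.

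There is no serious obstacle here; the one observation carrying the argument is that a nilpotent skew-symmetric operator vanishes, which is what forces $\phi(X)=0$. The same remark pinpoints why the statement is genuinely special to the nilpotent case: it is precisely the vanishing of the semisimple part of $\ad(X)|_{\h}$, together with the eigenvalue obstruction furnished by nilpotency of $\ad(Y_{0})$, that rules out the (otherwise permissible) possibility of a nontrivial rotational or shearing action of $\hp$ on $\h$.
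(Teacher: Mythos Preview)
Your proof is correct. It differs from the paper's in organization: the paper first shows $\h$ is an ideal by invoking Lemma~\ref{L:end} (the map $\psi:\h\to\End(\hp)$ is a nilpotent representation, hence each $\psi(Y)$ is a nilpotent endomorphism of the line $\hp$, so zero; this is your $\beta\equiv0$), and only then argues that $\ad(X)|_\h=0$ by choosing $Y$ with $\ad(X)(Y)\neq0$, $\ad(X)^2(Y)=0$, and computing $\langle[X,Y],Z\rangle+\langle[X,Z],Y\rangle=\|Z\|^2\neq0$ for $Z=\ad(X)(Y)$. You reverse the order and bypass Lemma~\ref{L:end}: the block-triangular structure of $\ad(X)$ gives nilpotency of $\phi(X)$ directly, and the observation ``nilpotent skew-symmetric $\Rightarrow$ zero'' replaces the paper's explicit $Y,Z$ contradiction (of which it is of course the abstract form). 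Your eigenvalue argument for $\beta=0$ is likewise a clean substitute for the appeal to Lemma~\ref{L:end} in this one-dimensional situation. Both routes rest on the same two applications of nilpotency; yours is slightly more self-contained.
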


A key subfamily of the variety of nilpotent Lie algebras is provided by the filiform algebras \cite{GK}. Recall that a nilpotent Lie algebra $\g$ of dimension $n$ is said to be {\em filiform} if it possesses an element of maximal nilpotency; that is, there exists $X\in\g$  with $\ad^{n-2}(X)\not=0$, where $\ad(X):\g\to\g$ is the adjoint map $\ad(X)(Y)=[X,Y]$. The simplest example of a filiform Lie algebra is the {\em standard} filiform algebra $L_n$, which has  a basis $\{X_1,\dots,X_n\}$, whose only nonzero relations are $[X_1,X_i]=X_{i+1}$, for $i=2,\dots,n-1$. We will refer to this as the {\em standard  filiform metric} algebra when the basis $\{X_1,\dots,X_n\}$ is taken to be orthonormal. In this case, the subalgebra generated by the $X_i$ with $i$ even, is a totally geodesic subalgebra.  Kerr and Payne have shown that $L_n$ has no totally geodesic proper subalgebras of greater dimension.

\begin{theorem}\label{T:KP}\cite[Theorem 6.4]{KP}
If $\h$ is a totally geodesic proper subalgebra of the standard filiform metric Lie algebra $L_n$, then $\dim( \h)\leq n/2$.
\end{theorem}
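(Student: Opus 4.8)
The plan is to fix the orthonormal basis $\{X_1,\dots,X_n\}$ of $\g=L_n$ and split the argument according to whether or not $\h$ lies inside the abelian ideal $\V:=\Span(X_2,\dots,X_n)$. Write $N:=\ad(X_1)|_\V$; in the basis $X_2,\dots,X_n$ this is the nilpotent shift $X_i\mapsto X_{i+1}$ (with $X_n\mapsto 0$), and note $[\g,\g]=\Span(X_3,\dots,X_n)\sq\V$. Throughout I would use the criterion of Lemma~\ref{L:basic}: $\h$ is totally geodesic if and only if $\<[X,Y],Z\>+\<[X,Z],Y\>=0$ for all $X\in\hp$ and all $Y,Z\in\h$.

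\emph{Case 1: $\h\sq\V$.} Since $\h\sq X_1^{\perp}$ we have $X_1\in\hp$, so taking $X=X_1$ in the criterion (and using $[X_1,Y]=NY$ for $Y\in\V$) shows that the symmetric bilinear form $S(Y,Z):=\<NY,Z\>+\<NZ,Y\>$ on $\V$ vanishes identically on $\h$; that is, $\h$ is totally isotropic for $S$. In the basis $X_2,\dots,X_n$ the Gram matrix of $S$ is the adjacency matrix of the path on $n-1$ vertices — symmetric, tridiagonal, zero diagonal, $1$'s just above and below. Its eigenvalues are $2\cos(\pi k/n)$, $k=1,\dots,n-1$; these are pairwise distinct, and since $2\cos(\pi(n-k)/n)=-2\cos(\pi k/n)$ they occur in opposite pairs. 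Hence $S$ has balanced signature $(p,p)$ and nullity $\nu\le 1$, with $2p+\nu=n-1$, so any totally isotropic subspace has dimension at most $\nu+p=(n-1+\nu)/2\le n/2$. Thus $\dim\h\le n/2$, and for $n$ even the bound is realized by $\Span(X_2,X_4,\dots,X_n)$.

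\emph{Case 2: $\h\not\sq\V$.} Put $\h_0:=\h\cap\V$, so $\dim\h=\dim\h_0+1$, and after rescaling choose $Y_0=X_1+w\in\h$ with $w\in\V$. For $Y\in\h_0$ we get $[Y_0,Y]=[X_1,Y]=NY$ (as $w,Y\in\V$ commute) and $[Y_0,Y]\in\h\cap[\g,\g]\sq\h_0$, so $\h_0$ is $N$-invariant; as $N|_\V$ is a single nilpotent Jordan block its only invariant subspaces are the $\ker N^j=\Span(X_{n-j+1},\dots,X_n)$, whence $\h_0=\Span(X_{n-k+1},\dots,X_n)$ with $k:=\dim\h_0$ and $\dim\h=k+1$. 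Subtracting from $w$ its $\h_0$-component I may assume $w\in\Span(X_2,\dots,X_{n-k})$; and if $\h$ is proper then $k\le n-2$, so $2\le n-k\le n-1$. I claim $k\ge 1$ is impossible. Set $\lambda:=\<X_{n-k},Y_0\>/\<Y_0,Y_0\>$ and $U:=X_{n-k}-\lambda Y_0$; then $U\in\hp$, being orthogonal to $Y_0$ by construction and to $\h_0$ because $X_{n-k}$, $X_1$ and $w$ all are. A short computation in which the $\lambda[X_1,w]$- and $\lambda[w,X_1]$-terms cancel gives $[U,Y_0]=-X_{n-k+1}$, while $[U,X_{n-k+1}]$ lies in $\Span(X_{n-k+2})$ (with the convention $X_{n+1}=0$) and is therefore orthogonal to $Y_0=X_1+w$. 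Substituting $X=U$, $Y=Y_0$, $Z=X_{n-k+1}\in\h$ into the criterion gives $\<-X_{n-k+1},X_{n-k+1}\>+0=-1\ne 0$, a contradiction. Hence $k=0$ and $\dim\h=1\le n/2$ (note $n\ge 2$ whenever Case~2 arises).

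I expect Case~2 to be the heart of the matter: the structural point that $\h\cap\V$, being $N$-invariant, must be one of the tautological flag spaces $\Span(X_{n-k+1},\dots,X_n)$, together with the choice of $U$ for which $[U,Y_0]$ lands exactly on the generator $X_{n-k+1}$ of $\h_0$ while $[U,X_{n-k+1}]$ stays orthogonal to $Y_0$ — it is this built-in asymmetry that drives the dimension all the way down to $1$. On the $\h\sq\V$ side the only quantitative input is the spectral description of the path-graph adjacency matrix (simple eigenvalues, balanced signature, nullity at most $1$), which is precisely what fixes the bound at $n/2$ and shows that it is sharp.
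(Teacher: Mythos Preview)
Your proof is correct. Note, however, that the paper does not give its own proof of this theorem: it is quoted from Kerr--Payne \cite{KP}, so there is no in-paper argument to compare against directly. That said, your Case~2 is precisely the content (specialised to $L_n$) of the paper's Lemma~\ref{L:mn1}: in $L_n$ every element with nonzero $X_1$-component has maximal nilpotency, so a proper totally geodesic $\h$ of dimension $\ge 2$ cannot contain such an element, forcing $\h\sq\V$. Your construction of $U=X_{n-k}-\lambda Y_0$ and the test pair $(Y_0,X_{n-k+1})$ is exactly the mechanism used in the proof of Lemma~\ref{L:mn1} (written there with $Z=E_{k-1}-\frac{\<E_{k-1},Y'\>}{\|Y'\|^2}Y'$ and $E_k$).

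Your Case~1, on the other hand, is a self-contained argument not found in the present paper: you reduce the bound to a Witt-index computation for the symmetric form $S(Y,Z)=\<NY,Z\>+\<NZ,Y\>$ on $\V$, read off the balanced signature from the spectrum $2\cos(\pi k/n)$ of the path-graph adjacency matrix, and conclude that any totally isotropic subspace has dimension at most $\lfloor n/2\rfloor$. This is clean, gives sharpness for even $n$ immediately via $\Span(X_2,X_4,\dots,X_n)$, and is different in spirit from anything the paper proves about $L_n$; the closest the paper comes is Theorem~\ref{T:fili}, which needs the extra hypothesis that $\hp$ is $\h$-invariant.
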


\begin{remark}
The result of \cite[Theorem 6.4]{KP} is slightly more general that the one we have presented above. Let $C=\{c_2,\dots,c_{n-1}\}$ be an ordered list of nonzero reals, possibly with repetitions.
Let $L_C$ denote the  nilpotent filiform metric Lie algebra with orthonormal basis $\{X_1,\dots,X_n\}$, whose only nonzero relations are $[X_1,X_i]=c_iX_{i+1}$, for $i=2,\dots,n-1$. It is shown in \cite[Theorem 6.4]{KP}  that  if $\h$ is a totally geodesic proper subalgebra of $\g$, then $\dim (\h)\leq (\dim(\g))/2$.
In fact, it is easy to see that if $L_C$ possesses a totally geodesic  subalgebra of dimension $k$, then so too does the metric Lie algebra $L_n$. We give details in Remark \ref{R:C=L} in Section \ref{S:ex} below.
 \end{remark}

For arbitrary filiform nilpotent Lie algebras, the following result shows that the conclusion of the Kerr-Payne theorem also holds for a natural class of totally geodesic subalgebras.

\begin{theorem}\label{T:fili}
Suppose that $\g$ is a filiform nilpotent metric Lie algebra, $\h$ is a proper subalgebra of $\g$ and $\hp$ is $\h$-invariant. Then $\dim (\h)\leq (\dim(\g))/2$.
\end{theorem}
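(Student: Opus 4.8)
Here is the approach I would take. Write $n=\dim\g$ and $\m=\hp$, let $\g=\g^1\supseteq\g^2\supseteq\cdots$ be the lower central series, so $\g^2=[\g,\g]$ has codimension $2$ (as $\g$ is filiform) and $\g^{n-1}=\z(\g)=\Span(z)$ is a line; write $\pi_2\colon\g\to\g/\g^2$ and $\pi_\m\colon\g\to\m$ (projection along $\h$). Since $\m$ is $\h$-invariant, $\ad(Y)$ preserves $\h$ and $\m$ for every $Y\in\h$. Recall that $\ad(X)^{n-2}$ kills $\g^2$ and has image in $\Span(z)$, so it depends only on $X+\g^2$ and gives a homogeneous map $\g/\g^2\to\operatorname{Hom}(\g/\g^2,\Span(z))$ of degree $n-2$ that is nonzero (this is exactly the filiform condition); hence $X$ has maximal nilpotency precisely when $X+\g^2$ lies off a fixed finite union $\mathcal B$ of lines in $\g/\g^2$.

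\emph{Step 1: if $\h$ contains a maximally nilpotent element then $\dim\h\le1$.} For such $X_1$, $\ad(X_1)^{n-2}=\ad(X_1)^{n-2}|_\h\oplus\ad(X_1)^{n-2}|_\m$ is nonzero with image the line $\Span(z)$, so exactly one of the two restrictions is nonzero. If it were $\ad(X_1)^{n-2}|_\h$, then $\dim\h\ge n-1$, so $\dim\h=n-1$ by properness, so $z\in\h$ while $\m$ is a $1$-dimensional $\ad(X_1)$-invariant subspace, whence $\m\subseteq\ker\ad(X_1)=\Span(X_1,z)\subseteq\h$, contradicting $\m\cap\h=0$. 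So $\ad(X_1)^{n-2}|_\m\ne0$ and $\dim\m\ge n-1$. Consequently, if $\dim\h\ge2$ then $\h$ has no maximally nilpotent element, and then $\dim\pi_2(\h)\le1$ (if $\dim\pi_2(\h)=2$ then $\pi_2(\h)$ meets the complement of $\mathcal B$, producing a maximally nilpotent element of $\h$).

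\emph{Step 2 (the main point).} Assume $\dim\h\ge2$, hence $\dim\pi_2(\h)\le1$, so $\dim(\h\cap\g^2)=\dim\h-\dim\pi_2(\h)\ge\dim\h-1$; also $\pi_2(\h)+\pi_2(\m)=\g/\g^2$ forces $\dim\pi_2(\m)\ge1$, so $\dim(\m\cap\g^2)\le\dim\m-1=n-\dim\h-1$. Now suppose $\m$ contains a maximally nilpotent element $X_1$. For $Y\in\h$, $[X_1,Y]=-[Y,X_1]\in\m$ (since $[\h,\m]\subseteq\m$) and $[X_1,Y]\in\g^2$, so $\ad(X_1)(\h)\subseteq\m\cap\g^2$; moreover $\ker\ad(X_1)=\Span(X_1,z)$ is $2$-dimensional and contains $X_1\notin\h$, hence meets $\h$ in at most a line, so $\dim\ad(X_1)(\h)\ge\dim\h-1$. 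Combining, $\dim\h-1\le\dim(\m\cap\g^2)\le n-\dim\h-1$, i.e.\ $\dim\h\le n/2$. This is sharp: for $\g=L_n$ with $X_1$ a standard generator and $\h$ the even-index subalgebra one has $\h\subseteq\m^{\perp}$-type equality throughout, and $\dim\h=\lfloor n/2\rfloor$.

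\emph{Step 3: the degenerate case, and the main obstacle.} What remains is the case where $\m$, like $\h$, has no maximally nilpotent element; this forces $\dim\pi_2(\m)=1$ with $\pi_2(\m)$ and $\pi_2(\h)$ two distinct lines of $\mathcal B$. I expect this to be the hard part. One hopes to replace $\m$ by a different $\h$-invariant complement --- the set of such is a coset of $\operatorname{Hom}_\h(\g/\h,\h)$ --- whose $\pi_2$-image is not contained in $\mathcal B$, and then apply Step 2; failing that, one can try to rerun Step 2 with $X_1$ replaced by $\pi_\m(X_1)$ for a maximally nilpotent $X_1\in\g$ (one still gets $\ad(\pi_\m(X_1))(\h)\subseteq\m\cap\g^2$), the remaining task being to bound $\dim\{Y\in\h:[X_1,Y]\in\h\}$ using that $\ad(X_1)$ shifts the filtration $\g\supset\g^2\supset\cdots$ up by exactly one step while $\dim\pi_2(\h)\le1$ prevents $\h$ from lying too shallow. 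Making one of these arguments go through --- or finding a uniform argument that avoids the case split entirely --- is where the real difficulty lies.
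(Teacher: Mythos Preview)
Your Steps 1 and 2 are correct and essentially parallel the paper's Lemmas~\ref{L:mn1} and~\ref{L:mn2}. In fact your Step~1 is cleaner than Lemma~\ref{L:mn1}: you use directly that $\ad(X_1)$ preserves both $\h$ and $\m$ (a consequence of $\h$-invariance, not available for general totally geodesic $\h$), so that $\ad(X_1)^{n-2}$ splits as a direct sum and one summand must vanish. The paper's Lemma~\ref{L:mn1} instead works with the weaker totally geodesic condition~\eqref{L:TotGeod-cond} and is correspondingly more computational.

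Your Step~3 is, as you say, the real issue, and as written it is not a proof. Here is how the paper closes the gap. In the degenerate case $\g$ is irregular (so $\mathcal B$ consists of exactly the two lines $\{a_1=0\}$ and $\{a_1+\alpha a_2=0\}$), $n$ is even, and $\pi_2(\h),\pi_2(\m)$ are these two lines in some order. One now splits into two subcases. If $\pi_2(\m)$ is the line through $X_1-\tfrac1\alpha X_2$, choose $Z_1\in\m$ projecting to it; the crucial fact (implicit in Lemma~\ref{L:Maxnilp}) is that $\dim\ker\ad(Z_1)=3$, not $2$: one loses exactly one step of nilpotency. Rerunning your Step~2 with $Z_1$ in place of a maximally nilpotent element gives $\dim(\ker\ad(Z_1)\cap\h)\le 2$ (since $Z_1\in\ker\ad(Z_1)\setminus\h$), hence $2\dim\h\le n+1$, and evenness of $n$ finishes. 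If instead $\pi_2(\h)$ is the line through $X_1-\tfrac1\alpha X_2$, the paper does something different and more decisive: take $Z_2\in\h$ projecting to that line and $Z_1\in\m$ with $\pi_2(Z_1)\ne0$; then $[Z_2,Z_1]\in\m$ by $\h$-invariance and has $B$-degree $3$, and iterating $\ad(Z_2)$ (which still raises degree by one up through degree $n-1$) produces elements of $\m$ of every degree from $2$ to $n-1$, forcing $\dim\m\ge n-2$ and $\dim\h\le 2$.

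Your idea of replacing $\m$ by another $\h$-invariant complement is not obviously workable (one would need to produce $T\in\operatorname{Hom}_\h(\m,\h)$ with $\pi_2\circ T\ne 0$ on a suitable element, and there is no evident reason such $T$ exists), and your second idea---rerunning Step~2 with $\pi_\m(X_1)$---is exactly the first subcase above, but you are missing the two key observations that make it go through: the precise kernel dimension $3$, and the parity of $n$. The second subcase requires the separate ``propagate through $\m$ via $\ad(Z_2)$'' argument, which exploits $\h$-invariance in the opposite direction from Step~2.
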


By Proposition \ref{P:cod1}, filiform nilpotent metric Lie algebras do not possess totally geodesic subalgebras of codimension one.  In contrast to the Kerr-Payne theorem and Theorem \ref{T:fili}, we prove:

\begin{theorem}\label{T:cd2f}
For all $n\geq 3$, the standard filiform Lie algebra $L_n$ possesses an inner product relative to which $L_n$ has a totally geodesic subalgebra of codimension two.
\end{theorem}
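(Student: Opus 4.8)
The plan is to produce, for each $n\ge 3$, an explicit abelian subalgebra $\h$ of $L_n$ of codimension two, together with an inner product for which $\h$ is totally geodesic, the verification being carried out through the criterion of Lemma~\ref{L:basic}. Fix the standard basis $\{X_1,\dots,X_n\}$ with $[X_1,X_i]=X_{i+1}$, so that $[\g,\g]=\Span(X_3,\dots,X_n)$ and $N:=\ad(X_1)|_{[\g,\g]}$ is a single nilpotent Jordan block. Pick a linear functional $\ell$ on $[\g,\g]$ with $\ell(X_3)=1$, put $U:=\ker\ell$ (a hyperplane of $[\g,\g]$) and set $\h:=\R X_2\oplus U$. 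Since $X_2$ commutes with $[\g,\g]$ and $[\g,\g]$ is abelian, $\h$ is an abelian subalgebra, and $\dim\h=n-2$.

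For the inner product I would take $\hp:=\Span(X_1,X_3)$: because $\ell(X_3)\neq 0$ we have $[\g,\g]=U\oplus\R X_3$, whence $\g=\h\oplus\hp$, and the inner product on $\g$ is then pinned down by choosing inner products on $\h$ and on $\hp$ separately (and, as noted after Lemma~\ref{L:basic}, the choice on $\hp$ is irrelevant). By Lemma~\ref{L:basic}, $\h$ is totally geodesic for the chosen inner product if and only if the endomorphisms $A_X:=\pi_\h\circ\ad(X)|_\h$ of $\h$, for $X\in\hp$, are skew-symmetric; since $A_X$ is linear in $X$ it suffices to treat $X=X_1$ and $X=X_3$.

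These operators are easy to write down. One has $A_{X_3}=0$, because $X_3$ centralises $\h$; and $A_{X_1}(X_2)=\pi_\h(X_3)=0$ since $X_3\in\hp$, while for $u\in U$ one gets $A_{X_1}(u)=\pi_\h(Nu)=Nu-\ell(Nu)X_3$ (using $[\g,\g]=U\oplus\R X_3$). Thus $A_{X_1}$ vanishes on $\R X_2$ and restricts on $U$ to $B\colon u\mapsto Nu-\ell(Nu)X_3$, and the point is to choose $\ell$ so that $B$ is semisimple with purely imaginary spectrum. I would observe that $B$ is the restriction to the invariant subspace $U$ of the endomorphism $\widetilde B$ of $[\g,\g]$ given by $\widetilde B(w)=Nw-\ell(Nw)X_3$, that $\widetilde B$ acts as $0$ on $[\g,\g]/U$, and that the characteristic polynomial of $\widetilde B$ is $\lambda\,q(\lambda)$, where $q(\lambda)=\lambda^{n-3}+\ell_4\lambda^{n-4}+\dots+\ell_{n-1}\lambda+\ell_n$ with $\ell_j:=\ell(X_j)$ (its matrix in the basis $X_3,\dots,X_n$ is a companion matrix). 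Hence the characteristic polynomial of $B$ is $q$; and since the minimal polynomial of $B$ divides its characteristic polynomial yet is divisible by every irreducible factor of it, it equals $q$ as soon as $q$ is square-free. It therefore suffices to choose the $\ell_j$ so that $q(\lambda)=\prod_{k=1}^{r}(\lambda^2+k^2)$ when $n-3=2r$, and $q(\lambda)=\lambda\prod_{k=1}^{r}(\lambda^2+k^2)$ when $n-3=2r+1$ (with $q=1$, i.e.\ $U=0$ and $\h=\R X_2$, in the trivial case $n=3$). For such $q$, the operator $B$, and hence $A_{X_1}$, is semisimple with purely imaginary spectrum.

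Finally, any endomorphism of $\h$ that is semisimple with purely imaginary spectrum is skew-symmetric with respect to a suitable inner product: one splits $\h$ as the kernel of $A_{X_1}$ together with $A_{X_1}$-invariant $2$-planes on which $A_{X_1}$ acts as a rotation, declares these summands to be mutually orthogonal, and normalises the metric on each rotation plane so that $A_{X_1}$ becomes a rotation. With this inner product on $\h$ (and any inner product on $\hp$), Lemma~\ref{L:basic} shows that $\h$ is a totally geodesic subalgebra of $L_n$ of codimension two. I expect the third paragraph to be the crux: identifying $A_{X_1}|_U$ with a restriction of a companion matrix and reading off its minimal polynomial, and in particular checking that semisimplicity is not lost when $n$ is even, where $0$ lies in the spectrum of $B$ and the ambient operator $\widetilde B$ itself fails to be semisimple.
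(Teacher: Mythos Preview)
Your argument is correct and is a genuinely different construction from the paper's. The paper builds an explicit orthonormal basis $\{E_1,\dots,E_n\}$ of $L_n$, where $E_i$ is a specific linear combination of $X_i,X_{i+2},X_{i+4},\dots$ with binomial coefficients chosen so that $[E_1,E_i]=E_{i+1}+E_{i+3}+E_{i+5}+\cdots$; it then writes down the codimension-two subalgebra $\h=\Span(Y_2,\dots,Y_{n-2},Y_n)$ with $Y_i=E_i$ or $Y_i=E_i-E_{n-1}$ according to the parity of $n-i$, and verifies condition~\eqref{L:TotGeod-cond} case by case. Your route instead fixes the complement $\hp=\Span(X_1,X_3)$ and a hyperplane $U=\ker\ell\subset[\g,\g]$, observes that the only nontrivial operator to control is $A_{X_1}=\pi_\h\circ\ad(X_1)|_\h$, identifies its restriction to $U$ with (a restriction of) a cyclic operator whose characteristic polynomial $q(\lambda)=\lambda^{n-3}+\ell_4\lambda^{n-4}+\cdots+\ell_n$ can be prescribed freely by choosing $\ell$, and then picks $q$ with distinct purely imaginary roots so that $A_{X_1}$ becomes skew for a suitable inner product on $\h$.

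What each approach buys: the paper's construction is completely explicit---one can read off the orthonormal basis and the subalgebra---and this concreteness is exploited elsewhere (cf.\ the remark following the proof of Theorem~\ref{T:conv}, where the relations \eqref{E:cb1}--\eqref{E:cb2} are linked to the example). Your argument is more conceptual: it explains \emph{why} such inner products exist (the freedom in $\ell$ is exactly the freedom in the characteristic polynomial of $B$), it produces a whole family of examples rather than one, and it sidesteps the binomial-coefficient combinatorics entirely. Your caveat about the even-$n$ case is well placed but harmless: although the ambient cyclic operator $\widetilde B$ then has minimal polynomial divisible by $\lambda^2$ and so fails to be semisimple, the restriction $B=\widetilde B|_U$ has characteristic polynomial $q$, which you have arranged to be square-free, so $B$ (and hence $A_{X_1}=0\oplus B$) remains semisimple. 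One small stylistic point: you introduce $\ell$ before specifying $\ell_4,\dots,\ell_n$, so the exposition would read more smoothly if you announced at the outset that $\ell$ will be chosen later to force the spectrum of $B$.
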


Conversely we have:

\begin{theorem}\label{T:conv}
If a filiform nilpotent metric Lie algebra $\g$ of dimension $n$ possesses  a totally geodesic subalgebra of codimension two, then $\g$ is isomorphic to the standard filiform Lie algebra $L_n$.
\end{theorem}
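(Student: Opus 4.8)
The plan is to show that $\g$ possesses an abelian ideal of codimension one; for a filiform $\g$ this is equivalent to $\g\cong L_n$, since if $\mathfrak a$ is an abelian ideal of codimension one and $e_1\in\g\setminus\mathfrak a$, then $\ad(e_1)|_{\mathfrak a}$ is a nilpotent operator on the $(n-1)$-dimensional $\mathfrak a$ with $\ad(e_1)^{n-2}\neq0$, hence a single Jordan block, and a basis $e_2,\,[e_1,e_2],\,[e_1,[e_1,e_2]],\dots$ of $\mathfrak a$ realizes the defining relations of $L_n$. So fix an inner product on $\g$ and a totally geodesic subalgebra $\h$ of codimension two, write $\hp=\h^\perp$, and recall from Lemma~\ref{L:basic} that for every $X\in\hp$ the operator $\phi(X)=\pi_\h\ad(X)|_\h$ on $\h$ is skew-symmetric for the restricted inner product. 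Let $C^2\supseteq C^3\supseteq\cdots$ be the descending central series, $\dim C^i=n-i$ for $i\geq2$. I will use two facts repeatedly: a nilpotent operator that is skew-symmetric for a positive definite inner product vanishes; and for $Z\in\g$ of maximal nilpotency, $\ad(Z)|_{C^2}$ is a single Jordan block of size $n-2$, so that, writing $e_{i+1}=\ad(Z)(e_i)$ for a suitable basis, $C^i=\Span(e_{i+1},\dots,e_n)$ for $i\geq2$.

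First, $\h\neq C^2$: since $C^2$ is an ideal, $\phi(X)=\ad(X)|_{C^2}$ is nilpotent for each $X\in\hp$, hence $\phi(X)=0$ and $[\hp,C^2]=0$; as $\g=\hp\oplus C^2$, this gives $C^3=[\g,C^2]\subseteq[C^2,C^2]\subseteq C^4$, contradicting filiformity. The key step is then: \emph{$\h$ contains no element of maximal nilpotency}. Suppose $Z\in\h$ with $\ad(Z)^{n-2}\neq0$. Since $\h\neq C^2$, $\dim(\h\cap C^2)\in\{n-4,n-3\}$. If $\dim(\h\cap C^2)=n-4$, then $\h$ surjects onto $\g/C^2$, so we may choose $e_2\in\h$ with $\{Z,e_2\}$ projecting to a basis of $\g/C^2$; setting $e_1=Z$, $e_{i+1}=[e_1,e_i]$ gives an adapted basis with, inductively, all $e_i\in[\h,\h]\subseteq\h$, so $\h=\g$ -- a contradiction. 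If $\dim(\h\cap C^2)=n-3$, then $\h\cap C^2$ is an $\ad(Z)$-invariant subspace of $C^2$ of codimension one (as $[Z,\h\cap C^2]\subseteq[\h,\h]\cap C^3\subseteq\h\cap C^2$), hence equals $C^3$; so $\h=\R Z\oplus C^3=\Span(e_1,e_4,\dots,e_n)$ with $e_1=Z$. Choosing $e_2\in\hp\setminus(\h+C^2)$ (possible since $\hp\not\subseteq\h+C^2$) and completing an adapted basis, one gets $\hp=\Span(e_2,V)$ with $V=e_3+v$, $v\in\h$; a direct computation using $[e_i,e_j]\in C^{i+j-2}$ shows $\ad(V)$ preserves $\h$, so $\phi(V)=\ad(V)|_\h$ is nilpotent and skew-symmetric, hence $0$. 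But $\ad(V)(e_1)=[e_3,e_1]+[v,e_1]=-e_4+[v,e_1]$ with $[v,e_1]\in\ad(e_1)(C^3)=C^4$, so $\ad(V)(e_1)\neq0$ -- a contradiction.

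Since the terms of $\ad(h+c)^{n-2}$ involving $\ad(c)$ act trivially on $\g$ when $c\in C^2$, an element $h+c$ with $h\in\h,\,c\in C^2$ is of maximal nilpotency if and only if $h$ is; hence the hyperplane $\h+C^2$ contains no element of maximal nilpotency. In particular $\h+C^2\neq\g$, so $\dim(\h\cap C^2)=n-3$ and $\m:=\h+C^2$ is a codimension-one ideal of $\g$ (it contains $[\g,\g]$) with $\h\subseteq\m$ of codimension one. Restricting the inner product, $\h$ is totally geodesic of codimension one in the nilpotent metric Lie algebra $\m$, so Proposition~\ref{P:cod1} gives $\m=\h\oplus\R c$ with $c$ central in $\m$; hence $[\m,\m]=[\h,\h]$. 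Fix any $e_1$ of maximal nilpotency, so $\g=\R e_1\oplus\m$ and $\ad(e_1)\m\subseteq\m$; since $\ad(e_1)$ is a derivation, $[\h,\h]=[\m,\m]$ is $\ad(e_1)$-invariant and contained in $[\m,\m]\subseteq C^3$, while $C^4=[C^2,C^2]\subseteq[\m,\m]=[\h,\h]$. As the invariant subspaces of the single Jordan block $\ad(e_1)|_{C^2}$ are the $C^j$, the inclusions $C^4\subseteq[\h,\h]\subseteq C^3$ force $[\h,\h]\in\{0,C^3,C^4\}$.

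It remains to exclude $[\h,\h]=C^3$ and $[\h,\h]=C^4$. If $[\h,\h]=C^3$, then $C^3\subseteq\h$ and $\h=C^3\oplus\R f$ with $f\in\Span(e_2,e_3)$, so $[\h,\h]=[\R f,C^3]+[C^3,C^3]\subseteq\ad(f)(C^3)+C^6\subseteq C^4$ -- a contradiction. If $[\h,\h]=C^4$, then $C^4\subseteq\h$ and $\h/C^4$ is an abelian totally geodesic subalgebra of codimension two in the four-dimensional filiform algebra $\g/C^4\cong L_4$; applying Proposition~\ref{P:4D1Dtga} to $\g/C^4$, determining the possible $\h/C^4$, and feeding the resulting normalization of the structure constants of $\g$ back into the remaining totally geodesic conditions leads again to a contradiction. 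Hence $[\h,\h]=0$, so $\m$ is an abelian ideal of codimension one and $\g\cong L_n$. The step I expect to be the main obstacle is this last exclusion of $[\h,\h]=C^4$, where the filiform Jacobi identities and the totally geodesic conditions must be combined most carefully, presumably by passing to $\g/C^4\cong L_4$ and lifting; the other delicate points -- the adapted-basis computations in the key step (in particular that $\ad(V)$ preserves $\h=\R Z\oplus C^3$), the fact that $\h$ restricts to totally geodesic subalgebras of $\m$ and of $C^2$, and the base cases $n\leq4$ -- are routine.
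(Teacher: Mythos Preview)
Your overall strategy --- show that $\m:=\h+C^2$ is an abelian codimension-one ideal --- coincides with the paper's, and your reduction up to the point where $\m=\h\oplus\R c$ with $c\in\z(\m)$ is sound: your ``key step'' correctly reproves Lemma~\ref{L:mn1}, and your passage from ``$\h$ has no element of maximal nilpotency'' to ``$\h+C^2\neq\g$'' is valid (indeed $\ad(h+c)^{n-2}=\ad(h)^{n-2}$ for $c\in C^2$). The gap is in the final step, where you try to conclude $[\h,\h]=0$.

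Your assertion ``$C^4=[C^2,C^2]$'' is false: for the standard filiform $L_n$ one has $[C^2,C^2]=0$ while $C^4\neq0$ for $n\geq5$; in general $[C^2,C^2]\subseteq C^4$ with strict containment. What is actually true is only that $[\m,\m]=[\h,\h]$ is an ideal of $\g$ contained in $C^3$, hence equal to one of $0,C^3,C^4,\dots,C^{n-1}$. So after your (correct) exclusion of $C^3$, you are left with all the cases $[\h,\h]=C^j$ for $4\leq j\leq n-1$, not just $j=4$. Moreover, your sketch for $j=4$ does not produce a contradiction as stated: Proposition~\ref{P:4D1Dtga} explicitly exhibits two-dimensional totally geodesic subalgebras of $L_4$ (when $\beta=0$), so passing to $\g/C^4$ does not by itself forbid $\h/C^4$; the promised ``feeding back'' of constraints into the full $\g$ is precisely where the content lies, and it is not carried out. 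The paper circumvents this entire case analysis by invoking Lemma~\ref{L:cod2}(b): the smallest subalgebra $\a$ containing $\hp$ satisfies $\a\cap\h\subseteq\z(\h)$, and a degree analysis in an adapted orthonormal basis shows that $\a\cap\h$ (or $\Span(\a,E_n)\cap\h$) has codimension at most one in $\h$, which forces $\h$ to be abelian directly.
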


In fact, some filiform Lie algebras have the property that they only have  totally geodesic subalgebras of low dimension, regardless of the choice of inner product.

\begin{theorem}\label{T:dim6}
Consider the following $6$-dimensional filiform Lie algebra $\g$:
\begin{align*}
[X_1,X_i]&=X_{i+1},\quad {\text{for\ }} i=2,\dots,5,\\
[X_2,X_3]&=-X_6.
\end{align*}
For no choice of inner product, does $\g$ possess a totally geodesic subalgebra of dimension greater than two.
\end{theorem}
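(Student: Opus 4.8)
### Proof proposal for Theorem \ref{T:dim6}

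The plan is to assume that $\h$ is a totally geodesic subalgebra of $\g$ of dimension $k\geq 3$ and derive a contradiction. The first step is to exploit the structure of this particular $\g$: its lower central series is $\g\supset[\g,\g]=\Span(X_3,X_4,X_5,X_6)\supset\Span(X_4,X_5,X_6)\supset\Span(X_5,X_6)\supset\Span(X_6)\supset 0$, and crucially the centre is $\z(\g)=\Span(X_6)$, only one-dimensional, while $\ad(X_1)$ has maximal nilpotency and $\ad(X_2)$ has the single nonzero bracket $[X_2,X_3]=-X_6$. I would set up the totally geodesic condition (\ref{L:TotGeod-cond}) with respect to an arbitrary inner product; by choosing a convenient orthonormal adapted basis one reduces the number of free parameters. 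A useful preliminary reduction: by Proposition \ref{P:cod1} we already know $k\leq n-2=4$, so it suffices to rule out $k=3$ and $k=4$.

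The key steps, in order, are as follows. First I would record the general principle (following Kerr--Payne's method, Theorem \ref{T:KP}, and the ideas behind Theorem \ref{T:fili}) that for a filiform algebra, a totally geodesic subalgebra $\h$ must interact rigidly with the descending filtration $\g\supseteq[\g,\g]\supseteq\g^{(3)}\supseteq\cdots$; in particular one shows that $\dim(\h\cap\g^{(j)})$ cannot drop too fast, and that $\h$ cannot contain a ``generic'' element outside $[\g,\g]$ together with too much of $[\g,\g]$, because applying (\ref{L:TotGeod-cond}) with $X\in\hp$, $Y,Z\in\h$ and pushing $Y,Z$ down the filtration forces many bracket relations to vanish. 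Second, I would split into cases according to whether $\h\subseteq[\g,\g]$ or not. If $\h\subseteq[\g,\g]=\Span(X_3,\dots,X_6)$, then $\h$ is a subalgebra of the $4$-dimensional algebra $[\g,\g]$, whose only bracket is $[X_3,X_4]=X_5$ (note $[X_3,X_5]=[X_1,[X_2,X_3]]$-type terms vanish since $X_6$ is central and $[\g^{(3)},[\g,\g]]=0$) — so $[\g,\g]$ is itself a direct sum of a Heisenberg algebra $\Span(X_3,X_4,X_5)$ and the line $\Span(X_6)$; the totally geodesic subalgebras there of dimension $\geq 3$ are easily enumerated and each is ruled out by testing (\ref{L:TotGeod-cond}) against $X_1\in\g$ and $X_2\in\g$. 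Third, the main case: $\h\not\subseteq[\g,\g]$, so $\h$ contains a vector $Y$ with nonzero $X_1$- and/or $X_2$-component. Writing $Y=aX_1+bX_2+(\text{higher})$ with $(a,b)\neq 0$, the operator $\ad(Y)$ restricted to the filtration acts essentially as $a\cdot\ad(X_1)$ on $\Span(X_3,X_4,X_5)$ (shifting indices up) plus the $[X_2,X_3]=-X_6$ term; since $\h$ is a subalgebra, $[\h,\h]\subseteq\h$, and iterating $\ad(Y)$ on any second generator of $\h$ forces $X_5,X_6\in\h$ when $a\neq0$, which then cuts $\dim(\h)\cap[\g,\g]$ down enough that $\h$ is pinned to an explicit $3$- or $4$-dimensional subspace; one then checks (\ref{L:TotGeod-cond}) fails for a suitable $X\in\hp$. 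The case $a=0$, $b\neq0$ is handled similarly but more easily, since then $\ad(Y)$ is very degenerate.

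I would organize the parameter-counting so that at each branch only a bounded system of quadratic equations in the Gram-matrix entries of the inner product (or, equivalently, in the coefficients of an orthonormal basis adapted to $\h\oplus\hp$) remains, and show that system is inconsistent. The main obstacle I anticipate is precisely the case $\h\not\subseteq[\g,\g]$ with $\dim\h=4$ (codimension two): here one must combine the subalgebra constraint, the totally geodesic constraint, and the freedom in the inner product all at once, and Theorems \ref{T:cd2f} and \ref{T:conv} warn us that codimension-two totally geodesic subalgebras \emph{do} exist for $L_n$ — so the argument must genuinely use that $\g$ is \emph{not} isomorphic to $L_6$, i.e. that the extra bracket $[X_2,X_3]=-X_6$ is present. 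Concretely, the contradiction should come from the fact that this extra bracket destroys the special alignment between the even-indexed subalgebra and the filtration that makes the $L_n$ example work; making that precise — likely by showing that $\ad(X)|_\h$ for $X\in\hp$ cannot be skew-symmetric because the $X_6$-row is forced to be both zero (from skew-symmetry against a vector with no $X_6$-component) and nonzero (from the $[X_2,X_3]$ bracket feeding into $\h$) — is the crux of the proof.
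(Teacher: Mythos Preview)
Your proposal has two concrete errors and misses the paper's main reduction.

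First, you misread Theorem~\ref{T:conv}: it is not merely a warning but the tool that eliminates the codimension-two case outright. Since $\g$ is filiform and $\g\not\cong L_6$ (for instance, the codimension-one ideal $\g_2=\Span(X_2,\dots,X_6)$ is not abelian here, whereas it is for $L_6$), Theorem~\ref{T:conv} says $\g$ has no totally geodesic subalgebra of codimension two. Combined with Proposition~\ref{P:cod1}, only $\dim\h=3$ remains. You spend most of your outline on the $k=4$ case, which is already done.

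Second, your description of $[\g,\g]$ is wrong: in this algebra $[X_3,X_4]=0$, not $X_5$. The derived algebra $\Span(X_3,X_4,X_5,X_6)$ is abelian, not ``Heisenberg plus a line'', so your branch $\h\subseteq[\g,\g]$ collapses (and in any event that branch is subsumed once you work with the adapted basis below).

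The paper's argument for the surviving case $\dim\h=3$ is organised very differently from yours. It passes to the orthonormal basis $E_1,\dots,E_6$ obtained by Gram--Schmidt from $X_6,X_5,\dots,X_1$ (Remark~\ref{R:basis}); since $\g$ is \emph{regular} (here $[X_2,X_3]=-X_6\in\g_5$, so the $\alpha$ of Theorem~\ref{T:Ve} is zero), Remark~\ref{R:reg} and Lemma~\ref{L:mn1} force $E_1\in\hp$. One then writes $\hp=\Span(E_1,Z_1,Z_2)$ with $2\le\deg_E Z_1<\deg_E Z_2$ and runs a short case analysis on these degrees, repeatedly using the single observation that $E_i,E_{i+1}$ cannot both lie in $\h$ (else $\langle[E_1,E_i],E_{i+1}\rangle\ne0$ violates~\eqref{L:TotGeod-cond}). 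The bracket $[E_2,E_3]$ having nonzero $E_6$-component is what kills each branch, which is the role the relation $[X_2,X_3]=-X_6$ actually plays. Your plan of splitting on $\h\subseteq[\g,\g]$ versus not, and iterating $\ad(Y)$ to force $X_5,X_6\in\h$, does not give this control: without the adapted orthonormal basis and the $E_1\in\hp$ fact, you have no handle on how $\h$ sits relative to the filtration for an \emph{arbitrary} inner product.
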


Further results on filiform nilpotent metric Lie algebras are given in \cite{CHGN}, which is the sequel to the present paper.

The paper is organised as follows.  In Section \ref{S:prel} we establish the  results for general Lie algebras: Lemmas \ref{L:basic} and \ref{L:geo},  Propositions   \ref{P:any} and  \ref{P:comp}, and Theorem  \ref{T:somegeo}. In Section \ref{S:nil}  we prove Proposition \ref{P:heis} and  \ref{P:cod1}, for nilpotent Lie algebras, and we establish some general results that we use later in the paper: Lemma \ref{L:end}, Proposition \ref{P:nil} and  Lemma \ref{L:cod2}. In Section \ref{S:fa} we discuss  filiform Lie algebras. We state Vergne's theorem (Theorem \ref{T:Ve}), and after establishing some preliminary facts (Lemmas \ref{L:Maxnilp},  \ref{L:mn1} and \ref{L:mn2}),  we give the proofs of   Theorems \ref{T:fili} and \ref{T:conv}. Section \ref{S:ex} treats the examples of Proposition \ref{P:4D1Dtga}, and Theorems \ref {T:cd2f} and \ref{T:dim6}. Finally, in the Appendix we give a proof of Vergne's theorem that we used in Section \ref{S:fa}.

Throughout this paper, all Lie algebras will be assumed to have finite dimension, and be defined over the reals. If $\h$ is a subalgebra of a metric Lie algebra $\g$, we denote the orthogonal complement of $\h$ by $\hp$. We use the symbol $\oplus$ to denote the direct sum of Lie ideals. Given subsets $V_1,\dots,V_k$ of $\g$, we denote the vector subspace that they span by $\Span(V_1,\dots,V_k)$, and given elements $X_1,\dots,X_k$, we write $\Span(X_1,\dots,X_k)$ instead of $\Span(\{X_1,\dots,X_k\})$.
For a given basis $B=\{X_1,\dots,X_n\}$ of $\g$,  we write $\g_k=\Span( X_k,\dots,X_n)$.
For $Y\in\g$, we define the $B$-degree of $Y$, denoted $\deg_B(Y)$, to be the largest natural number $k$ such that $Y\in\g_k$, and  for convenience, we set $\deg_B(0)=\infty$.

\section{Preliminaries}\label{S:prel}

\begin{proof}[Proof of Lemma \ref{L:basic}]
If $Y,Z\in\h$, then as $\h$ is a subalgebra, $[Y,Z]\in\h$. Then, by definition, $\h$ is a totally geodesic subalgebra if and only if  for all $X\in\hp,Y,Z\in\h$ we have
\begin{align*}
0=2\langle \nabla_Y Z,X\rangle &=\langle [Y,Z],X\rangle +\langle [X,Y],Z\rangle +\langle [X,Z],Y\rangle  \\
&=\langle [X,Y],Z\rangle +\langle [X,Z],Y\rangle ,
\end{align*}
since $\langle [Y,Z],X\rangle =0$.
\end{proof}

\begin{proof}[{Proof of Proposition \ref{P:any}}]  Suppose that  $\h$  is a   codimension $k$  subalgebra  of $\g$ that is totally geodesic for every inner product on $\g$.  First we fix an orthogonal complement $\hp$ to $\h$, and we will vary the inner product on $\h$.  By Lemma \ref{L:basic}, as  $\h$  is totally geodesic for every inner product,  $ [X,Y]$ is perpendicular to $Y$, for all $X\in\hp, Y\in \h$ and for every inner product on $\h$. But if this holds  for every inner product on $\h$, then $[X,Y]$ must have zero component in $\h$; in other  words, $\hp$ is $\h$-invariant. Thus $\hp$ is $\h$-invariant for every choice of orthogonal complement $\hp$.

 Let $\{X_1,\dots,X_k\}$ be an arbitrary basis for $\hp$ and let $Y\in \h$. Since $\hp$ is $\h$-invariant, for $i=1,\dots,k$, we have $[X_i,Y]=\sum_{j =1}^ka_{ij}X_j$, for some constants $a_{ij}$. We now vary the orthogonal complement to $\h$. Choose constants $c_1,\dots,c_{k}$ and consider a new orthogonal complement spanned by
the vectors $c_1Y+X_1,c_2Y+X_2,\dots,c_{k}Y+X_{k}$. As this new orthogonal complement  is also $\h$-invariant, we have
\[
[c_iY+X_i,Y]=\sum_{j=1}^{k} b_{ij}(c_jY+X_j),
\]
for some constants $b_{ij}$.  Then as $[c_iY+X_i,Y]=[X_i,Y]$, the linear independence of $X_1,\dots,X_k,Y$ gives $b_{ij}=a_{ij}$ for all $1\leq i,j\leq k$, and $\sum_{j=1}^{k} a_{ij}c_j=0$ for all $1\leq i$. Since this is true for all choices of $c_j$, we have $a_{ij}=0$ for all $1\leq i,j\leq k$. Thus $[X_i,Y]=0$ for each $i$ and hence $[X,Y]=0$ for all $X\in\hp$. Now let $Z\in \h$ and consider a third orthogonal complement spanned by the basis $\{Z+X_1,X_2,\dots,X_k\}$; applying the above argument,  $[Z+X_1,Y]=0$, and so $[Z,Y]=0$. Hence $Y\in \z(\g)$, for all  $Y\in \h$. Thus $\h \subseteq  \z(\g)$.
\end{proof}

\def\proofname{Proof of Proposition \ref{P:comp}}
\begin{proof} First suppose that $\g$  has an inner product $\langle \cdot,\cdot\rangle$ for which every nonzero element is a geodesic. If $Y\in\g$, then as $Y$ is either zero or is a geodesic, we have   by Remark \ref{R:geo}, $\langle [X,Y],Y\rangle=0$ for all $X\in\g$. In particular, for all $X,Y,Z\in\g$ we have $\langle [X,Y+Z],Y+Z\rangle=\langle [X,Y],Y\rangle=\langle [X,Z],Z\rangle=0$, from which we obtain $\langle [X,Y],Z\rangle+\langle [X,Z],Y\rangle=0$; that is, $\ad(X)$ is skew-adjoint. It follows that the Riemannian metric determined by $\langle \cdot,\cdot\rangle$ on the  simply connected Lie group associated with $\g$ is bi-invariant \cite[Lemma 7.2]{Mi}, and consequently $\g$ is the direct sum of an abelian Lie algebra and a semisimple Lie algebra of compact type \cite[Lemma 7.5]{Mi}.

Conversely, if $\g$ is the direct sum of an abelian Lie algebra and a semisimple Lie algebra of compact type, then choose an inner product for which the two factors are orthogonal, and on the semisimple factor, take the inner product defined by the Killing form. For this inner product the adjoint maps $\ad(Y)$ are skew adjoint. In particular, $\langle [X,Y],Y\rangle=0$ for all $X,Y\in\g$. Consequently, from Remark \ref{R:geo}, $Y$ is a geodesic for all nonzero $Y\in\g$.
\end{proof}

\begin{proof}[{Proof of Lemma \ref{L:geo}}]
If $\g$ is a metric Lie algebra and $Y\in\g$ is a geodesic, then  by Remark \ref{R:geo}, for all $X\in\g$, the element $[X,Y]$ is perpendicular to $Y$, and so in particular,  $[X,Y]\not=Y$.
Conversely, if $Y\in \g $ is nonzero and $[X,Y]\not=Y$ for all $X\in\g$, then $Y$ does not belong to the image $\im(\ad(Y))$ of the adjoint map $\ad(Y)$. Choose an inner product on $\g$ for which $Y$ is orthogonal to  $\im(\ad(Y))$. Then by Remark \ref{R:geo}, $Y$ is a geodesic. \end{proof}

\begin{proof}[{Proof of Theorem \ref{T:somegeo}}]
Let $\g$ be a  Lie algebra of dimension $n$ with inner product $\langle \cdot,\cdot\rangle$. For each $Y\in\g$, let $\alpha_Y$ denote the linear 1-form on $\g$ defined by $\alpha_Y(X)= \langle [X,Y],Y\rangle$ for all $X\in \g$. By Remark \ref{R:geo}, if $Y$ is nonzero, then $Y$ is a geodesic if and only if $\alpha_Y=0$.  For each $Y\in\g$, let $f(Y)$ denote the vector dual to $\alpha_Y$; that is, $\langle f(Y),X\rangle=\alpha_Y(X)$ for all $X\in \g$.  Suppose that $\g$ has no geodesics; so $f(Y)\not=0$ for all nonzero $Y\in\g$. Restricting to the unit sphere $S^{n-1}$ in $\g$, consider the continuous function $\hat f:S^{n-1}\to S^{n-1}$ defined by $\hat f(Y)=f(Y)/\vert f(Y)\Vert$. Notice that for each $Y\in S^{n-1}$, we have $\langle f(Y),Y\rangle=\alpha_Y(Y)=\langle [Y,Y],Y\rangle=0$. Thus, for each $Y\in S^{n-1}$, the vectors $Y$ and $\hat f(Y)$ are orthogonal. Consequently the assignment $Y\mapsto \hat f(Y)$ defines a continuous nowhere zero vector field on $S^{n-1}$. This is impossible if $n$ is odd, because the Euler characteristic of the even dimensional spheres are nonzero (see for example, \cite{Br}). Our treatment of the case where $n$ is even uses related topological ideas. Notice that since for each $Y\in S^{n-1}$, the vectors $Y$ and $\hat f(Y)$ are orthogonal, $\hat f$ sends no point to its antipode. It follows that $\hat f$ is homotopic to the identity map, and in particular, $\hat f$ has degree 1 (see \cite[Corollary 4(a)]{Wh}). But by definition, we have $\hat f(-Y)=\hat f(Y)$. Thus the map $\hat f$ factors through the real projective space $P^{n-1}$, and consequently $\hat f$ has even degree (see \cite[Theorem 2]{Wh}). This is a contradiction; we conclude that $\g$ has a geodesic.
\end{proof}

\section{Nilpotent Lie algebras}\label{S:nil}

\begin{proof}[{Proof of Proposition \ref{P:heis}}]
First assume that condition (a) holds. By Lemma \ref{L:geodesic}, every element of $\g$ either lies in $\z(\g)$, or does not lie in $[\g,\g]$, and thus  $[\g,\g]\subseteq \z(\g)$.  In particular,  $\g$ is 2-step nilpotent.
Suppose that $X\in\g\backslash\z(\g)$ and that $Y\in[\g,\g]$ with $Y\not\in\im(\ad(X))$.
Choose an inner product on $\g$ for which $X,Y$ have unit length, $X$ is orthogonal to $\z(\g)$ and $Y$ is orthogonal to $\im(\ad(X))$. In particular, $X$ is orthogonal to the derived algebra $[\g,\g]$ and hence by  Lemma \ref{L:center}, $X$ is a geodesic. Extend $X,Y$ to an orthonormal basis $\{X,Y,Z_1,\dots,Z_{n-2}\}$ of $\g$. Since $X$ is a geodesic,  $[X,Z_i]$ is perpendicular to $X$ for each $i=1,\dots,n-2$. So, as  $Y$ is orthogonal to $\im(\ad(X))$, we have $[X,Z_i]\in\Span( Z_1, \dots,Z_{n-2})$, for each $i=1,\dots,n-2$. Now consider $X+Y$. The orthogonal complement to $X+Y$ is spanned by $X-Y,Z_1, \dots,Z_{n-2}$. As $Y$ belongs to the centre, we have $[X+Y,X-Y]=0$. Moreover,  for each $i=1,\dots,n-2$, we have $[X+Y,Z_i]=[X,Z_i]\in \Span( Z_1, \dots,Z_{n-2})$ and so  $[X+Y,Z_i]$ is orthogonal to $X+Y$. So for $\h=\Span( X+Y)$, we have that $\hp$ is $\h$-invariant, and hence $X+Y$ is a geodesic. So, by hypothesis,  $X+Y$ is either orthogonal to  $[\g,\g]$ or contained in  $\z(\g)$. But $X+Y$ does not belong to $\z(\g)$, since $Y\in\z(\g)$ and $X\not\in\z(\g)$, and $X+Y$ is not orthogonal to $[\g,\g]$ since $X$ is orthogonal to $[\g,\g]$ and $Y\in[\g,\g]$. Thus $\ad(X)$ is surjective onto $[\g,\g]$.

Conversely, suppose that $\g$ is 2-step nilpotent and for each $X\in\g$ with $X\not\in\z(\g)$, the adjoint map $\ad(X)$ is surjective onto $[\g,\g]$. Choose an inner product $\langle \cdot,\cdot\rangle$ on $\g$ and suppose that $X$ is a geodesic in $\g$ and that $X\not\in\z(\g)$. Let $X'$ denote the component of $X$ in $[\g,\g]$. By hypothesis, there exists $Y\in\g$ with $[X,Y]=X'$. Since $X$ is a geodesic, we have $\langle[X,Y],X\rangle=0$, by Remark \ref{R:geo}, and hence $X'$ is orthogonal to $X$. But this gives $X'=0$, and so $X$ is orthogonal to $[\g,\g]$.
 \end{proof}

Suppose that $\g$ is a Lie algebra with an inner product $\langle \cdot,\cdot\rangle$ for which $\h$ is a totally geodesic subalgebra.  Let $\pi: \g\to \h$ and $\pip: \g\to \hp$ be the orthogonal projections. Consider the map $\psi: \h\to \End(\hp)$ defined by $\psi(Y): X\mapsto \pip[Y,X]$. We will show below that $\psi$ is a Lie algebra homomorphism. If $\h$ is nilpotent, then its image  $\psi(\h)$ is a nilpotent subalgebra. Moreover, a Lie algebra is nilpotent if and only if the adjoint map of each element is a nilpotent map. However, it is not in general true that if $\h$ is nilpotent and $V$ is a vector space, and if $f:\h\to \End(V)$ is a Lie algebra representation, then the elements $f(Y)$ are necessarily nilpotent maps for all $Y\in\h$; cf.~Zassenhaus' result \cite[p.~41]{Ja}. Nevertheless, in our particular case, we have:

\begin{lemma}\label{L:end}
The function $\psi$ is a Lie algebra homomorphism and if $\g$ is nilpotent, then  $\psi$ is a nilpotent representation of $\h$ on $\hp$.
\end{lemma}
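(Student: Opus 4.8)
The plan is to verify the homomorphism property by a direct computation and then handle nilpotency of the representation using the nilpotency of $\g$ together with the totally geodesic hypothesis.

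\textbf{Step 1: $\psi$ is a homomorphism.} For $Y_1,Y_2\in\h$ and $X\in\hp$, I would compute $\psi([Y_1,Y_2])(X)=\pip[[Y_1,Y_2],X]$ and compare it with $(\psi(Y_1)\psi(Y_2)-\psi(Y_2)\psi(Y_1))(X)$. Using the Jacobi identity, $[[Y_1,Y_2],X]=[Y_1,[Y_2,X]]-[Y_2,[Y_1,X]]$. Writing $[Y_2,X]=\pi[Y_2,X]+\pip[Y_2,X]$, the term $[Y_1,\pi[Y_2,X]]$ lies in $\h$ (since $\h$ is a subalgebra), so it contributes nothing after applying $\pip$; hence $\pip[Y_1,[Y_2,X]]=\pip[Y_1,\pip[Y_2,X]]=\psi(Y_1)\psi(Y_2)(X)$. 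Symmetrically for the other term, and this gives the bracket relation. (Notice this step does not even use the totally geodesic hypothesis.)

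\textbf{Step 2: nilpotency of each $\psi(Y)$.} Here I would use the totally geodesic condition crucially, via Lemma~\ref{L:basic}: for $Y\in\h$, $X\in\hp$, and $Z\in\h$ we have $\langle[X,Y],Z\rangle+\langle[X,Z],Y\rangle=0$, equivalently $\langle\phi(X)(Y),Z\rangle=-\langle\phi(X)(Z),Y\rangle$ where $\phi(X)\in\mathfrak{so}(\h)$. The key observation is the adjoint relationship between $\psi$ and $\phi$: for $Y,Z\in\h$ and $X\in\hp$, one has $\langle\psi(Y)(X),?\rangle$ — more precisely, I want to relate $\pip[Y,X]$ to $\pi[X,?]$. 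Since $\ad(Y)$ on $\g$ is nilpotent (as $\g$ is nilpotent), I would argue that $\psi(Y)=\pip\circ\ad(Y)|_{\hp}$ is nilpotent by showing that a suitable flag is preserved. The cleanest route: let $\g\supset \g^{(1)}\supset\cdots$ be the lower central series; then $\ad(Y)$ maps each term into the next. I would show that $\psi(Y)$ maps $\hp\cap(\h+\g^{(k)})$ into $\hp\cap(\h+\g^{(k+1)})$, or work with the filtration induced on $\hp$ by intersecting with the lower central series of $\g$ and using that $\pip$ does not increase a suitable degree. Since the lower central series terminates, $\psi(Y)$ is nilpotent.

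\textbf{Step 3: nilpotency of the representation.} Once every $\psi(Y)$ is a nilpotent endomorphism and $\psi(\h)$ is a Lie subalgebra of $\End(\hp)$ consisting of (elements whose images under the homomorphism are) nilpotent maps — actually $\psi(\h)$ itself is a Lie algebra of nilpotent endomorphisms — Engel's theorem applies: $\psi(\h)$ is a nilpotent Lie algebra and there is a basis of $\hp$ in which all $\psi(Y)$ are strictly upper triangular, so $\psi$ is a nilpotent representation.

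\textbf{Main obstacle.} Step~1 is routine. The real content is Step~2: showing each $\psi(Y)$ is nilpotent. The subtlety (flagged in the paragraph preceding the lemma via the reference to Zassenhaus) is that a nilpotent Lie algebra can have non-nilpotent representations, so nilpotency of $\h$ alone is insufficient — we genuinely need to exploit that $\psi(Y)$ arises as a ``corner'' of the genuinely nilpotent operator $\ad(Y)$ on $\g$, controlled by the totally geodesic condition, and find the right invariant filtration on $\hp$. I expect the cleanest argument uses the filtration $\hp_k:=\pip(\g^{(k)})$ or $\hp\cap(\g^{(k)}+\h)$ and a short induction showing $\psi(Y)\hp_k\subseteq\hp_{k+1}$; getting that inclusion exactly right (and checking it actually decreases, eventually reaching $0$) is the one place a careful argument is needed.
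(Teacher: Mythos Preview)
Your Step~1 is correct and matches the paper exactly.

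For Step~2, your filtration argument is correct: with $\hp_k:=\hp\cap(\h+\g^{(k)})$, if $X=H+G$ with $H\in\h,\ G\in\g^{(k)}$, then $[Y,X]=[Y,H]+[Y,G]\in\h+\g^{(k+1)}$, and subtracting $\pi_\h[Y,X]\in\h$ keeps you in $\h+\g^{(k+1)}$, so $\psi(Y)\hp_k\subseteq\hp_{k+1}$. However, your claim that the totally geodesic condition is used ``crucially'' here is mistaken: notice that the computation above uses only that $\h$ is a subalgebra and that $\g$ is nilpotent. The lemma in fact holds for \emph{any} subalgebra $\h$ of a nilpotent $\g$, with the totally geodesic hypothesis playing no role. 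Your detour through $\phi(X)\in\mathfrak{so}(\h)$ is a red herring.

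The paper's argument for Step~2 is different and somewhat slicker: it proves the identity $(\psi(Y))^k(X)=\pip\bigl((\ad Y)^k X\bigr)$ directly by induction on $k$. The inductive step is
\[
(\psi(Y))^k(X)=\pip[Y,\pip(\ad Y)^{k-1}X]=\pip[Y,(\ad Y)^{k-1}X]=\pip(\ad Y)^k X,
\]
where the middle equality holds because $[Y,\pi_\h W]\in\h$ for any $W$ (again, only the subalgebra property). Since $(\ad Y)^k=0$ for large $k$, nilpotency follows immediately. This avoids filtrations altogether and makes transparent that $\psi(Y)$ is just the ``$\hp$-corner'' of the nilpotent operator $\ad(Y)$.

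Your Step~3 (Engel) is fine but not strictly needed: the paper takes ``nilpotent representation'' to mean each $\psi(Y)$ is a nilpotent endomorphism, which is what Step~2 already gives.
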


\def\proofname{Proof}
\begin{proof}
Let $X\in\hp,Y,Z\in\h$. We have
\begin{align*}
\psi([Y,Z])(X)&=\pip[[Y,Z],X]\\
&=\pip([Y,[Z,X]] - [Z,[Y,X]])  &(\text{by Jacobi})\\
&=\pip[Y,\psi(Z)X]-\pip[Z,\psi(Y)X]  &(\text{as $\h$ is a subalgebra})\\
&=(\psi(Y)\circ\psi(Z)-\psi(Z)\circ \psi(Y))(X).
\end{align*}
So $\psi$ is a Lie algebra representation. If $\g$ is nilpotent, then to see that the map $\psi(Y)$ is nilpotent, it suffices to show that for all $k\in\N$, one has $(\psi(Y))^k(X)=\pip (\ad(Y))^k(X)$. We have
\begin{align*}
(\psi(Y))^k(X)&=\pip[Y,(\psi(Y))^{k-1}(X)]      &(\text{by definition})\\
&=\pip[Y,\pip (\ad(Y))^{k-1}(X)]    &(\text{by the inductive hypothesis})\\
&=\pip[Y,(\ad(Y))^{k-1}(X)]    &(\text{as $\h$ is a subalgebra})\\
&=\pip (\ad(Y))^k(X),
\end{align*}
as claimed.
\end{proof}

\begin{proposition}\label{P:nil}
Suppose that $\g$ is a Lie algebra and consider the natural quotient map $p:\g\to\g/ [\g,\g]$. If $\g$ is nilpotent, then $\g$ has no proper subalgebra $\a$ with $p(\a)=\g/[\g,\g]$.
\end{proposition}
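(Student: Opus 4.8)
The plan is to argue by induction on $\dim \g$, or equivalently on the nilpotency step. Suppose, for contradiction, that $\a$ is a proper subalgebra of the nilpotent Lie algebra $\g$ with $p(\a) = \g/[\g,\g]$. The hypothesis $p(\a) = \g/[\g,\g]$ says precisely that $\g = \a + [\g,\g]$. First I would observe that this condition propagates down the lower central series: writing $\g^{(1)} = [\g,\g]$, $\g^{(k+1)} = [\g, \g^{(k)}]$, I claim $\g = \a + \g^{(k)}$ for every $k$. Indeed, if $\g = \a + \g^{(k)}$, then $\g^{(k)} = [\g, \g^{(k-1)}] = [\a + \g^{(k-1)}, \g^{(k-1)}] \subseteq [\a, \g^{(k-1)}] + \g^{(k)}$; feeding $\g^{(k-1)} = [\g,\g^{(k-2)}]$ in and expanding, one reduces $\g^{(k)}$ modulo $\g^{(k+1)}$ to terms each of which has at least one entry in $\a$. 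Pushing this through carefully (an induction within the induction, tracking which bracket slots lie in $\a$), one gets $\g^{(k)} \subseteq \a + \g^{(k+1)}$, hence $\g = \a + \g^{(k)} = \a + \g^{(k+1)}$. Since $\g$ is nilpotent, $\g^{(k)} = 0$ for $k$ large, whence $\g = \a$, contradicting properness.

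An alternative, cleaner route avoiding the bracket bookkeeping: quotient out. Let $\z$ be a nonzero central ideal of $\g$ contained in the last nonzero term of the lower central series (so in particular $\z \subseteq [\g,\g]$, assuming $\g$ is nonabelian — the abelian case is trivial since then $[\g,\g]=0$ and $p(\a)=\g$ forces $\a = \g$). Consider $\bar\g = \g/\z$ and the image $\bar\a$ of $\a$ in $\bar\g$. Then $\bar\g$ is nilpotent of smaller dimension, $[\bar\g,\bar\g] = [\g,\g]/\z$, and $\bar\a + [\bar\g,\bar\g] = \bar\g$ since $\a + [\g,\g] = \g$. By the inductive hypothesis, $\bar\a$ is not a proper subalgebra of $\bar\g$, so $\bar\a = \bar\g$, i.e. $\a + \z = \g$. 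But $\z \subseteq [\g,\g]$, so in fact we may now directly re-run the argument: $\g = \a + \z$ with $\z$ a one-dimensional (WLOG) central subspace. Then $[\g,\g] = [\a+\z,\a+\z] = [\a,\a] \subseteq \a$ (using $\z$ central), so $\z \subseteq [\g,\g] \subseteq \a$, and hence $\g = \a + \z = \a$, the desired contradiction.

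The main obstacle I anticipate is organizing the induction cleanly so that the base case and the reduction step fit together without circularity: the reduction to $\bar\g$ is valid only if $\bar\g$ is genuinely smaller, which is guaranteed by choosing $\z \neq 0$, and this is possible exactly because $\g$ is nilpotent and nonabelian. One must also handle the abelian case separately (there it is immediate). The only mildly delicate point is confirming that $[\bar\g,\bar\g]$ really equals the image of $[\g,\g]$ — this is standard, as taking a quotient commutes with forming the derived algebra. I would present the second route, since it is shorter and sidesteps the multi-layer bracket induction of the first.
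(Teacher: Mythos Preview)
Your second route (induction on $\dim\g$ via the quotient by a central ideal $\z$ contained in the last nonzero term of the lower central series) is correct and complete: once the inductive hypothesis gives $\a+\z=\g$, centrality of $\z$ forces $[\g,\g]=[\a,\a]\subseteq\a$, so $\z\subseteq[\g,\g]\subseteq\a$ and hence $\a=\g$. This is a genuinely different argument from the paper's. The paper does not induct on dimension; instead it works directly with the lower central series of both $\g$ and $\a$, proving $C_i(\g)=\Span(C_i(\a),C_{i+1}(\g))$ for all $i$ by a forward induction, and then peeling backward from the vanishing term $C_{N+1}(\g)=0$ to conclude $C_0(\g)=C_0(\a)$. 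Your approach is arguably more conceptual (it isolates the key structural fact that a nilpotent algebra has nontrivial centre inside its derived algebra), while the paper's is more self-contained and makes the role of the nilpotency length explicit in a single pass.

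Your first route, as sketched, is heading toward the paper's argument but is not yet right: saying that the bracket terms ``each have at least one entry in $\a$'' does not place them in $\a$, since $\a$ is only a subalgebra, not an ideal. What actually makes that route work is the stronger claim $C_i(\g)\subseteq C_i(\a)+C_{i+1}(\g)$ (all entries in $\a$, modulo the next term), which requires the fact $[C_j(\g),C_k(\g)]\subseteq C_{j+k+1}(\g)$ to push the mixed terms down. If you had filled that in, you would have recovered the paper's proof. Your instinct to present the second route instead is sound.
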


\begin{proof} Let $C_i(\g)$ be the lower central series for $\g$, that is, $C_0(\g)=\g$ and $C_i(\g)=[\g,C_{i-1}(\g)]$, for $i \ge 1$, and let $C_i(\a)$ be the
lower central series for $\a$. Seeking a contradiction, suppose that such an algebra $\a$ exists, that is, $\g=\Span(\a,C_1(\g))$. So
$C_1(\g)=[\Span(\a,C_1(\g)), \Span(\a,C_1(\g))] \sq \Span(C_1(\a),C_2(\g))$, and then by induction, $C_i(\g) \sq \Span(C_i(\a),C_{i+1}(\g))$. As the
opposite inclusion is obvious, we have $C_i(\g) =  \Span(C_i(\a),C_{i+1}(\g))$, for all $i \ge 0$. Let $N > 0$ be such that $C_N(\g) \ne 0 = C_{N+1}(\g)$.
Then $C_N(\g) = C_N(\a)$, so $C_{N-1}(\g) =  \Span(C_{N-1}(\a),C_{N}(\g))=\Span(C_{N-1}(\a),C_{N}(\a))=C_{N-1}(\a)$, and then by induction, $C_0(\g)=C_0(\a)$, a contradiction.
\end{proof}

\begin{proof}[{Proof of Proposition  \ref{P:cod1}}]
Suppose that $\h$ has codimension 1. Let $X$ be a unit vector with $\hp=\Span(X)$. Consider the map $\psi : \h\to \End(\hp)$ defined above. For each $Y\in\h$, Lemma \ref{L:end} says that $\psi(Y)$ is nilpotent and consequently, as $\dim( \hp)=1$, we have $\psi(Y)=0$. Hence $\psi\equiv 0$, and thus $\h$ is an ideal of $\g$. In particular, $\ad(X) :\h \to\h$ is a Lie algebra derivation, and since $\h$ is nilpotent, so is $\ad(X)$. It remains to show that $\ad(X)\equiv 0$.
Arguing by contradiction, assume that $\ad(X)(Y)\neq 0$ for some $Y\in\h$. As $\ad(X)$ is  nilpotent, and we can choose $Y$ such that $\ad(X)(Y)\neq 0$  and  $\ad(X)(\ad(X)(Y))=0$. Let $Z=\ad(X)(Y)$.
Then
\begin{align*}
\langle [X,Y],Z\rangle +\langle [X,Z],Y\rangle &=\langle \ad(X)(Y),Z\rangle +\langle \ad(X)(Z),Y\rangle\\
& =\|Z\|^2+0\neq 0,
\end{align*}
which is in contradiction with condition \eqref{L:TotGeod-cond}. Thus $\ad(X)\equiv 0$ and hence $\hp$ is an ideal. So $\g=\h\oplus\hp\cong \h\oplus \R$.
\end{proof}

\begin{lemma}\label{L:cod2}
Suppose that $\g$ is a nilpotent metric Lie algebra and $\h$ is a totally geodesic subalgebra of $\g$ of codimension two.
\begin{enumerate}[\rm (a)]
\item Let $\{X_1,X_2\}$ be a  basis for $\hp$. Then
\begin{enumerate}[\rm (i)]
\item Up to nonzero factor, the vector $[X_1,X_2]$ is independent of $X_1,X_2$,
\item $[X_1,X_2]\in \h$,
\item $[X_1,X_2]\in \z(\h)$.
\end{enumerate}
\item Let $\a$ be the intersection of the subalgebras of $\g$ that contain $\hp$. Then $\a\cap \h\subseteq \z(\h)$.
\end{enumerate}
\end{lemma}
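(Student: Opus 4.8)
The plan is to prove the three statements of (a) and then deduce (b).

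Part (a)(i) is just bilinearity and antisymmetry of the bracket: for another basis $X_i'=\sum_jc_{ij}X_j$ of $\hp$ one has $[X_1',X_2']=\det(c_{ij})[X_1,X_2]$ with $\det(c_{ij})\neq0$, so it is enough to establish (ii) and (iii) for one convenient basis of $\hp$. For (ii) I would take $\{X_1,X_2\}$ orthonormal and extend it to an orthonormal basis $\{X_1,X_2,h_1,\dots,h_m\}$ of $\g$ with all $h_i\in\h$. As $\g$ is nilpotent, $\ad(X_1)$ and $\ad(X_2)$ are nilpotent and so trace-free. Computing $\tr\ad(X_j)$ in this basis, the terms from the $h_i$ add up to $\sum_i\<[X_j,h_i],h_i\>=\sum_i\<\pi_\h[X_j,h_i],h_i\>=\tr\phi(X_j)=0$, the last equality because $\phi(X_j)\in\mathfrak{so}(\h)$ by Lemma~\ref{L:basic}; and $[X_j,X_j]=0$. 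Hence $0=\tr\ad(X_1)=\<[X_1,X_2],X_2\>$ and $0=\tr\ad(X_2)=-\<[X_1,X_2],X_1\>$, so $[X_1,X_2]\perp\hp$, that is, $[X_1,X_2]\in\h$.

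For (iii) I would instead exploit the representation $\psi\colon\h\to\End(\hp)$, which is nilpotent by Lemma~\ref{L:end}. By Engel's theorem there is a basis $\{X_1,X_2\}$ of $\hp$ with $\psi(Y)X_1=0$ and $\psi(Y)X_2\in\Span(X_1)$ for all $Y\in\h$. From $\psi(Y)X_1=\pip[Y,X_1]=0$ we get $[X_1,\h]\sq\h$, so $\k:=\h+\Span(X_1)$ is a subalgebra and $\h$ is totally geodesic in $\k$ of codimension one (this is \eqref{L:TotGeod-cond} with $X=X_1$). Since $\k$ is nilpotent, Proposition~\ref{P:cod1} gives $[X_1,\h]=0$, so $X_1\in\z(\k)$. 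Now for $Y\in\h$, using $[X_1,Y]=0$ and the Jacobi identity, $[[X_1,X_2],Y]=[X_1,[X_2,Y]]$; writing $[X_2,Y]=\pi_\h[X_2,Y]+cX_1$ (legitimate since $\pip[X_2,Y]=-\psi(Y)X_2\in\Span(X_1)$) and using $[X_1,\h]=0$, this is $0$. Thus $[X_1,X_2]$ commutes with $\h$, and combined with (ii) this yields $[X_1,X_2]\in\z(\h)$.

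For (b) I would keep the basis of (iii). The crucial observation is that $\k=\h+\Span(X_1)$ is an ideal of $\g$: it is $\ad(X_1)$- and $\ad(\h)$-invariant because $[X_1,\k]=0$, $[\h,\h]\sq\h$, $[\h,X_1]=0$; and $\ad(X_2)$-invariant because $[X_2,X_1]=-[X_1,X_2]\in\h\sq\k$ (by (ii)) and $[X_2,Y]=\pi_\h[X_2,Y]-\psi(Y)X_2\in\h+\Span(X_1)=\k$ for $Y\in\h$. Consequently $M:=\z(\k)$, being a characteristic ideal of the ideal $\k$ (each $\ad(X)|_\k$, $X\in\g$, is a derivation of $\k$ and hence preserves $\z(\k)$), is an ideal of $\g$; explicitly $M=\z(\h)+\Span(X_1)$. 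By (iii), $[X_1,X_2]\in\z(\h)\sq M$, so the image of $\a$ in $\g/M$ is a subalgebra generated by the commuting elements $X_1+M$ and $X_2+M$, hence abelian; that is, $[\a,\a]\sq M$. Finally $\a=\hp+[\a,\a]$ (the right-hand side is a subalgebra containing $\hp$), so any $U\in\a\cap\h$ is $U=u+w$ with $u\in\hp$ and $w\in[\a,\a]\sq\z(\h)+\Span(X_1)$; since $U\in\h$ while $\pip w\in\Span(X_1)$, the $\hp$-component $u+\pip w$ of $U$ vanishes, so $U=\pi_\h w\in\z(\h)$. Hence $\a\cap\h\sq\z(\h)$.

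The computations in (ii) and the Jacobi step in (iii) are routine. The part I expect to be the main obstacle is the set-up shared by (iii) and (b): producing the good basis from $\psi$ via Engel's theorem and then upgrading $\k$ from a subalgebra to an \emph{ideal} of $\g$ — which uses precisely (a)(ii) and the property $\psi(Y)X_2\in\Span(X_1)$ of that basis — after which reduction modulo the ideal $M=\z(\k)$ disposes of (b).
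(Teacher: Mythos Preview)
Your proof is correct, and the overall structural picture you reach --- a codimension-one ideal containing $\h$ as a Lie direct summand --- coincides with the paper's, but you arrive there by a genuinely different route and then finish (b) differently.

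For (a)(ii) the paper first constructs a codimension-one ideal $\m\supseteq\h$ using Proposition~\ref{P:nil} (that no proper subalgebra of a nilpotent $\g$ surjects onto $\g/[\g,\g]$), takes $X_1\perp\m$, $X_2\in\m\cap\hp$, and only then invokes unimodularity. Your direct trace computation in an arbitrary orthonormal basis of $\hp$, using $\phi(X_j)\in\mathfrak{so}(\h)$ from Lemma~\ref{L:basic}, is shorter and avoids building $\m$ altogether.

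For (a)(iii) and (b) the paper works with the pair $(\m,X_2)$ above, gets $X_2\in\z(\m)$ from Proposition~\ref{P:cod1}, and for (b) describes $\a$ explicitly as an ascending union $V=\Span(V_i)$ with $V_{i+1}=\Span([X_1,Y]:Y\in V_i)$, proving inductively that $V_i\subseteq\z(\m)$ for $i\ge 1$. You instead apply Engel's theorem to the nilpotent representation $\psi$ of Lemma~\ref{L:end} to produce $X_1$ with $\psi(\h)X_1=0$, set $\k=\h+\Span(X_1)$, obtain $X_1\in\z(\k)$ from Proposition~\ref{P:cod1}, and then \emph{verify directly} that $\k$ is an ideal (using (a)(ii) for $[X_2,X_1]$ and the Engel flag for $[X_2,\h]$). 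This lets you pass to $\g/\z(\k)$, where the image of $\a$ is generated by a single element and hence abelian, giving $\a\subseteq\hp+\z(\h)$ at once. Your quotient argument is cleaner than the paper's inductive description of $\a$; the paper's approach has the advantage of exhibiting $\a$ concretely. Note that your last paragraph can be streamlined: once $[\a,\a]\subseteq M=\z(\h)+\Span(X_1)$, you have $\a=\hp+[\a,\a]\subseteq\hp+\z(\h)$, and intersecting with $\h$ gives the result without decomposing $w$.
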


Notice that in the above lemma, part (a)(ii) is redundant, since it follows immediately from part (a)(iii). We include part (a)(ii) to emphasise this useful fact.

\begin{proof}(a)(i). If $\{X_1',X_2'\}$ is another  basis for $\hp$, then
\[
\begin{pmatrix}
X_1'\\X_2'
\end{pmatrix}
=
A\begin{pmatrix}
X_1\\X_2
\end{pmatrix}
\]
for some $2\times 2$ nonsingular matrix $A$. Hence $[X_1',X_2']=\det(A) [X_1,X_2]$.

By part (a)(i) we may choose $X_1,X_2$ to simplify the calculations in the rest of this lemma. Consider the natural projection map $p:\g\to \g/[\g,\g]$. By Proposition \ref{P:nil}, we have $p(\h)\not=\g/[\g,\g]$. Choose a codimension one vector subspace $\b$ of $\g/[\g,\g]$ containing $p(\h)$. Note that $\b$ is an ideal of the abelian Lie algebra $\g/[\g,\g]$. Let $\m$ denote the preimage by $p$ of $\b$, so $\m$ is a codimension one  ideal of $\g$ and $\m$ contains $\h$ as codimension one subalgebra.  Let $X_1\in \g$ be a nonzero vector orthogonal to $\m$, and let  $X_2\in \m$  be a nonzero vector orthogonal to $\h$.  Note that $\h$ is a codimension one totally geodesic subalgebra of $\m$, so by Proposition \ref{P:cod1}, $\m$ is the direct sum of Lie ideals, $\m=\h\oplus \Span(X_2)$.

(a)(ii). Since $\h$ is totally geodesic,  Lemma \ref{L:basic} gives $\langle [X_1,Y],Y\rangle=0$ for each $Y\in \h$. Since $\g$ is nilpotent and therefore unimodular, $\tr(\ad(X_1))=0$, and so  $\langle [X_1,X_2],X_2\rangle=0$.  Since $\m$ is an ideal  $[X_1,X_2]\in \m$. Thus $[X_1,X_2]\in \h$.

(a)(iii). Because of the direct sum structure, we have $[X_2,Y]=0$ for all $Y\in \h$. Let $Y\in \h$.  So the Jacobi identity gives $[[X_1,X_2],Y]=[[X_1,Y],X_2]$. Since $\m$ is an ideal  $[X_1,Y]=\alpha X_2+ Z$ for some $\alpha\in\R,Z\in \h$. Thus $[[X_1,Y],X_2]=[\alpha X_2+ Z,X_2]=[Z,X_2]=0$.

(b). Define  subspaces $V_i$ of $\g$ inductively by posing $V_0=\Span(X_1,X_2),V_1=\Span([X_1,X_2])$ and $V_{i+1}=\Span([X_1,Y]: Y\in V_i)$ for all $i\geq 2$. Let
\[
V=\Span(V_i :i\geq0)
\]
We claim that  $ V_i\subseteq \z(\m)$, for each $i\geq 1$. From part (a)(iii), we have $V_1\subseteq \z(\m)$. Suppose that  $ V_i\subseteq \z(\m)$ and let $Z\in V_{i+1}$.  Thus $Z=[X_1,Y]$ for some $Y\in V_i$.  Let $W\in \m$.  As $Y\in V_i\subseteq \z(\m)$, we have $[Y,W]=0$. As $\m$ is an ideal,  $[X_1,W]\in \m$ and so $[[X_1,W],Y]=0$.
Thus the Jacobi identity gives
\[
[Z,W]=[[X_1,Y],W]=[[X_1,W],Y]+[X_1,[Y,W]]=0.
\]
Thus $V_{i+1}\subseteq \z(\m)$, and by induction, $\Span(V_i :i\geq1)\subseteq \z(\m)$. In particular,
\[
\h\cap V= \h\cap\Span(V_i :i\geq1) \subseteq \z(\h)\]
 It remains to show that $V=\a$. Clearly, every subalgebra that contains $V_0$, must also contain $V_i$, for all $i\geq 1$, and hence it must contain $V$. So it suffices to show that $V$ is  a subalgebra. Notice that by construction $[X_1,V]\subseteq V$. Suppose that $Y_1,Y_2\in V$. Then
for $j=1,2$ we may write $Y_j=\alpha_j X_1 + Z_j$, where $\alpha_j\in\R,Z_j\in \Span(X_2,V_i :i\geq1)$. Thus, since  $\Span(V_i :i\geq1)\subseteq \z(\m)$ and $X_2\in \z(\m)$,
\[
[Y_1,Y_2]=\alpha_1[X_1,Z_2]-\alpha_2[X_1,Z_1]\in V.
\]
Hence $V$ is  a subalgebra, as claimed.\end{proof}

\begin{remark}\label{R:irreg6}
In general, Lemma \ref{L:cod2}(b) is not true in codimension $>2$. For example, consider the $6$-dimensional metric Lie algebra $\g$ having  a basis $\{X_1,\dots,X_6\}$ and presentation
\[
[X_2,X_5]=X_6,\ [X_3,X_4]=-X_6,\ [X_1,X_i]=X_{i+1},\ \text{for}\ i=2,3,4,5.
\]
Let $E_1=X_1-X_2$ and $E_i=X_i$ for $i=2,\dots,6$, and choose the inner product on $\g$ for which $E_1,\dots,E_6$ are orthonormal. Notice that the codimension  3 subalgebra $\h=\Span(E_2,E_5,E_6)$ is totally geodesic. Here $\hp=\Span(E_1,E_3,E_4)$. The intersection  of the subalgebras of $\g$ that contain $\hp$ is $\a=\Span(E_1,E_3,E_4,E_5,E_6)$, so $\a\cap \h= \Span(E_5,E_6)$, and  $\a\cap \h\not\subseteq \z(\h)$.
\end{remark}

\section{Filiform Algebras}\label{S:fa}

Recall that a nilpotent Lie algebra $\g$ of dimension $n$ is said to be {\em filiform} if it possesses an element of maximal nilpotency; that is, there exists $X\in\g$  with $\ad^{n-2}(X)\not=0$. The following theorem is due to Mich\`ele Vergne \cite{Ve}. We give a proof in the appendix.

\begin{theorem}\label{T:Ve}
Every filiform  nilpotent Lie algebra has a basis $\{X_1,\dots,X_n\}$ such that
\begin{align*}
[X_1,X_i]&=X_{i+1},\ \text{for all } i\geq 2,\\
[X_i,X_j]&\in \g_{i+j}, \ \text{for all } i,j\ \text{with }i+j\not= n+1,\\
\exists\alpha\in\R \ \text{such that }[X_i,X_{n-i+1}]&=(-1)^i \alpha X_n, \ \text{for all }2\leq i\leq n-1,\\
\text{if }n\ \text{is odd, }\alpha&=0,
\end{align*}
where $\g_k=\Span( X_k,\dots,X_n)$ and for convenience we have set $X_i=0$ for $i>n$.
\end{theorem}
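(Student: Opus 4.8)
The plan is to build the basis in three stages: first the ``chain'' relations $[X_1,X_i]=X_{i+1}$, then the filtration property $[X_i,X_j]\in\g_{i+j}$, and finally the description of the brackets on the anti-diagonal $i+j=n+1$. I would use only standard facts about nilpotent Lie algebras: the lower central series $C_i(\g)$ (with $C_0(\g)=\g$, $C_{i+1}(\g)=[\g,C_i(\g)]$), the inclusion $[C_a(\g),C_b(\g)]\subseteq C_{a+b+1}(\g)$, and the fact that for nilpotent $\g$ the $C_i(\g)$ strictly decrease until they reach $0$.

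\emph{Chain relations.} Fix $X_1\in\g$ with $T:=\ad(X_1)$ satisfying $T^{n-2}\neq0$. Counting dimensions in the strictly decreasing chain $\g\supsetneq C_1(\g)\supsetneq\cdots$, with $C_{n-2}(\g)\neq0$, forces $\dim C_k(\g)=n-1-k$ for $1\le k\le n-1$; in particular $\dim[\g,\g]=n-2$. Since $T(\g)\subseteq[\g,\g]$ and $T^{n-1}(\g)\subseteq C_{n-1}(\g)=0$, the operator $T$ has rank exactly $n-2$ and $T^{n-1}=0$, so its Jordan form is one block of size $n-1$ plus one of size $1$ and $\ker T$ is two-dimensional. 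As $X_1\in[\g,\g]$ would give $T(\g)\subseteq[[\g,\g],\g]=C_2(\g)$, of dimension $n-3$, we get $X_1\notin[\g,\g]$, so $X_1$ generates the size-$1$ block; choosing a cyclic vector $X_2$ for the size-$(n-1)$ block and putting $X_{i+1}:=[X_1,X_i]$ for $i\ge2$ gives a basis $\{X_1,\dots,X_n\}$ with $[X_1,X_i]=X_{i+1}$ (and $X_i=0$ for $i>n$). Here $C_k(\g)=\g_{k+2}$ for $k\ge1$, and $\g_n=\Span(X_n)$ is the one-dimensional centre $\z(\g)$.

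\emph{Filtration property.} For any chain basis one has $X_i\in C_{i-2}(\g)$, hence $[X_i,X_j]\in C_{i+j-3}(\g)=\g_{i+j-1}$ for $i,j\ge2$ --- one degree short of the goal. To recover the missing degree I would induct on $n$ (the small cases being trivial). Pass to $\bar\g=\g/\z(\g)$: it is filiform of dimension $n-1$ (since $\ad(\bar X_1)^{n-3}\bar X_2=\bar X_{n-1}\neq0$) with $\bar X_1$ maximally nilpotent, so by the inductive hypothesis, stated so as to permit prescribing the first basis vector, $\bar\g$ has an adapted basis beginning with $\bar X_1$. Lift it to $\g$ and redefine $X_{i+1}:=[X_1,X_i]$ to make the chain relations exact; the chain cannot close prematurely (else $\ker\ad(X_1)$ would contain three independent vectors, against $\dim\ker\ad(X_1)=2$), so after rescaling $X_n$ we again have a basis of $\g$ in which $X_i$ lifts the $i$-th vector of $\bar\g$. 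Since $X_n$ spans $\z(\g)\subseteq\g_m$ for all $m\le n$, the lifted brackets automatically satisfy $[X_i,X_j]\in\g_{i+j}$ for $i+j\le n-1$, lie in $\Span(X_n)$ for $i+j\ge n+1$, and lie in $\Span(X_{n-1},X_n)$ for $i+j=n$, the $X_{n-1}$-component being the ``defect'' $(-1)^i\bar\alpha X_{n-1}$ inherited from $\bar\g$. Clearing these $X_{n-1}$-terms is the step I expect to be the main obstacle: it is needed only when $n$ is odd, and --- since modifications within $[\g,\g]$ change the brackets only in degrees $\ge i+j$ and so cannot touch a degree-$(n-1)$ term on the diagonal $i+j=n$ --- it requires a genuine change of basis involving the maximally nilpotent vector $X_1$ together with the cyclic vector $X_2$, carried out in a way compatible with the Jacobi identity. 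Once this is done, $[X_i,X_j]\in\g_{i+j}$ holds for all $i,j$ with $i+j\le n$.

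\emph{Anti-diagonal.} Write $[X_k,X_{n+1-k}]=\mu_kX_n$ for $2\le k\le n-1$ and use Jacobi in the form $\ad(X_1)[X_i,X_j]=[X_{i+1},X_j]+[X_i,X_{j+1}]$. When $i+j=n$ the left side vanishes, as $[X_i,X_j]$ is then central, so $\mu_{k+1}=-\mu_k$ and thus $\mu_k=(-1)^k\alpha$ with $\alpha:=\mu_2$, which is the asserted relation $[X_k,X_{n+1-k}]=(-1)^k\alpha X_n$; skew-symmetry $\mu_k=-\mu_{n+1-k}$ then gives $\alpha=(-1)^n\alpha$, so $\alpha=0$ when $n$ is odd. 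Finally, applying the same identity with $i+j\ge n+1$ shows that on each anti-diagonal $i+j\ge n+2$ the brackets (all multiples of $X_n$) obey alternating-sign relations; both end terms of such an anti-diagonal are brackets of some $X_k$ with the central $X_n$ and hence vanish, so every bracket on it vanishes, i.e.\ $[X_i,X_j]=0\in\g_{i+j}$ for $i+j\ge n+2$. This would verify all four conditions of the theorem.
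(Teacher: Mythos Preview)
Your overall architecture matches the paper's: build a chain basis, pass to the quotient $\g/\z(\g)$ and induct, then handle the anti-diagonal via Jacobi.  The chain-basis construction and the anti-diagonal argument (including the skew-symmetry observation $\alpha=(-1)^n\alpha$, and the vanishing on anti-diagonals $i+j\ge n+2$ by walking to the endpoint $[X_{m-n},X_n]=0$) are correct.  The gap is exactly where you yourself flag the ``main obstacle'': you do not resolve it, and the method you propose is not the one that works.

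Concretely, when $n$ is odd the quotient $\bar\g$ has even dimension, so the lifted basis satisfies $[X_i,X_{n-i}]=(-1)^i\bar\alpha\,X_{n-1}+\beta_iX_n$ with $\bar\alpha$ a priori nonzero.  You suggest killing this $X_{n-1}$-component by a ``genuine change of basis involving $X_1$ together with $X_2$''.  But such a change amounts to choosing a different adapted basis for $\bar\g$, and your inductive hypothesis gives no reason why $\bar\g$ should admit one with $\bar\alpha=0$; indeed, for a generic even-dimensional filiform algebra it need not.  The paper does \emph{not} change basis here.  Instead it shows that $\bar\alpha$ is \emph{forced} to vanish by the Jacobi identity in $\g$ itself: writing $[X_i,X_{n+1-i}]=\alpha_iX_n$, Jacobi gives $\alpha_i+\alpha_{i-1}=(-1)^{i-1}\bar\alpha$, so $\alpha_i=(-1)^i(\alpha_2+(2-i)\bar\alpha)$; the middle term $\alpha_{(n+1)/2}=0$ yields $\alpha_2=\tfrac{n-3}{2}\bar\alpha$; and a second Jacobi computation on $[X_2,[X_{(n-1)/2},X_{(n+1)/2}]]$ yields $\bar\alpha\,\alpha_2=0$.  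Together these force $\bar\alpha=0$ (hence all $\alpha_i=0$), so the filtration holds on the diagonal $i+j=n$ with no further modification.

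One ingredient you omit, and which the paper uses to seed this induction correctly, is the preliminary adjustment $X_2\mapsto X_2-bX_1$ ensuring $[X_2,X_3]\in\g_5$.  This is what guarantees that the images $X'_1,\dots,X'_{n-1}$ themselves form an adapted basis of the quotient (so that the inductive hypothesis applies to \emph{this} basis, not merely to some basis), and it is what makes the $n=5$ case of the Jacobi computation above go through.  Without it your quotient-lift step, as written, does not close.
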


\begin{Definition}  We will say that a filiform  nilpotent Lie algebra $\g$ is {\em regular} if $\g$ has a basis $\{X_1,\dots,X_n\}$  satisfying the conditions of Theorem \ref{T:Ve}, with $\alpha=0$.
\end{Definition}

\begin{remark}\label{R:dim6}
By definition, a filiform  nilpotent Lie algebra $\g$ is regular if and only if $\g$ has a basis $\{X_1,\dots,X_n\}$  with $[X_i,X_j]\in \g_{i+j}$ for all $i,j$. By  Theorem \ref{T:Ve},   $\g$ is regular  if it has odd dimension. Notice that for the 4-dimensional algebra with basis $\{X_1,\dots,X_4\}$ and nontrivial presentation
\[
[X_1,X_2]=X_3,\ [X_1,X_3]=X_4,\ [X_2,X_3]=X_4,
\]
the relations are those of Theorem \ref{T:Ve}, with $\alpha=1$. Nevertheless, this algebra is regular; it is isomorphic to the standard filiform algebra $L_4$, as one can see by examining the basis $\{X_1,X_2-X_1,X_3,X_4\}$. Indeed, there are no irregular filiform  nilpotent Lie algebras in dimension less than $6$.
The 6-dimensional Lie algebra of Remark \ref{R:irreg6} is an irregular  filiform algebra.
\end{remark}

\begin{lemma}\label{L:Maxnilp}
Let $\g$ be a filiform  nilpotent  Lie algebra of dimension at least $5$, having a basis $\{X_1,\dots,X_n\}$  satisfying the conditions of Theorem \ref{T:Ve}, and let $\alpha$ be as in the statement of Theorem \ref{T:Ve}. If  $X=\sum_{i=1}^n a_i X_i$,
then
$X$  has maximal nilpotency if and only if $a_1\not=0$ and $\alpha a_2\not =-a_1$.
\end{lemma}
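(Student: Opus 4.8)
The plan is to compute $\ad(X)$ on the basis $\{X_1,\dots,X_n\}$ and determine exactly when $(\ad X)^{n-2}\neq 0$. Write $X=\sum_{i=1}^n a_iX_i$. The key structural input is the grading-type estimate from Theorem~\ref{T:Ve}: $[X_i,X_j]\in\g_{i+j}$ whenever $i+j\neq n+1$, and $[X_i,X_{n-i+1}]=(-1)^i\alpha X_n$. First I would observe that $\ad(X)$ respects the filtration $\g_2\supseteq\g_3\supseteq\cdots\supseteq\g_n\supseteq 0$ up to the one anomalous pair, in the sense that for $Y\in\g_k$ one has $[X,Y]\in\g_{k+1}$ \emph{provided the pair degree never equals $n+1$}; the only way to get a ``jump back'' to $\g_n$ is the bracket $[X_i,X_{n-i+1}]$, which lands in $\g_n$ anyway, so in fact $\ad(X)(\g_k)\subseteq\g_{k+1}$ for all $k\ge 2$ with the convention $\g_{n+1}=0$. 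In particular $\ad(X)$ always maps $\g_2$ to $\g_3$ and then down the filtration, so $(\ad X)^{n-2}(\g_2)\subseteq\g_n$ and $(\ad X)^{n-1}(\g_2)=0$.

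Next, since $\ad(X)$ is nilpotent and $\dim\g=n$, maximal nilpotency $(\ad X)^{n-2}\neq 0$ is equivalent to the existence of a vector on which $\ad(X)$ acts with a single Jordan block of size $n-1$ plus a one-dimensional kernel complement; concretely it suffices to test $(\ad X)^{n-2}(X_2)\neq 0$ (or $(\ad X)^{n-2}(v)\ne 0$ for $v$ a lift of a generator of $\g/[\g,\g]$ not proportional to $X$), because everything in $\g_3=[\g,\g]$ already lies one step down. So the computation reduces to tracking the leading term of $(\ad X)^k(X_2)$ in the filtration $\g_{k+2}/\g_{k+3}$. Using $[X_1,X_i]=X_{i+1}$ as the ``dominant'' part of $\ad(X)$ and the fact that all other brackets $[X_i,X_j]$ with $i,j\ge 2$ raise the degree by at least one extra unit (they lie in $\g_{i+j}$ with $i+j\ge k+3$ once we are iterating), I would prove by induction that modulo $\g_{k+3}$,
\[
(\ad X)^k(X_2)\equiv a_1^{k-1}\bigl(a_1+\alpha a_2\bigr)\,X_{k+2},
\]
for $1\le k\le n-2$, where the factor $(a_1+\alpha a_2)$ enters precisely at the first step because $[X,X_2]=a_1X_3+\sum_{j\ge 3}a_j[X_j,X_2]$ and the $j=n-1$ term contributes $a_{n-1}[X_{n-1},X_2]=(-1)^{n-1}\alpha a_{n-1}X_n$, but more to the point the anomalous pair that feeds $X_n$ from a \emph{low} index is $[X_2,X_{n-1}]$; the cleanest route is to note $[X,X_2]=a_1X_3+(\text{terms in }\g_4)$ and $[X,X_3]=a_1X_4+(\text{terms in }\g_5)+a_{n-2}[X_{n-2},X_3]$, and chase how the coefficient $\alpha a_2$ appears — I expect, after reindexing via Theorem~\ref{T:Ve}, that the surviving obstruction coefficient is exactly $a_1+\alpha a_2$. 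Then $(\ad X)^{n-2}(X_2)\equiv a_1^{n-3}(a_1+\alpha a_2)X_n$, which is nonzero iff $a_1\neq 0$ and $\alpha a_2\neq -a_1$.

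The main obstacle is bookkeeping the single ``wrap-around'' bracket $[X_i,X_{n-i+1}]=(-1)^i\alpha X_n$ correctly: unlike all the other structure constants it does not increase the $\g$-degree in the naive way, so one must check that it contributes to $(\ad X)^{n-2}(X_2)$ exactly once and with the claimed coefficient, and does not spoil the inductive leading-term formula at intermediate steps. I would handle this by isolating the $X_1$-part of $\ad(X)$ as a ``shift operator'' $S\colon X_i\mapsto X_{i+1}$ and writing $\ad(X)=a_1S+N$ where $N$ strictly increases the filtration degree by at least $2$ on all of $\g_2$ except that $N(X_{n-i+1})$ may hit $X_n$; then expanding $(\ad X)^{n-2}$ and noting that any summand containing a factor of $N$ lands in $\g_{n+1}=0$ \emph{unless} that $N$ is applied at the very last available step and via the wrap-around term, one pins down the two surviving contributions $a_1^{n-2}X_n$ (all-$S$) and $a_1^{n-3}\alpha a_2X_n$ (one wrap-around, inserted at the appropriate spot), giving the stated criterion. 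For the converse direction — if $a_1=0$ or $a_1+\alpha a_2=0$ then $X$ is not of maximal nilpotency — one simply reads off that the leading coefficient vanishes, and since $(\ad X)^{n-2}$ has image in $\g_n=\Span(X_n)$ this forces $(\ad X)^{n-2}=0$ on all of $\g$ after checking the remaining basis vectors lie in $\g_3$ and are killed even faster.
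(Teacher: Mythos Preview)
Your overall strategy---track the leading coefficient of $(\ad X)^k(X_2)$ down the filtration $\g_2\supset\g_3\supset\cdots$---is exactly the paper's, and your final answer $(\ad X)^{n-2}(X_2)=a_1^{n-3}(a_1+\alpha a_2)X_n$ is correct. But the inductive formula you state,
\[
(\ad X)^k(X_2)\equiv a_1^{k-1}(a_1+\alpha a_2)\,X_{k+2}\pmod{\g_{k+3}},\qquad 1\le k\le n-2,
\]
is wrong for $k<n-2$. Already at $k=1$ one has $[X,X_2]=a_1X_3+\sum_{i\ge 3}a_i[X_i,X_2]$, and every bracket $[X_i,X_2]$ with $i\ge 3$ lies in $\g_5\subseteq\g_4$ (the wrap-around $[X_{n-1},X_2]=\pm\alpha X_n$ included, since $n\ge 5$). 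So $(\ad X)(X_2)\equiv a_1X_3\pmod{\g_4}$, not $(a_1+\alpha a_2)X_3$. The factor $\alpha a_2$ does \emph{not} enter at the first step.

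The correct bookkeeping is that for $2\le j\le n-2$ one has $[X,X_j]\equiv a_1X_{j+1}\pmod{\g_{j+2}}$, so by induction $(\ad X)^k(X_2)\equiv a_1^kX_{k+2}\pmod{\g_{k+3}}$ for $0\le k\le n-3$. The anomalous contribution appears only at the \emph{final} step, because $[X,X_{n-1}]=a_1X_n+a_2[X_2,X_{n-1}]=(a_1+\alpha a_2)X_n$ exactly (all $[X_i,X_{n-1}]$ with $i\ge 3$ vanish). This is precisely how the paper argues: normalize $a_1=1$, iterate $[X,X_j]\equiv X_{j+1}$ to reach $X_{n-1}\pmod{\g_n}$, then compute the last bracket directly. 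Your $S+N$ decomposition in the final paragraph would also work and in fact recovers the same picture (the single surviving $N$ must be the leftmost factor, acting on $X_{n-1}$), but it is more machinery than needed once you see that the wrap-around is invisible modulo $\g_{j+2}$ until $j=n-1$.

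For the converse, your claim that ``the remaining basis vectors lie in $\g_3$ and are killed even faster'' skips $X_1$. The paper handles this by noting that, once $a_1\neq 0$, any $Y$ may be replaced by $Y-\lambda X$ to kill its $X_1$-component, reducing to $Y\in\g_2$; then $(\ad X)^{n-2}(X_j)=0$ for $j\ge 3$ by the filtration, leaving only the $X_2$-coefficient to matter.
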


\begin{proof} Since $n\geq 5$, it is clear from the relations of Theorem \ref{T:Ve} that if  $X$  has maximal nilpotency, then $a_1\not=0$. So, without loss of generality, we will assume that $a_1=1$.

If $\alpha=0$, the relations of Theorem \ref{T:Ve} give  $[X_i,X_j]\in\g_{i+j}$ for all $i,j$. Hence $[X,X_j]=X_{j+1}\pmod{\g_{j+2}}$,
and in particular,
$[X,\g_j]\subseteq\g_{j+1}$, for all $j>2$.
It follows that  $\ad^{n-2}(X)(X_2)=X_n$, and so $X$ has maximal nilpotency.

Now suppose that $\alpha\not=0$. In this case, $[X_i,X_j]\in\g_{i+j}$ whenever $i+j\not=n+1$, and $[X_i,X_{n-i+1}]=(-1)^i \alpha X_n$,  for all $2\leq i\leq n-1$.
Note that for all $2\leq j\leq n-2$,
 \[
[X,X_j]-X_{j+1}=[X-X_1,X_j]=\left[\sum_{i=2}^n a_i X_i,X_j\right]\in \g_{j+2},
\]
and so $[X,X_j]=X_{j+1} \pmod{\g_{j+2}}$,
and in particular,
$[X,\g_j]\subseteq\g_{j+1}$.
It follows that  $\ad^{n-3}(X)(X_2)=X_{n-1}\pmod{\g_{n}}$.
Thus
\[
\ad^{n-2}(X)(X_2)=[X,X_{n-1}]=[X_1+a_2X_2,X_{n-1}]=(1+\alpha a_2)X_n.
\]
Hence, if $1+\alpha a_2\not=0$, then $X$  has maximal nilpotency.

Conversely, if $\ad^{n-2}(X)(Y)\not=0$ for some element $Y$, then  subtracting a multiple of $X$ and rescaling $Y$ if necessary, we may assume that $Y$ has the form $Y=X_2+\sum_{i=3}^n b_i X_i$ for some constants $b_3,\dots,b_n$. But $\ad^{n-2}(X)(X_i)=0$ for $i>2$ and $\ad^{n-2}(X)(X_2)\not=0$ only when $\alpha a_2\not =-1$. Thus $\ad^{n-2}(X)(Y)\not=0$ only when $\alpha a_2\not =-1$.
\end{proof}

\begin{remark}\label{R:basis}
Suppose that $\g$ is a filiform nilpotent metric Lie algebra  having a basis $B=\{X_1,\dots,X_n\}$  satisfying the conditions of Theorem \ref{T:Ve}, and let $\alpha$ be as in the statement of Theorem \ref{T:Ve}. Apply  the Gram-Schmidt orthonormalisation procedure to $B$, starting with the elements of largest $B$-degree, so as to obtain an orthonormal basis $E=\{E_1,\dots,E_n\}$, where for each $i$ one has $\deg_B(E_i)=i$. Observe that if $\g$ is regular, then $E_1$ has maximal nilpotency. If $\g$ is irregular, then by the previous lemma, $E_1$ has maximal nilpotency unless $E_1$ is a multiple of $X_1+\sum_{i=2}^n a_i X_i$, where $1+\alpha a_2=0$.  However, in all cases, one has $\deg_E([E_i,E_j])\geq i+j$, for all $i,j$ with  $i+j\not= n+1$, and  $\deg_E([E_i,E_j])\geq n$, for all $i,j$ with  $i+j= n+1$. In particular,  $\deg_E([E_1,E_i])=i+1$, for all $2\leq i\leq n-2$.

Now let $\h$ be a totally geodesic subalgebra of $\g$.

\end{remark}

\begin{lemma}\label{L:mn1}
Suppose that $\g$ is a filiform nilpotent metric Lie algebra and that $\h$ is a proper totally geodesic subalgebra. If $\dim(\h)\geq 2$, then  $\h$ does not contain an element of maximal nilpotency.
\end{lemma}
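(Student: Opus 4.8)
The plan is to argue by contradiction: suppose $\h$ is a proper totally geodesic subalgebra with $\dim(\h)\geq 2$ that contains an element $X$ of maximal nilpotency. Since $X$ is then a cyclic vector for $\ad(X)$ acting on $\g$ (modulo the one-dimensional issue at the top), I would first choose a convenient basis adapted to $X$: by Vergne's theorem (Theorem~\ref{T:Ve}) and Lemma~\ref{L:Maxnilp}, after rescaling we may take $X=X_1$ in a basis $\{X_1,\dots,X_n\}$ satisfying the conclusions of Theorem~\ref{T:Ve}, so that $[X_1,X_i]=X_{i+1}$ for $i\geq 2$ and the $\g_k=\Span(X_k,\dots,X_n)$ form a filtration with $\ad(X_1)\g_k\subseteq\g_{k+1}$. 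Then $\ad(X_1)^{\,n-2}$ is the nilpotent map sending $X_2$ to (a nonzero multiple of) $X_n$ and killing everything else; its image is $\Span(X_n)=\z(\g)\cap[\g,\g]$-ish, and more usefully, the powers $\ad(X_1)^k$ have strictly decreasing rank.

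Next I would use the totally geodesic condition \eqref{L:TotGeod-cond} together with the fact that $\ad(X_1)$, restricted to $\h$, is a derivation of $\h$ (since $\h$ is a subalgebra containing $X_1$). The key linear-algebra input is the observation already exploited in the proof of Proposition~\ref{P:cod1}: if $Y\in\h$ with $Z:=\ad(X_1)(Y)\neq 0$ but $\ad(X_1)(Z)=0$, then choosing a second vector $W\in\h^\perp$ close to the picture and computing $\langle[W,Y],Z\rangle+\langle[W,Z],Y\rangle$ can be forced to be nonzero, contradicting \eqref{L:TotGeod-cond}. More directly: because $X_1\in\h$ and $\ad(X_1)|_\h$ is nilpotent, pick $Y\in\h$ with $\ad(X_1)(Y)\neq0$ and $\ad(X_1)^2(Y)=0$, set $Z=\ad(X_1)(Y)\in\h$. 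For $X\in\h^\perp$ arbitrary, condition \eqref{L:TotGeod-cond} with the pair $(Y,Z)$ and $X=X_1\in\h$ is not directly usable since $X_1\notin\h^\perp$; instead I would run the argument with $X_1$ playing the role of the element whose skew-adjointness on $\h$ is being tested — but $X_1\in\h$, not $\h^\perp$, so the relevant fact is simply that $\ad(X_1)|_\h$ being a nilpotent derivation forces, via $\langle[X_1,Y],Z\rangle=\langle\ad(X_1)Y,\ad(X_1)Y\rangle=\|Z\|^2>0$ paired against $\langle[X_1,Z],Y\rangle=\langle\ad(X_1)^2 Y,Y\rangle=0$, that $X_1$ cannot be skew-adjoint on $\h$. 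This does not immediately contradict total geodesy, so the real work is transferring this to an element of $\h^\perp$.

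The honest route, and where I expect the main obstacle, is this. Since $X_1$ has maximal nilpotency and lies in $\h$, the chain $X_1,\ad(X_1)X_2,\ad(X_1)^2 X_2,\dots$ shows $\g$ is "almost generated" by $X_1$ together with $\h$; more precisely $\g=\Span(X_1)+\h+[\g,\g]$ is too crude, so I would instead show $\dim\h\geq 2$ forces $\h$ to contain $X_2+(\text{higher})$ after adjustment, hence $\h\ni \ad(X_1)^k(\text{that})$ for all $k$, i.e. $\h\supseteq\g_3$ or something comparably large, forcing $\codim\h\leq 2$; then one invokes Proposition~\ref{P:cod1} (codimension~1 is impossible unless $\g$ splits, which a nonabelian filiform algebra does not) and handles the codimension-2 case via Lemma~\ref{L:cod2}: with $X_1\in\h$ of maximal nilpotency, $[X_1,X_2]=X_3$ and one checks $X_3\notin\z(\h)$ (since $[X_3,\text{something in }\h]\neq0$ using maximal nilpotency of $X_1$ produces $X_4,\dots$), but Lemma~\ref{L:cod2}(a)(iii) demands the bracket of the two complement vectors lie in $\z(\h)$, and the maximal-nilpotency of $X_1$ is exactly what obstructs that. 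The delicate point, and the main obstacle, is showing that $\h$ must contain a tail $\g_k$ of large codimension once it contains an element of maximal nilpotency together with one more linearly independent vector: this requires carefully tracking $E$-degrees (Remark~\ref{R:basis}) and using that $\ad(X_1)$ shifts degree by exactly $1$ on the range $2\leq i\leq n-2$, so that $\h$, being $\ad(X_1)$-stable and containing a degree-$\geq2$ element, swallows $X_n,X_{n-1},\dots$ down to some level, pinning $\codim\h\in\{1,2\}$ and reducing to the two results just cited.
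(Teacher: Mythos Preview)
Your structural observation is correct and matches the paper: once $\h$ contains an element $Y$ of maximal nilpotency and $\dim\h\geq 2$, let $k>1$ be the smallest $E$-degree of a second element of $\h$; then $\ad(Y)$-stability of $\h$ forces $\h=\Span(Y,E_k,E_{k+1},\dots,E_n)$, and $k>2$ since $\h$ is proper. However, your claim that this pins $\codim\h\in\{1,2\}$ is unjustified and in general false: one has $\codim\h=k-2$, and there is no a priori reason $k\leq 4$. The second element of $\h$ could have $E$-degree $k=7$, say, giving $\codim\h=5$, and then neither Proposition~\ref{P:cod1} nor Lemma~\ref{L:cod2} is available. Your sentence ``$\dim\h\geq 2$ forces $\h$ to contain $X_2+(\text{higher})$ after adjustment'' is exactly the unsupported step: $\dim\h\geq 2$ only guarantees a second element of \emph{some} degree $k\geq 2$, not of degree~$2$.

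The paper finishes directly for arbitrary $k>2$, and this is the missing idea in your proposal. Having $\h=\Span(Y,E_k,\dots,E_n)$, replace $Y$ by $Y':=Y-\sum_{i\geq k}\langle Y,E_i\rangle E_i\in\h$, which still has maximal nilpotency and is now orthogonal to $\Span(E_k,\dots,E_n)$. Then
\[
Z:=E_{k-1}-\frac{\langle E_{k-1},Y'\rangle}{\|Y'\|^2}\,Y'
\]
lies in $\h^\perp$, and one tests condition~\eqref{L:TotGeod-cond} with $Y',E_k\in\h$: since $[Y',Z]=[Y',E_{k-1}]$ has nonzero $E_k$-component (by maximal nilpotency of $Y'$), we get $\langle[Z,Y'],E_k\rangle\neq 0$; but $[Z,E_k]\in\Span(E_{k+1},\dots,E_n)$, which is orthogonal to $Y'$ by construction, so $\langle[Z,E_k],Y'\rangle=0$. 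This is the desired contradiction. In other words, $E_{k-1}$ (projected onto $\h^\perp$) is precisely the element of $\h^\perp$ you were looking for when you wrote ``the real work is transferring this to an element of $\h^\perp$''; no reduction to small codimension is needed.
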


\begin{proof}
By Proposition \ref{P:cod1}, filiform nilpotent metric Lie algebras do not possess totally geodesic subalgebras of codimension one. So, as $\dim(\h)\geq 2$, we have $\dim(\g)\geq 4$. If $\dim(\g)= 4$, then as $\g$ is filiform, $\g$ has a derived algebra of dimension two, and we see explicitly from Proposition \ref{P:4D1Dtga}(b) that $\h$ has no element of maximal nilpotency. So we may assume that $\dim(\g)\geq 5$.

Let  $B=\{X_1,\dots,X_n\}$ be a basis satisfying the relations given in Theorem \ref{T:Ve}, and let $\alpha$ be as in the statement of Theorem \ref{T:Ve}.   Suppose that $\h$ contains an element $Y$ of maximal nilpotency; by Lemma \ref{L:Maxnilp} we may assume that $Y$ has the form $Y=X_1+\sum_{i=2}^n a_i X_i$,
where $\alpha a_2\not =-1$. In particular,  from the proof of Lemma \ref{L:Maxnilp}, $[Y,X_i]=X_{i+1} \pmod{\g_{i+2}}$ for all $2\leq i\leq n-2$ and $[Y,X_{n-1}]=(1+\alpha a_2)X_{n}$.
Let  $E=\{E_1,\dots,E_n\}$ be the orthonormal  basis for $\g$ obtained from $B$ as described in Remark \ref{R:basis}. Observe that for all $2\leq i\leq n-1$, we have $[Y,E_i]=\sum_{j\geq i+1} b_{i,j} E_j$, where $b_{i,j}\in\R$ for each $j$ and $b_{i,(i+1)}\not=0$.
Let $k$ be the smallest integer greater than one for which there is $Y_k\in \h$ with $\deg_B(Y_k)=k$; note that $k$ exists as $\dim(\h)\geq 2$. As $\h$ is a subalgebra, $Y_{j+k}=\ad^j(Y)(Y_k)\in \h$ for all $j\geq 1$, and moreover, $\deg_B (Y_{k+j})=\deg_B (E_{k+j})$ for all $0\leq j\leq n-k$, as $Y$ has maximal nilpotency. Hence
\[
\h= \Span(Y,Y_k,Y_{k+1},\dots,Y_{n})=  \Span(Y,E_k,E_{k+1},\dots,E_n),
\]
where the equality on the left follows from the definition of $k$. Note that $k>2$ since otherwise $\h=\g$. Let
 \[
Y'=Y-\sum_{i=k}^n \langle Y,E_i\rangle E_i.
 \]
Notice that $Y'$ has maximal nilpotency. Moroever,   $Y'\in\h$, and $Y'$ is orthogonal to $\Span(E_k,E_{k+1},\dots,E_n)$. By Lemma \ref{L:Maxnilp} and its proof, for all $2\leq i\leq n-1$, we have $[Y',X_i]=X_{i+1}\pmod{\g_{i+2}}$  and  hence $[Y',E_i]=\sum_{j=i+1} ^nb'_{i,j} E_j$ where $b'_{i,j}\in\R$ for each $j$ and $b'_{i,(i+1)}\not=0$.   Consider the element
\[
Z=E_{k-1}-\frac{\langle E_{k-1},Y'\rangle}{\Vert Y'\Vert^2} Y'\in\hp.
\]
We have $ [Y',Z]=[Y',E_{k-1}]=\sum_{j=k}^n b'_{(k-1),j} E_j$ and so $\langle [Y',Z],E_k\rangle=b'_{(k-1),k}\not=0$. However, $\langle Y',[Z,E_k]\rangle=0$ since $[Z,E_k]$ belongs to $\Span(E_{k+1},\dots,E_n) $. Hence
\[
\langle [Z,Y'],E_k\rangle+\langle Y',[Z,E_k]\rangle =-b'_{(k-1),k}\not=0,
\]
contradicting Lemma \ref{L:basic}.
\end{proof}

\begin{remark}\label{R:reg}
If $\g$ is a regular filiform nilpotent metric Lie algebra, then by Remark \ref{R:basis}, $\g$ has an orthonormal basis $E=\{E_1,\dots,E_n\}$ for which $\deg_E([E_i,E_j])\geq i+j$, for all $i,j$, and $\deg_E([E_1,E_i])=i+1$, for all $2\leq i\leq n-1$.  In particular, from Lemma \ref {L:Maxnilp}, the elements of the form $E_1+\sum_{i=2}^n a_iE_i$ all have maximal nilpotency. So if $\h$ is a totally geodesic subalgebra of $\g$, then by Lemma \ref{L:mn1}, $E_1\in \hp $.
Note that, by construction, $\Span(E_k, \dots, E_n)=\Span(X_k, \dots, X_n)=\g_k$, so $\deg_E=\deg_B$.
\end{remark}

\begin{lemma}\label{L:mn2}
Suppose that $\g$ is a filiform nilpotent metric Lie algebra and that $\h$ is a totally geodesic subalgebra for which $\hp$ is $\h$-invariant. If $\hp$ contains an element of maximal nilpotency, then $\dim (\h)\leq (\dim(\g))/2$.
\end{lemma}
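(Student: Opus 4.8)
The plan is to use the adjoint operator $\ad(Y)$ of the given maximal-nilpotency element $Y\in\hp$ to produce a subspace of $\hp$ of dimension at least $\dim\h$; since $\dim\h+\dim\hp=\dim\g$, this immediately gives $\dim(\h)\le(\dim(\g))/2$. The two structural facts about $Y$ that the argument rests on are: (i) the centraliser $\ker(\ad(Y))$ of $Y$ in $\g$ is two-dimensional; and (ii) $Y\notin\im(\ad(Y))$. Granting these, here is the count.

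Since $\hp$ is $\h$-invariant and $Y\in\hp$, for every $W\in\h$ we have $[Y,W]=-[W,Y]\in\hp$, so $\ad(Y)(\h)\subseteq\hp$. Moreover $Y\notin\h$ (because $Y\in\hp$ and $\h\cap\hp=0$), so the two-dimensional subspace $\ker(\ad(Y))$ is not contained in $\h$; hence $\dim\bigl(\h\cap\ker(\ad(Y))\bigr)\le 1$, and rank--nullity applied to the linear map $\ad(Y)\colon\h\to\g$ gives $\dim\bigl(\ad(Y)(\h)\bigr)\ge\dim(\h)-1$. Finally $\ad(Y)(\h)\subseteq\im(\ad(Y))$, which does not contain $Y$ by (ii), so $\Span(Y)\cap\ad(Y)(\h)=0$. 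Therefore $\Span(Y)+\ad(Y)(\h)$ is a subspace of $\hp$ of dimension $1+\dim\bigl(\ad(Y)(\h)\bigr)\ge\dim(\h)$, whence $\dim\hp\ge\dim\h$ and $\dim(\h)\le(\dim(\g))/2$.

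It remains to establish (i) and (ii). When $n=\dim\g\ge 5$, pick a basis $B=\{X_1,\dots,X_n\}$ satisfying Theorem \ref{T:Ve} with $\alpha$ as there; by Lemma \ref{L:Maxnilp} we may write $Y=X_1+\sum_{i\ge2}a_iX_i$ with $1+\alpha a_2\neq 0$, and the computation in the proof of that lemma gives $[Y,X_i]\equiv X_{i+1}\pmod{\g_{i+2}}$ for $2\le i\le n-2$, $[Y,X_{n-1}]=(1+\alpha a_2)X_n$, $[Y,X_n]=0$, and $[Y,X_1]\in[\g,\g]=\g_3$. Thus the vectors $[Y,X_2],\dots,[Y,X_{n-1}]$ have pairwise distinct $B$-degrees $3,4,\dots,n$, so $\im(\ad(Y))=\g_3=\Span(X_3,\dots,X_n)$ has dimension $n-2$; consequently $\ker(\ad(Y))$ is two-dimensional, and since it contains the independent vectors $Y$ and $X_n$ it equals $\Span(Y,X_n)$, which is (i); and $Y$ has nonzero $X_1$-component, so $Y\notin\g_3$, which is (ii). For $n\le 4$ the algebra $\g$ is abelian of dimension at most two, or isomorphic to $L_3$ or $L_4$, and (i), (ii) are immediate; alternatively (i) and (ii) hold for all $n$ purely from the fact that $\ad(Y)$ is nilpotent with $\ad^{n-2}(Y)\neq 0=\ad^{n-1}(Y)$, hence of Jordan type $(n-1,1)$, together with the observation that $\im(\ad^{n-2}(Y))$ is the one-dimensional centre of $\g$ and so does not contain $Y$.

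The crux is the verification of (i) and (ii) — correctly pinning down the centraliser and the image of $\ad(Y)$ for a maximal-nilpotency element; everything else is a dimension count. Note that the hypothesis is used only through the $\h$-invariance of $\hp$, not the full totally geodesic condition \eqref{L:TotGeod-cond}, and that Lemma \ref{L:mn1} is not needed for this statement.
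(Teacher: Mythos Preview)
Your argument is correct and is essentially the paper's proof: both restrict $\ad$ of the maximal-nilpotency element $X\in\hp$ to $\h$, use $\h$-invariance of $\hp$ to land in $\hp$, bound the kernel by $1$ (since $X\in\ker(\ad X)\setminus\h$ and $\dim\ker(\ad X)=2$), and use $X\notin\im(\ad X)$ (by nilpotence) to gain the extra dimension, yielding $2\dim\h\le\dim\g$. The only cosmetic difference is that the paper phrases the count as $\dim(\im f)\le\dim\hp-1$ rather than building the subspace $\Span(X)+\ad(X)(\h)\subseteq\hp$; your closing remark that only $\h$-invariance (not the full condition \eqref{L:TotGeod-cond}) is used is accurate, though your alternative Jordan-type justification of (ii) is not quite self-contained---the clean reason is simply that $\ad(W)$ is nilpotent for every $W$, so $[Y,W]=Y$ is impossible.
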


\begin{proof}
Suppose that $X\in \hp$ has maximal nilpotency. Consider the map $f:\h\to \hp$ defined by $f(Y)= \ad(X)(Y)$, which is well defined because $\hp$ is $\h$-invariant. We have
\[
\dim( \h) =\dim(\ker f)+\dim (\im f) .
\]
Note that $X \notin \im f$, as $\ad(Y)$ is nilpotent. Thus $f$ is not surjective and so $\dim (\im f)\leq \dim (\hp)-1=\dim(\g)-\dim (\h) -1$. Thus
\[
2\dim (\h) \leq \dim(\ker f)+\dim(\g)-1.
\]
Notice also that as $X$  has maximal nilpotency, $\dim( \ker f)\leq 1$. Hence, from above,
$2\dim( \h )\leq \dim(\g)$, as required.
\end{proof}

\begin{proof}[{Proof of Theorem \ref{T:fili}}]
Let  $B=\{X_1,\dots,X_n\}$ be a basis satisfying the relations given in Theorem \ref{T:Ve}, and let $\alpha$ be as in the statement of Theorem \ref{T:Ve}.   By Lemma \ref{L:mn1}, $\h$ has no element of  maximal nilpotency, while by Lemma \ref{L:mn2}, if $\hp$ has  maximal nilpotency, then $\dim( \h)\leq( \dim(\g))/2$. So we may assume that neither $\h$ nor $\hp$ contains an element of  maximal nilpotency. In particular, by Remark \ref{R:reg}, $\g$ is irregular and hence by Remark \ref{R:dim6}, $\dim(\g)\geq 6$ and $\dim(\g)$ is even.

We have $[\g,\g]=\Span(X_3,\dots,X_n)$. Let $p:\g\to \g/[\g,\g]$ denote the natural quotient map. Clearly, $\g/[\g,\g]$  has dimension two and is generated by $p(X_1)$ and $p(X_2)$. By Proposition \ref{P:nil}, $p(\h)\not=\g/[\g,\g]$. If $p(\h)=0$, then there would exist $X\in\hp$ with $p(X)=p(X_1)$, but then by Lemma \ref{L:Maxnilp},  $X$ would have maximal nilpotency, contrary to our assumption. So $p(\h)$ has dimension 1.
Let $Z_2\in\h$ such that $p(Z_2)$ spans $p(\h)$, and let $Z_1\in\hp$ such that $p(Z_1),p(Z_2)$ span $\g/[\g,\g]$. Since neither $Z_1$ nor $Z_2$ has  maximal nilpotency, by rescaling if necessary, we may write, by Lemma \ref{L:Maxnilp},  $Z_1=X_1-\frac1{\alpha}X_2+\sum_{i=3}^n c_i X_i$ and  $Z_2=X_2+\sum_{i=3}^n b_i X_i$, or alternately, $Z_2=X_1-\frac1{\alpha}X_2+\sum_{i=3}^n c_i X_i$ and  $Z_1=X_2+\sum_{i=3}^n b_i X_i$.
Notice that in either case, since $\hp$ is $\h$-invariant, $[Z_1,Z_2]\in \hp$, and $[Z_1,Z_2]=\pm X_3\pmod{\g_{4}}$. If $Z_2=X_1-\frac1{\alpha}X_2+\sum_{i=3}^n c_i X_i$, then by repeatedly bracketing $Z_1$ with $Z_2$, we obtain that $\hp$ contains linearly independent vectors of the form $X_i\pmod{\g_{i+1}}$ for $i=2,\dots,n-1$. So $\h$ has dimension at most 2, and we are done.
So we may assume that $Z_1=X_1-\frac1{\alpha}X_2+\sum_{i=3}^n c_i X_i$.  The idea is to adapt the proof of Lemma \ref{L:mn2}. Notice that while $Z_1$ does not have maximal nilpotency, one does have $\dim(\ker \ad(Z_1))=3$; see the proof of Lemma \ref{L:Maxnilp}. Consider the map $f:\h\to \hp$ defined by $f(Y)=\ad(Z_1)(Y)$. Note that  $Z_1\not\in \im f$ since  $Z_1$ does not belong to $[\g,\g]$ while the image of $f$ does. Thus $f$ is not surjective and so $\dim (\im f)\leq \dim (\hp)-1=\dim(\g)-\dim (\h) -1$. Thus
\[
2\dim (\h) \leq \dim(\ker f)+\dim(\g)-1.
\]
Notice also that as $\dim(\ker \ad(Z_1))=3$, and $Z_1\in \ker \ad(Z_1)$, we have $\dim (\ker f)\leq 2$. Hence
$2\dim (\h) \leq \dim(\g)+1$. Thus, as $\dim(\g)$ is even, $\dim (\h) \leq (\dim(\g))/2$, as required.
This completes the proof.
\end{proof}

\begin{proof}[{Proof of Theorem \ref{T:conv}}]
Up to Lie algebra isomorphism, there is only one filiform nilpotent Lie algebra in dimensions $3$ and $4$, namely the standard filiform algebras $L_3,L_4$ respectively. So we may assume that $\dim(\g)\geq 5$.

Let  $B=\{X_1,\dots,X_n\}$ be a basis satisfying the relations given in Theorem \ref{T:Ve}, and let $\alpha$ be as in the statement of Theorem \ref{T:Ve}.   Let  $E=\{E_1,\dots,E_n\}$ be the orthonormal  basis for $\g$ obtained from $B$ as described in Remark \ref{R:basis}. Choose a basis $\{Y_1,\dots,Y_{n-2}\}$ for $\h$ such that $\deg_E(Y_i)<\deg_E(Y_j)$ for all $1\leq i<j\leq n-2$.
Repeating the arguments of the proof of Lemma \ref{L:cod2}, we obtain a codimension one  ideal $\m$ of $\g$ that contains $\h$ as a codimension one subalgebra.  Let  $Z_1\in \g$ be a nonzero vector orthogonal to $\m$, and let  $Z_2\in \m$ be a nonzero vector orthogonal to $\h$.  Note that $\h$ is a codimension one totally geodesic subalgebra of $\m$, so by Proposition \ref{P:cod1}, $\m$ is the direct sum of Lie ideals, $\m=\h\oplus \Span(Z_2)$. In particular, $Z_2$ commutes with all the elements of $\h$, and so $\deg_E(Z_2)\geq 2$.

First  suppose that $\deg_E(Y_1)=1$. By Lemma \ref{L:mn1}, $\h$ has no elements of maximal nilpotency. So by Lemma \ref{L:Maxnilp},  $\g$ is irregular  and hence has even dimension $n\geq 6$, and $Y_1$ is a multiple of an element of the form $X_1+\sum_{i=2}^n a_i X_i$, where  $\alpha a_2=-1$. In particular, $Y_1$ commutes with $X_{n-1}$ and $X_n$, but $[Y_1,X_i]\not=0$ for $2\leq i\leq n-2$. Since $Z_2$ commutes with all the elements of $\h$, we have $[Z_2,Y_1]=0$. Thus $\deg_E(Z_2)\geq n-1$. Notice that $\deg_E(Y_2)>2$. Indeed, otherwise, $\h$ would project surjectively onto $\g/ [\g,\g]$, contradicting Proposition \ref{P:nil}. Thus  $\deg_E(Z_1)\leq 2$.   Hence,  since $\h$ has codimension 2, and since $3<n-1$ as $n\geq 6$, we have $\deg_E( Y_2)=3$. Repeatedly applying $\ad(Y_1)$, we obtain $\deg_E( Y_i)=i+1$ for all $2\leq i\leq n-2$. So $\h$ has no element of $E$-degree $n$. However, by Theorem \ref{T:Ve},  since $3<n-2$ as $n\geq 6$, $[X_3,X_{n-2}]=-\alpha X_n$, and hence $\deg_E([Y_2,Y_{n-3}])=n$. Thus $E_n\in \h$, which is a contradiction. Thus $\deg_E(Y_1)=1$ is impossible.

Now suppose that $\deg_E(Y_1)>2$. In this case we have $\deg_E(Z_1)=1$ and $\deg_E(Z_2)= 2$. Thus, since $\h$ has codimension 2, we have $\deg_E(Y_i)=i+2$, for all $1\leq i\leq n-2$. Consequently $E_i\in \h$ for all $3\leq i\leq n$. In particular, $E_3,E_4\in\h$. But then, as $n>4$,
\[
\langle[Z_1,E_3],E_4\rangle+\langle[Z_1,E_4],E_3\rangle=\langle[Z_1,E_3],E_4\rangle\not=0,
\]
in contradiction with Lemma \ref{L:basic}. So the case  $\deg_E(Y_1)>2$ is impossible.

It remains to treat the case where $\deg_E(Y_1)=2$. Here $\deg_E(Z_1)=1$.
Since $\h$ has codimension 2, there is precisely one integer $k$, with $3\leq k\leq n$ such that $\h$ does not have an element of $E$-degree $k$. Then, by change of basis if necessary, we may assume that the vectors $Y_i$ have the following form
\[
Y_i=
\begin{cases}
E_{i+1}+b_i E_k&:\ \text{for}\ 1\leq i\leq k-2\\
E_{i+2}&: \ \text{for}\  i\geq k-1,
\end{cases}
\]
where $b_i\in\R$ for $1\leq i\leq k-2$. Hence $\hp=\Span(E_1,E_k-\sum_{i=1}^{k-2} b_{i} E_{i+1})$.
First suppose that $\deg_E(Z_2)>3$.  Then we may set $Y_1=E_2$ and $Y_2=E_3$. Hence
\[
\langle [E_1,Y_1],Y_2\rangle+\langle [E_1,Y_2],Y_1\rangle=\langle [E_1,E_2],E_{3} ]\rangle\not=0,
\]
contradicting Lemma \ref{L:basic}. Thus $\deg_E(Z_2)>3$ is impossible.

Now suppose that $\deg_E(Z_2)=2$. So $\hp$ would project surjectively onto $\g/ [\g,\g]$.  By Proposition \ref{P:nil}, $\g$ has no proper subalgebra that contains  $\hp$. Hence by Lemma \ref{L:cod2}(b),
 $\h\subseteq \z(\h)$; that is, $\h$ is abelian. Thus, since $\m=\h\oplus \Span(Z_2)$, the ideal $\m$ is also abelian. Hence $\g$ is a standard filiform Lie algebra.

Finally, suppose that $\deg_E(Z_2)=3$. Let $\a$ denote the smallest subalgebra of $\g$ that contains $\hp$. Since $\deg_E(Z_2)=3$ and $\deg_E(Z_1)=1$, the algebra $\a$ contains elements of $E$-degree $i$ for all $3\leq i\leq n-1$ (and for all $3\leq i\leq n$  if $\ad(Z_1)$ has maximal nilpotency). In particular, $\Span(\a,E_n)$ has dimension $ n-1$. Hence $\h\cap\Span(\a,E_n)$ has codimension 3 in $\g$, and codimension one in $\h$. By Lemma \ref{L:cod2}(b),
 $\a\cap \h \subseteq \z(\h)$. So, since $E_n\in\z(\g)$, we have $\h\cap\Span(\a,E_n)\subseteq \z(\h)$. Since no Lie algebra of dimension greater than two has a centre of codimension one, we conclude that  $\h$ is abelian, and thus $\g$ is standard.
\end{proof}

\begin{remark}
In the proof of Theorem   \ref{T:conv}, in the notation used in the proof, we showed that the only possibilities are $\deg_E(Z_2)=2$ or $\deg_E(Z_2)=3$, and in these cases $\g$ is standard. In particular, $Z_1=E_1$ has maximal nilpotency. Moreover, if $k<n-1$, then $E_{n-1},E_n\in \h$, and so
\[
\langle [E_1,E_{n-1}],E_{n}\rangle+\langle [E_1,E_{n}],E_{n-1}\rangle=\langle [E_1,E_{n-1}],E_{n}\rangle\not=0,
\]
contradicting Lemma \ref{L:basic}. Thus  $k=n-1$ or $k=n$. First suppose that $k=n-1$. Then
\[
Y_i=
\begin{cases}
E_{i+1}+b_i E_{n-1}&:\ \text{for}\ 1\leq i\leq n-3\\
E_{n}&: \ \text{for}\  i=n-2,
\end{cases}
\]
Let $[E_1,E_i]=\sum_{j=i+1}^n c_{ij}E_j$, for $i=2,\dots,n-1$. For all $1\leq i\leq n-3$, Lemma \ref{L:basic} gives $\langle [E_1,Y_{i}],E_{n}\rangle=0$ and thus
\begin{equation}\label{E:cb1}
c_{(i+1)n}+b_ic_{(n-1)n}=0.
\end{equation}
Similarly, for all $1\leq i<j\leq n-3$, Lemma \ref{L:basic} gives
\[
\langle [E_1,Y_{i}],Y_{j}\rangle+\langle [E_1,Y_{j}],Y_{i}\rangle=0,
\]
 and thus
\begin{equation}\label{E:cb2}
c_{(i+1)(j+1)}+b_jc_{(i+1)(n-1)}+b_ic_{(j+1)(n-1)}=0.
\end{equation}
Notice that (\ref{E:cb1}) determines the $b_i$ in terms of the commutator coefficients $c_{ij}$, while (\ref{E:cb2}) gives quadratic conditions on the $c_{ij}$. One solution to this system of equations is given below in the proof of Theorem \ref{T:cd2f}. The case where $k=n$ can be treated in a similar fashion.
\end{remark}

\section{Examples}\label{S:ex}

\begin{proof}[{Proof of Proposition \ref{P:4D1Dtga}}]  Choose an orthonormal basis $\{X_1,X_2,X_3,X_4\}$ for $\g$ such that $[\g,\g]=\Span(X_3,X_4)$, and $X_4 \in \z(\g)$. As $\g$ is nilpotent,
    neither of the two vectors $[X_1,X_3], [X_2,X_3] \in [\g,\g]=\Span( X_3,X_4)$ can have a nonzero $X_3$-component, so we can choose orthonormal
    vectors     $X_1, X_2 \in [\g,\g]^\perp$ in such a way that $[X_1,X_3]= \gamma X_4$, $[X_2,X_3]=0$. Then $[X_1,X_2] =\alpha X_3+\beta X_4$, for some
    $\alpha,\beta\in \R$, so the relations have the required form, and moreover, $\alpha$ and $\gamma$ must be nonzero (as
    otherwise $\dim( [\g,\g]) < 2$). Changing the signs of $X_3$ and $X_4$, if necessary, we get $\alpha,\gamma>0$.

 (a)    Clearly, $[\g,\g]=\Span( X_3,X_4)$ and $\z(\g)=\Span( X_4)$. So by Lemma  \ref{L:center}, $X_4$ and the nonzero elements of $\Span(X_1,X_2)$
    are all geodesics.
    Let $X\in\g$ be nonzero, and write $X=\sum_{i=1}^4a_iX_i$. Then $X$ is a geodesic if and only if $\nabla_X X=0$; that is, if
    $\langle [X,X_i],X\rangle=0$, for all $i=1,\dots, 4$ (see Remark~\ref{R:geo}). First suppose that $a_1\not=0$. Note that
    $\langle [X,X_3],X\rangle=\gamma a_1a_4$, which is zero only if $a_4=0$. Then $\langle [X,X_2],X\rangle=\alpha a_1a_3$, which is zero only if
    $a_3=0$. So if $a_1\not=0$, we have $ X\in\Span( X_1,X_2)$.
    Now suppose that $a_1=0$. The conditions $\langle [X,X_i],X\rangle=0$ are trivial for $i=2,3,4$. The condition $\langle [X,X_1],X\rangle=0$ is
    equivalent to $a_2(\alpha a_3 +\beta a_4) +\gamma a_3a_4=0$, as required.

(b) The subalgebra $\h$ is nilpotent, hence abelian, so every nonzero vector in $\h$ is a geodesic in $\h$, and hence in $\g$, by \eqref{L:TotGeod-cond}
of Lemma~\ref{L:basic}. It follows that $\h$ must be a two-dimensional abelian subspace in the semi-algebraic geodesic variety from (b). If
$\beta \ne 0$, the only two-dimensional subspace of that variety is $\Span( X_1,X_2)$, which is not abelian (and not a subalgebra). If $\beta=0$,
the geodesic variety is the union of three subspaces: $\Span( X_1,X_2), \; \Span(X_2,X_4)$, and $\h=\Span(\gamma X_2-\alpha X_4,X_3)$.
The former one is again not a subalgebra, while the other two are totally geodesic subalgebras, as directly follows from \eqref{L:TotGeod-cond}.
\end{proof}

\begin{remark}\label{R:C=L}
Let $C=\{c_2,\dots,c_{n-1}\}$ be an ordered list of nonzero reals, and let $L_C$ denote the  nilpotent filiform metric Lie algebra $\g$, with orthonormal basis $\{X_1,\dots,X_n\}$, whose only nonzero relations are $[X_1,X_i]=c_iX_{i+1}$, for $i=2,\dots,n-1$. Clearly, as a Lie algebra, $L_C$ is isomorphic to the standard filiform algebra $L_n$. We claim that if $L_C$ possesses a totally geodesic  subalgebra of dimension $k$, then so too does the metric Lie algebra $L_n$. Indeed, we will use the notation $\{X_1,\dots,X_n\}$ for the bases in both $L_C$ and $L_n$, and  we denote the inner products in the two algebras  by the same notation $\langle \cdot,\cdot\rangle$; that is, we think of the inner product spaces as being identical, and only the Lie bracket as being different. We will denote the bracket in $L_n$ by $[ \cdot,\cdot]$, and the bracket in $L_C$ by $[ \cdot,\cdot]_C$. Define constants $f_2,\dots,f_n$ recursively by posing $f_2=1$ and $f_if_{i+1}=c_i$, for $i=2,\dots,n-1$, and consider the vector space isomorphism $\phi: L_C \to L_n$ defined by $X_1\mapsto X_1, X_i\mapsto f_iX_i$ for $i\geq2$. In general, $\phi$ is not a Lie algebra homomorphism, but we claim that it preserves totally geodesic subalgebras, and in particular, the map $\phi$ is ``projective'', in the sense that it preserves geodesics. Indeed, suppose that $L_C$ has a totally geodesic  subalgebra $\h$. Note that for both $L_n$ and $L_C$, we have $X_1\in \hp$, by Remark \ref{R:reg}, so $\h$ has a basis
$\{Y_1,\dots,Y_k\}$, where for $i=1,\dots,k$,
\[
Y_i=\sum_{j=2}^n a_{i,j} X_j,
\]
for some constants $a_{i,j}\in\mathbb R$. The image $\phi(\h)$ is contained in the abelian ideal\break $\Span(X_2, X_3,\dots , X_n)$, so it is a subalgebra; it has basis $\{Z_1,\dots,Z_k\}$, where
\[
Z_i=\phi(Y_i)=\sum_{j=2}^n a_{i,j}f_j X_j,
\]
 for $i=1,\dots,k$. Since $\h$ is totally geodesic, we have
 $ \langle [X_1,Y_i]_C,Y_l\rangle+\langle [X_1,Y_l]_C,Y_i\rangle=0$,
 for all $1\leq i,l\leq k$. That is, setting $a_{i,(n+1)}=0$, we have
  \[
 \sum_{j=2}^n c_j a_{i,j} a_{l,(j+1)} +c_j a_{l,j} a_{i,(j+1)}=0,
  \]
or equivalently
 \[
 \sum_{j=2}^n a_{i,j} f_j a_{l,(j+1)} f_{j+1} +a_{l,j} f_j a_{i,(j+1)}f_{j+1} =0.
  \]
 Hence
$ \langle [X_1,Z_i],Z_l\rangle+\langle [X_1,Z_l],Z_i\rangle=0$,
for all $1\leq i,l\leq k$, and so the algebra $\phi(\h)$ is totally geodesic  in $L_n$, as claimed.
\end{remark}

\begin{proof}[{Proof of Theorem \ref{T:cd2f}}]
We define an orthonormal basis $\{E_1,\dots,E_n\}$ as follows: we set $E_1=X_1, E_{n}=X_n$ and for each $i\in \{2,3,\dots,n-1\}$, we pose
\[
E_i=
\sum_{j=0} ^{\lfloor\frac{n-1-i}{2}\rfloor} \begin{pmatrix}n- 1-i-j\\
j
\end{pmatrix}X_{i+2j}.
\]
Notice that the basis has been chosen so that
\[
[E_1,E_i]=E_{i+1}+E_{i+3}+E_{i+5}+\dots,
\]
for all $i\geq 2$. Indeed, for all $2\leq i\leq n-3$,
\begin{align*}
E_i-E_{i+2}
&=\sum_{j=0} ^{\lfloor\frac{n-1-i}{2}\rfloor} \begin{pmatrix}n- 1-i-j\\
j
\end{pmatrix}X_{i+2j}- \sum_{j=0} ^{\lfloor\frac{n-3-i}{2}\rfloor} \begin{pmatrix}n- 3-i-j\\
j
\end{pmatrix}X_{i+2+2j}\\
&=\sum_{j=0} ^{\lfloor\frac{n-1-i}{2}\rfloor} \begin{pmatrix}n- 1-i-j\\
j
\end{pmatrix}X_{i+2j}- \sum_{j=1} ^{\lfloor\frac{n-1-i}{2}\rfloor} \begin{pmatrix}n- 2-i-j\\
j-1
\end{pmatrix}X_{i+2j}\\
&=\sum_{j=0} ^{\lfloor\frac{n-1-i}{2}\rfloor} \left[\begin{pmatrix}n- 1-i-j\\
j
\end{pmatrix}- \begin{pmatrix}n- 2-i-j\\
j-1
\end{pmatrix}\right]X_{i+2j}\\
&=\sum_{j=0} ^{\lfloor\frac{n-1-i}{2}\rfloor} \begin{pmatrix}n- 2-i-j\\
j
\end{pmatrix}X_{i+2j},
\end{align*}
so
\begin{align*}
[E_1,E_i-E_{i+2}]
&=\sum_{j=0} ^{\lfloor\frac{n-1-i}{2}\rfloor} \begin{pmatrix}n- 2-i-j\\
j
\end{pmatrix}X_{i+1+2j}\\
&=\sum_{j=0} ^{\lfloor\frac{n-1-(i+1)}{2}\rfloor} \begin{pmatrix}n- 1-(i+1)-j\\
j
\end{pmatrix}X_{i+1+2j},
\end{align*}
since if $\lfloor\frac{n-1-(i+1)}{2}\rfloor<\lfloor\frac{n-1-i}{2}\rfloor$, then $\lfloor\frac{n-1-i}{2}\rfloor=\frac{n-1-i}{2}$, and so for $j=\lfloor\frac{n-1-i}{2}\rfloor$, we have
\[
\begin{pmatrix}n- 2-i-j\\
j
\end{pmatrix}=\begin{pmatrix}(n-3-i)/2\\
(n-1-i)/2
\end{pmatrix}=0.
\]
Thus
$[E_1,E_i-E_{i+2}]=E_{i+1}$ for all $2\leq i\leq n-3$. By definition, $E_{n}=X_{n},E_{n-1}=X_{n-1}$ and so $[E_1,E_n]=0$ and $[E_1,E_{n-1}]=E_n$. Hence, by induction, $[E_1,E_i]=E_{i+1}+E_{i+3}+E_{i+5}+\dots$, as claimed.

Let $\h$ denote the codimension two subalgebra   $\Span(Y_2,Y_3,\dots,Y_{n-2},Y_{n})$, where  for $i\in\{2,3,\dots,n-2,n\}$,
\[
Y_{i}=
\begin{cases}
E_{i} &:\ \text{if}\ n-i\ \text{is even},\\
E_{i} -E_{n-1} &:\ \text{otherwise}.
\end{cases}
\]
Notice that the orthogonal complement $\hp$ to $\h$ is spanned by the vectors
\[
Z_1:=E_1\ \text{and}\ Z_2:=\sum_{\substack{1\leq j\leq n-2\\ j\ \text{odd}}} E_{n-j}.
\]
Clearly, $Z_2$ commutes with each of the vectors $Y_i$, so to show that $\h$ is totally geodesic, we must show that
\begin{equation}\label{E:cond}
\langle [Z_1,Y_i],Y_j\rangle + \langle [Z_1,Y_j],Y_i\rangle =0,
\end{equation}
for all $i,j\in\{2,3,\dots,n-2,n\}$. If $n-i,n-j$ are both even, then
\[
\langle [Z_1,Y_i],Y_j\rangle = \langle [E_1,E_i],E_j\rangle =\langle E_{i+1}+E_{i+3}+\dots+E_{n-1} ,E_j\rangle =0,
\]
and similarly $ \langle [Z_1,Y_j],Y_i\rangle=0$; so (\ref{E:cond}) holds in this case.
Notice that if $n-i$ is odd, then $[E_1,E_i] = E_{i+1}+E_{i+3}+\dots +E_{n}$. So if $n-i,n-j$ are both odd, then
\begin{align*}
\langle [Z_1,Y_i],Y_j\rangle &= \langle [E_1,E_i-E_{n-1}],E_j-E_{n-1}\rangle \\
&=\langle E_{i+1}+E_{i+3}+\dots +E_{n-2},E_j-E_{n-1}\rangle =0,
\end{align*}
\and similarly $ \langle [Z_1,Y_j],Y_i\rangle=0$; so (\ref{E:cond}) again holds.
It remains to consider the case where $n-i$ is odd and $n-j$ is even.  If $i>j$, then
\[
\langle [Z_1,Y_i],Y_j\rangle  = \langle [E_1,E_i-E_{n-1}],E_j\rangle
=\langle E_{i+1}+E_{i+3}+\dots +E_{n-2},E_j\rangle =0,
\]
and
\begin{align*}
\langle [Z_1,Y_j],Y_i\rangle  &= \langle [E_1,E_j],E_i-E_{n-1}\rangle\\
&=\langle E_{j+1}+E_{j+3}+\dots +E_{n-1},E_i-E_{n-1}\rangle\\
&=\langle E_{i}+E_{n-1},E_i-E_{n-1}\rangle =0,
\end{align*}
so (\ref{E:cond}) again holds.  Finally, if $i<j$,
\begin{align*}
\langle [Z_1,Y_i],Y_j\rangle&+\langle [Z_1,Y_j],Y_i\rangle  = \langle [E_1,E_i-E_{n-1}],E_j\rangle +\langle [E_1,E_j],E_i-E_{n-1}\rangle\\
&=\langle E_{i+1}+E_{i+3}+\dots +E_{n-2},E_j\rangle \\
&\quad  +\langle E_{j+1}+E_{j+3}+\dots +E_{n-1},E_i-E_{n-1}\rangle \\
&=1-1=0.
\end{align*}
Hence (\ref{E:cond}) holds in all cases.
\end{proof}

\begin{proof}[{Proof of Theorem \ref{T:dim6}}]
Fix an inner product on $\g$ and suppose that $\h$ is a totally geodesic proper subalgebra of $\g$. Notice that $\g$ is not isomorphic to $L_6$ since its derived algebra is not abelian. So by Proposition \ref{P:cod1} and Theorem \ref{T:conv}, $\h$ has codimension at least three. Suppose that $\h$ has codimension three. For each $k=1,\dots,6$, let $\g_k=\Span( X_k,\dots,X_6)$.   The basis $\{X_1,\dots,X_6\}$ satisfies the condition of  Theorem \ref{T:Ve} with $\alpha =0$; that is, $\g$ is regular.
 Let  $E=\{E_1,\dots,E_6\}$ be the orthonormal  basis for $\g$ obtained from the basis $B=\{X_1,\dots,X_6\}$ as described in Remark \ref{R:basis}.
Notice that by construction, $[E_1,E_i]\in\g_{i+1}=\Span(X_{i+1},\dots,X_6)= \Span(E_{i+1},\dots,E_6)$, and $\langle[E_1,E_i],E_{i+1}\rangle  \not=0$, for each $i=2,\dots,5$, and $\langle[E_2,E_3],E_6\rangle  \not=0$. Moreover, $[E_i,E_j]\in\g_{i+j}$ for $i,j$, and $[E_2,E_4]=0$ and $[E_i,E_j]=0$ for $i+j>6$. By Remark \ref{R:reg}, we have $E_1\in\hp$. Let $\hp=\Span(E_1,Z_1,Z_2)$, where $2\leq \deg_E(Z_1)<\deg_E(Z_2)$.

We will repeatedly use the following fact: it is impossible that $E_i,E_{i+1}\in\h$, for any $i=2,\dots,5$.  Indeed, otherwise
\[
\langle[E_1,E_i],E_{i+1}\rangle+\langle[E_1,E_{i+1}],E_i\rangle=\langle[E_1,E_i],E_{i+1}\rangle\not=0,
\]
which would contradict Lemma \ref{L:basic}.

First notice that $\deg_E(Z_1)< 4$, since otherwise  $E_2,E_3\in\h$, which is impossible.\break
Suppose that $\deg_E(Z_1)=3$. Then  $E_2\in\h$ and we may take $Z_1=E_3+a_4E_4 +a_5E_5+a_6E_6$, for some $a_4,a_5,a_6\in\R$. Consider an arbitrary nonzero element $Y\in\h$. Then Lemma \ref{L:basic} gives
\begin{equation*}
0=\langle [Z_1,E_2],Y\rangle +\langle[Z_1,Y],E_2\rangle=\langle [Z_1,E_2],Y\rangle =-\langle[E_2,E_3],Y\rangle.
\end{equation*}
As $[E_2,E_3]$ is a nonzero multiply of $E_6$, we have that $E_6$ is orthogonal to $\h$. Hence $E_6\in\h^\perp$.
Thus, without loss of generality, we may take $Z_1=E_3+a_4E_4 +a_5E_5$ and $Z_2=E_6$. Then $a_4E_3-E_4,a_5E_3-E_5\in\h$.
Since $\h$ is a subalgebra and $E_6\in\hp$ we have
\begin{align*}
0&=\langle[E_2,a_4E_3-E_4],E_6\rangle=a_4\langle[E_2,E_3],E_6\rangle\implies a_4=0,\\
0&=\langle[E_2,a_5E_3-E_5],E_6\rangle=a_5\langle[E_2,E_3],E_6\rangle\implies a_5=0.
\end{align*}
But this would give $E_4,E_5\in\h$, which is impossible. So $\deg_E(Z_1)\not=3$.

Finally, suppose that $\deg_E(Z_1)=2$. We may take $Z_1=E_2+a_3E_3+a_4E_4 +a_5E_5+a_6E_6$, for some $a_3,\dots,a_6\in\R$.
Let us first assume that $\deg_E(Z_2)\geq 4$. Then $a_3E_2-E_3\in\h$.  So Lemma \ref{L:basic} gives
\[
0=\langle[E_1,a_3E_2-E_3],a_3E_2-E_3\rangle=-a_3\langle[E_1,E_2],E_3\rangle,
\]
and so $a_3=0$. Thus $E_3\in\h$. Arguing as before, if
 $Y=b_2E_2+b_4E_4 +b_5E_5+b_6E_6\in\h $, then
Lemma \ref{L:basic} gives
\[
0=\langle[Z_1,E_3],Y\rangle+\langle[Z_1,Y],E_3\rangle=\langle[Z_1,E_3],Y\rangle=b_6\langle[E_2,E_3],E_6\rangle,
\]
and so $b_6=0$. Hence $E_6\in\hp$. Thus  $\hp=\Span(E_1,E_2+a_4E_4 +a_5E_5,E_6)$ and $\h=\Span(E_3,a_4E_2-E_4 ,a_5E_2-E_5)$. However $\h$ is a subalgebra and $[E_2,E_3]$ is a nonzero multiple of $E_6$. Hence, as $E_6\not \in\h$, we necessarily have $a_4=a_5=0$. But this gives $E_4,E_5\in\h$, which is impossible. So $\deg_E(Z_2)=3$.
Without loss of generality, we may take
\begin{align*}
Z_1&=E_2+a_4E_4 +a_5E_5+a_6E_6\\
Z_2&=E_3+c_4E_4 +c_5E_5+c_6E_6.
\end{align*}
Consider the orthogonal complement $\g_2$ to $\Span(E_1)$.  Note that $\h\subseteq \g_2$ and $Z_1,Z_2\in\g_2$. It follows that $\h$ is a codimension 2 totally geodesic subalgebra of $\g_2$. Hence by Lemma \ref{L:cod2}(a)(ii), we have $[Z_1,Z_2]\in\h$. Therefore, $E_6\in\h$.
Suppose that $Y=b_2E_2+b_3E_3+b_4E_4+b_5E_5\in\h$, for some $b_2,\dots,b_5\in\R$.
Then Lemma \ref{L:basic} gives
\begin{align*}
0&=\langle[Z_1,Y],E_6\rangle+\langle[Z_1,E_6],Y\rangle=b_3\langle[E_2,E_3],E_6\rangle\implies b_3=0,\ \text{and}\\
0&=\langle[Z_2,Y],E_6\rangle+\langle[Z_2,E_6],Y\rangle=-b_2\langle[E_2,E_3],E_6\rangle\implies b_2=0.
\end{align*}
Hence $E_4,E_5\in\h$, which is impossible. This completes the proof.
\end{proof}

\section*{Appendix}

Mich\`ele Vergne's  classification of filiform Lie algebras is well known to experts, but it is not explicitly stated in \cite{Ve}. Instead, filiform graded Lie algebras are classified in \cite{Ve} and from this, the classification of filiform Lie algebras can be deduced.  It is possibly for this reason that the classification is perhaps not as well known as it merits. For completeness, we adapt the ideas in \cite{Ve}  to provide a direct, elementary proof of Theorem \ref{T:Ve}. The key step is taken directly from \cite[Prop.~5]{Ve}.

\begin{proof}[{Proof of Theorem \ref{T:Ve}}]
Suppose that $\g$ is a filiform Lie algebra of dimension $n$ and that $X_1\in \g$ has $\ad^{n-2}(X_1)\not=0$. Choose a vector $X_2$ with $\ad^{n-2}(X_1)(X_2)\not=0$. Note that as $\g$ is nilpotent, $\ad^{n-1}(X_1)(X_2)=0$. Define $X_{i+2}=\ad^{i}(X_1)(X_2)$, for $i\geq 1$; so $[X_1,X_i]=X_{i+1}$ for all $i\geq 2$. Moreover, the elements $X_1,\dots,X_n$ are linearly independent. Indeed, first notice that  if $X_2,\dots,X_n$ are not linearly independent, then $X_k,\dots,X_n$ are linearly independent for some minimal $k>2$. Thus $X_{k-1}=a_kX_k+\dots+a_nX_n$ for some constants $a_i$. Then applying $\ad(X_1)$ gives
\[
X_k=a_kX_{k+1}+\dots+a_{n-1}X_n,
\]
which is impossible as $X_k,\dots,X_n$ are linearly independent. Thus $X_2,\dots,X_n$ are {li\-nearly} independent. We now show that $X_1$ does not belong to $ \Span( X_2,\dots,X_n)$. Indeed, otherwise  $X_1=a_2X_2+\dots+a_nX_n$ for some constants $a_i$, and thus
\[
0=[X_1,X_1]=[X_1,a_2X_2+\dots+a_nX_n]=a_2X_3+\dots+a_{n-1}X_n,\]
which gives $a_2=\dots=a_{n-1}=0$. Then we would have $X_1=a_nX_n$ for some nonzero $a_n$. But then $\ad(X_{n-1})(X_1)=-[X_1,X_{n-1}]=-X_n=-\frac1{a_n}X_1$, which is impossible as $\ad(X_{n-1})$ is nilpotent, since $\g$ is nilpotent. Thus  $X_1,\dots,X_n$ forms a basis for $\g$, and by construction, $[X_1,X_i]=X_{i+1}$ for all $ i\geq 2$.

\begin{lemma}\label{L:br}
If  $\g$ is filiform nilpotent, then for all bases $\{X_1,\dots,X_n\}$ with $[X_1,X_i]=X_{i+1}$ for all $ i\geq 2$, one has $[X_i,X_j]\in \g_{i+j-1}$, for all $i,j$.\end{lemma}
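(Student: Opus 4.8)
The plan is to identify the flag $\g=\g_1\supseteq\g_2\supseteq\g_3\supseteq\cdots$, away from its top term, with the lower central series of $\g$, and then to read the lemma off the standard commutator estimate for that series.

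Write $C^{0}=\g$ and $C^{k}=[\g,C^{k-1}]$ for $k\ge 1$. Since $[X_1,X_i]=X_{i+1}$ for all $i\ge 2$, we have $X_m=(\ad X_1)^{m-2}X_2$ for $m\ge 2$, so an immediate induction on $m$ gives $X_m\in C^{m-2}$: indeed $X_2\in C^0=\g$, and $X_{m+1}=[X_1,X_m]\in[\g,C^{m-2}]=C^{m-1}$. Hence $\g_{k+2}=\Span(X_{k+2},\dots,X_n)\subseteq C^{k}$ for every $k\ge 0$.

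The main step is the reverse inclusion $C^{k}\subseteq\g_{k+2}$ for $k\ge 1$, and this is where the hypotheses on $\g$ are used, via a dimension count. Since $X_3=[X_1,X_2]\ne 0$, the algebra $\g$ is non-abelian; a non-abelian nilpotent Lie algebra has $\dim\g/[\g,\g]\ge 2$ (if it were $\le 1$ one gets $[\g,\g]=[\g,[\g,\g]]$, hence $[\g,\g]=0$ by nilpotency), so $\dim C^{1}\le n-2=\dim\g_3$, and together with $\g_3\subseteq C^{1}$ this forces $C^{1}=\g_3$. One then inducts: if $C^{k}=\g_{k+2}$ with $k+2\le n$, then $C^{k}\ne 0$, so by nilpotency $C^{k+1}\subsetneq C^{k}$, whence $\dim C^{k+1}\le n-k-2=\dim\g_{k+3}$; combined with $\g_{k+3}\subseteq C^{k+1}$ this gives $C^{k+1}=\g_{k+3}$. (When $k+2>n$ both sides are $0$.) So $C^{k}=\g_{k+2}$ for all $k\ge 1$.

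Finally, using the standard inclusion $[C^{a},C^{b}]\subseteq C^{a+b+1}$ (a routine induction on $a+b$ via the Jacobi identity), for $i,j\ge 2$ we obtain
\[
[X_i,X_j]\in[C^{i-2},C^{j-2}]\subseteq C^{i+j-3}=\g_{i+j-1},
\]
the last equality valid because $i+j-3\ge 1$. The remaining cases are trivial: if $i=1$ then $[X_1,X_j]=X_{j+1}\in\g_{j+1}\subseteq\g_{j}=\g_{i+j-1}$ (and symmetrically for $j=1$), and if $n\le 2$ then $\g$ is abelian. I expect the only genuinely nontrivial point to be the identification $C^{k}=\g_{k+2}$; everything else is bookkeeping with the lower central series.
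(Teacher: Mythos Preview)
Your proof is correct, but it takes a genuinely different route from the paper's. The paper argues in two stages by direct induction: first it shows $[X_i,X_j]\in\g_{\max\{i,j\}+1}$ by induction on $n$, passing to the quotient $\g/\z(\g)$ and using that $\z(\g)=\Span(X_n)$; then it upgrades this to $[X_i,X_j]\in\g_{i+j-1}$ by induction on $i+j$, using the Jacobi identity to show $[X_i,X_j]+[X_{i-1},X_{j+1}]\in\g_{i+j-1}$ and telescoping down to $[X_2,X_{i+j-2}]$, which lies in $\g_{i+j-1}$ by the first stage. Your argument instead identifies the flag $\g_{k+2}$ with the lower central series term $C^k$ via a dimension count (using that in a nilpotent algebra the lower central series drops in dimension at each nonzero step, and that $\dim\g/[\g,\g]\ge 2$), and then invokes the standard inclusion $[C^a,C^b]\subseteq C^{a+b+1}$. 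Your approach is more structural and arguably cleaner: the identification $C^k=\g_{k+2}$ is itself a useful characterisation of the filiform condition, and once you have it the lemma is immediate from general facts about the lower central series. The paper's approach is more self-contained, not presupposing the commutator estimate for the lower central series (though of course that estimate is proved by exactly the kind of Jacobi manipulation the paper carries out).
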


\def\proofname{Proof}
\begin{proof}
First we will show that $[X_i,X_j]\in \g_{\max\{i,j\}+1}$, for all $i,j$.
This can be seen by induction on $n$. It is true trivially for $n\leq 2$. For $n>2$, let $\{X_1,\dots,X_n\}$ be a basis for $\g$ with the given relations. Notice that the centre $\z(\g)$ of $\g$ is $\Span( X_n)$. Indeed, it is clear from the given relations that $\z(\g)\subseteq \Span( X_n)$, and since $\g$ is nilpotent, $\z(\g)\not=0$; so $\z(\g)=\Span( X_n)$. Let $\g'=\g/\z(\g)$ and for each $i=1,\dots,n-1$, let $X'_i$ denote the image of $X_i$ in $\g'$. Notice that $[X'_1,X'_i]=X'_{i+1}$ for all $ 2\leq i\leq n-2$, and $[X'_1,X'_{n-1}]=0$. So by the inductive hypothesis, $[X'_i,X'_j]\in \g'_{\max\{i,j\}+1}$, for all $i,j\leq n-1$. Hence, as $\z(\g)=\Span( X_n)$, we have $[X_i,X_j]\in\g_{\max\{i,j\}+1}$, for all $i,j\leq n$, as claimed.

We prove that $[X_i,X_j]\in \g_{i+j-1}$ for all $i,j$. We use induction on $i+j$. The claim is obviously true for $i+j\leq 3$. Suppose $2<i<j$. The Jacobi identity gives
\begin{align*}
[X_i,X_{j}]+[X_{i-1},X_{j+1}]&=[[X_1,X_{i-1}],X_{j}]+[X_{i-1},[X_1,X_{j}]]\\
&=[X_1,[X_{i-1},X_{j}]].
\end{align*}
By the inductive hypothesis, $[X_{i-1},X_{j}]\in\g_{i+j-2}$ and so $[X_1,[X_{i-1},X_{j}]]\in\g_{i+j-1}$. Hence
$[X_i,X_{j}]+[X_{i-1},X_{j+1}]\in\g_{i+j-1}$. It follows that
\[
[X_i,X_{j}]=(-1)^i[X_{2},X_{i+j-2}] \pmod{\g_{i+j-1}},
\]
for all $2\leq i<j$. But from above, $[X_{2},X_{i+j-2}]\in\g_{\max\{2,i+j-2\}+1}=\g_{i+j-1}$. Hence $[X_i,X_{j}]\in \g_{i+j-1}$, as required.
\end{proof}

So far, we have $[X_1,X_j]\in \g_{j+1}$ for all $j$, and from the above lemma, we have $[X_2,X_3]\in\g_{4}$ and so $[X_2,X_3]=b X_{4} \pmod{\g_{5}}$ for some $b\in\R$. Replace $X_2$ by the vector $X_2-bX_1$; by abuse of language we continue to call this $X_2$. Notice that $X_1,\dots,X_n$ continue to form a basis, we still have the relations $[X_1,X_i]=X_{i+1}$ for all $ i\geq 2$ and moreover we still have $[X_i,X_j]\in \g_{\max\{i,j\}+1}$, for all $i,j\leq n$. But now we also have $[X_2,X_3]\in \g_{5}$.

The completion of the proof is by induction on $n$. Notice that if $n\leq 4$, the theorem follows from the properties we have established so far (with $\alpha=0$ in the case $n=4$). So assume that $n>4$ and assume that the properties hold for $X_1,\dots,X_{n-1}$. We first suppose that $n$ is even. As in the proof of Lemma \ref{L:br}, let $\g'=\g/\Span( X_n)$ and for each $i=1,\dots,n-1$, let $X'_i$ denote the image of $X_i$ in $\g'$. By the inductive hypothesis, for all $i+j\leq n$ we have $[X'_i,X'_j]\in \g'_{i+j}$ and so $[X_i,X_j]\in \g_{i+j}$. If $i+j\geq n+2$, then by Lemma \ref{L:br}, $[X_i,X_j]\in \g_{i+j-1}$, and so $[X_i,X_j]\in \g_{i+j}$, since $\g_{i+j}=\g_{i+j-1}=0$. It remains to treat the case where $i+j=n+1$.
We must show that there exists $\alpha\in\R$ with $[X_i,X_{n-i+1}]=(-1)^i \alpha X_n$,  for all $2\leq i\leq n-1$. Suppose that $2<i<n-i+1$. Arguing as before, the Jacobi identity gives
\begin{align*}
[X_i,X_{n-i+1}]+[X_{i-1},X_{n-i+2}]&=[[X_1,X_{i-1}],X_{n-i+1}]\\
&\quad+[X_{i-1},[X_1,X_{n-i+1}]]\\
&=[X_1,[X_{i-1},X_{n-i+1}]].
\end{align*}
 By the inductive hypothesis, $[X'_{i-1},X'_{n-i+1}]=0$, so $[X_{i-1},X_{n-i+1}]\in \g_{n}$, and therefore $[X_1,[X_{i-1},X_{n-i+1}]]=0$. Thus,
$[X_i,X_{n-i+1}]=-[X_{i-1},X_{n-i+2}]$. Hence, defining $\alpha$ by the condition  $[X_2,X_{n-1}]=\alpha X_n$, we have $[X_i,X_{n-i+1}]=(-1)^i\alpha X_n$, for all $2\leq i\leq n-1$, as required.

Now suppose that $n$ is odd. As in the even case, $[X_i,X_j]\in \g_{i+j}$ when $i+j\geq n+2$. By the inductive hypothesis, we have $[X_i,X_j]\in \g_{i+j}$, for all $i+j\leq n-1$ and there exist $\alpha,\beta_i\in\R$ with $[X_i,X_{n-i}]=(-1)^i \alpha X_{n-1}+\beta_iX_n$,  for all $2\leq i\leq n-2$. By Lemma \ref{L:br}, there exist $\alpha_i\in\R$ such that
$[X_i,X_{n-i+1}]=\alpha_i X_{n}$,  for all $2\leq i\leq n-2$. We must show that $\alpha=0$ and $\alpha_i=0$ for all $2\leq i\leq n-2$.
For $2<i<n-i+1$, the Jacobi identity gives
\begin{align*}
[X_i,X_{n-i+1}]+[X_{i-1},X_{n-i+2}]&=[[X_1,X_{i-1}],X_{n-i+1}]\\
&\quad +[X_{i-1},[X_1,X_{n-i+1}]]\\
&=[X_1,[X_{i-1},X_{n-i+1}]]\\
&=[X_1,(-1)^{i-1} \alpha X_{n-1}+\beta_{i-1} X_n]\\
&=(-1)^{i-1} \alpha X_{n}.
\end{align*}
Thus $\alpha_i  +\alpha_{i-1}=(-1)^{i-1} \alpha$ and hence
\begin{equation}\label{E:alphai}
\alpha_i=(-1)^{i} (\alpha_2+(2-i)\alpha).
\end{equation}
Notice that as $n$ is odd, $X_i=X_{n-i+1}$ when $i=\frac{n+1}2$; so $\alpha_{\frac{n+1}2}=0$. Thus
\begin{equation}\label{E:alphaalpha}
0=\alpha_2+\frac{3-n}2\alpha.
\end{equation}
Note that
\begin{equation}\label{E:alpha}
[X_2,[X_{\frac{n-1}2},X_{\frac{n+1}2}]]=[X_2,(-1)^{\frac{n-1}2} \alpha X_{n-1}]=(-1)^{\frac{n-1}2} \alpha \alpha_2X_{n}.
\end{equation}
The Jacobi identity also gives
\[
[X_2,[X_{\frac{n-1}2},X_{\frac{n+1}2}]]
=[[X_2,X_{\frac{n-1}2}],X_{\frac{n+1}2}]+[X_{\frac{n-1}2},[X_2,X_{\frac{n+1}2}]].
\]
Unless $2+{\frac{n-1}2}\in\{n,n+1\}$, we have  $[X_2,X_{\frac{n-1}2}]\in\g_{\frac{n+3}2}$ and thus $[[X_2,X_{\frac{n-1}2}],X_{\frac{n+1}2}]=0$, by Lemma \ref{L:br}. Similarly,  unless $2+{\frac{n+1}2}\in\{n,n+1\}$, we have $[X_2,X_{\frac{n+1}2}]\in\g_{\frac{n+5}2}$ and thus $[X_{\frac{n-1}2},[X_2,X_{\frac{n+1}2}]]=0$. Hence $[X_2,[X_{\frac{n-1}2},X_{\frac{n+1}2}]]=0$ except possibly in the following four cases:
\begin{enumerate}[\rm (a)]
\item $2+{\frac{n-1}2}=n$; that is, $n=3$.
\item $2+{\frac{n-1}2}=n+1$; that is, $n=1$.
\item $2+{\frac{n+1}2}=n$; that is, $n=5$.
\item $2+{\frac{n+1}2}=n+1$; that is, $n=3$.
\end{enumerate}
But cases (a), (b) and (d) do not occur, since we have assumed $n>4$. In case (c), we have $n=5$ and so
\[
[X_2,[X_{\frac{n-1}2},X_{\frac{n+1}2}]]=[X_2,[X_2,X_3]].
\]
We have $[X_2,X_3]\in\g_5$ and so $[X_2,X_3]=\gamma X_5$ for some $\gamma\in\R$. But then, by Lemma \ref{L:br},
$[X_2,[X_2,X_3]]=\gamma [X_2,X_5]=0$, since $X_5\in\z(\g)$. Consequently $[X_2,[X_{\frac{n-1}2},X_{\frac{n+1}2}]]=0$ in all cases. So, by
Equation (\ref{E:alpha}), $\alpha \alpha_2=0$. Thus, by Equation (\ref{E:alphaalpha}), $\alpha =\alpha_2=0$. Hence, by Equation (\ref{E:alphai}), $\alpha_i=0$  for all $2\leq i\leq n-2$. This completes the proof of the theorem.
\end{proof}

\bibliographystyle{amsplain}

\end{document}